\documentclass[a4paper,10pt, reqno]{amsart}

\usepackage[main=english]{babel}

\usepackage{amsmath,amssymb,latexsym, amsfonts}
\usepackage{verbatim}
\usepackage{newcent}
\usepackage{mathbbol}
\usepackage[colorlinks=true, allcolors=black]{hyperref}

\usepackage{mathabx}
\usepackage{bbm}
\usepackage{pifont}
\usepackage{manfnt}

\usepackage[dvipsnames]{xcolor}

\usepackage{tikz-cd}
\usepackage{mathtools}

\usepackage{upgreek}  

\usepackage{tikz}
\usetikzlibrary{backgrounds}

\usepackage[all,ps]{xy}
\usepackage{color}

\usepackage{stmaryrd}

\usepackage{capt-of}

\makeatletter
\@namedef{subjclassname@2020}{%
  \textup{2020} Mathematics Subject Classification}
\makeatother

\usepackage[OT2, T1]{fontenc} 

\usepackage[textsize=scriptsize]{todonotes}

\usepackage{scalerel,stackengine}  

\newcommand\pig[1]{\scalerel*[5pt]{\big#1}{%
  \ensurestackMath{\addstackgap[1.5pt]{\big#1}}}}

\newcommand{\sss}{\scriptscriptstyle}
\newcommand{\foo}{\scriptstyle}

\newcommand{\compactlist}[1]{\setlength{\itemsep}{0pt} \setlength{\parskip}{0pt} \setlength{\leftskip}{-0.#1em}}

\newcommand{\Sum}{\textstyle\sum\limits}

%
%

\numberwithin{equation}{section}

\DeclareRobustCommand{\SkipTocEntry}[5]{}

%
%

\theoremstyle{plain}

\newtheorem{theorem}{Theorem}[section]

\newtheorem*{theorem*}{Theorem}

\newtheorem{prop}[theorem]{Proposition}
\newtheorem{lemma}[theorem]{Lemma}
\newtheorem{lem}[theorem]{Lemma}

\newtheorem{cor}[theorem]{Corollary}

\theoremstyle{definition}
\newtheorem{definition}[theorem]{Definition}

\newtheorem{rem}[theorem]{Remark}

%
%

%
%

%
%


%
%

%
%


\newcommand{\gd}{\delta} 
\newcommand{\gD}{\Delta} 
\newcommand{\gve}{\varepsilon} 
  
\newcommand{\gvf}{\varphi}

\newcommand{\gL}{\Lambda}

\newcommand{\gs}{\sigma}

%
%

\newcommand{\cM}{{\mathcal M}}

\newcommand{\cO}{{\mathcal O}}
\newcommand{\cP}{{\mathcal P}}

%
%

\newcommand{\Hom}{\operatorname{Hom}}

\newcommand{\Tor}{\operatorname{Tor}}
\newcommand{\Ext}{\operatorname{Ext}}
\newcommand{\Coext}{\operatorname{Coext}}
\newcommand{\Cotor}{\operatorname{Cotor}}

\newcommand{\bfTor}{\operatorname{{\mathbf{Tor}}}}
\newcommand{\bfExt}{\operatorname{{\mathbf{Ext}}}}
\newcommand{\bfCotor}{\operatorname{{\mathbf{Cotor}}}}
\newcommand{\bfCoext}{\operatorname{{\mathbf{Coext}}}}

\newcommand{\id}{{\rm id}}




%
%


%
%



\newcommand{\pl}{\partial}


\newcommand{{\Hl}}{{H^{\ell}}} 
\newcommand{{\mHop}}{{m_{H^{\rm op}}}} 
\newcommand{{\Hop}}{{H^{\rm op}}} 
\newcommand{{\mUop}}{{m_{U^{\rm op}}}} 
\newcommand{{\mUopp}}{{m_{\scriptscriptstyle{U^{\rm op}}}}} 
\newcommand{{\Uop}}{{U^{\rm op}}}
\newcommand{{\mVop}}{{m_{V^{\rm op}}}} 
\newcommand{{\Vop}}{{V^{\rm op}}}  
\newcommand{{\Ae}}{{A^{\rm e}}}
\newcommand{{\Be}}{{B^{\rm e}}}
\newcommand{{\Ue}}{{U^{\rm e}}}
\newcommand{{\He}}{{H^{\rm e}}}
\newcommand{{\Aop}}{{A^{\rm op}}}
\newcommand{{\Aope}}{({A^{\rm op}})^{\rm e}}
\newcommand{{\Aopl}}{{A^{\rm op}_\pl}}

\newcommand{{\Bop}}{{B^{\rm op}}}
\newcommand{{\Bopp}}{{\scriptscriptstyle{{B^{\rm op}}}}}
\newcommand{{\Bope}}{({B^{\rm op}})^{\rm e}}
\newcommand{{\Bpl}}{{B_\pl}}

\newcommand{{\op}}{{{\rm op}}}
\newcommand{{\coop}}{{{\rm coop}}}
\newcommand{{\sop}}{{*^{\rm op}}}
\newcommand{{\co}}{{{\rm co}}}

\newcommand{\kmod}{k\mbox{-}\mathbf{Mod}}                     %
                     %
                  %
                  %
         %
                     %
                     %
                     %

                  %

\newcommand{\hmod}{H\mbox{-}\mathbf{Mod}}                     

         %

                     %

                     %
                     %
                     %


\newcommand{{\gog}}{{G \rightrightarrows G_0}}

\newcommand{{\rra}}{\rightrightarrows}

\newcommand{{\lra}}{\ \longrightarrow \ }
\newcommand{{\lla}}{\ \longleftarrow \ }
\newcommand{{\lma}}{\ \longmapsto \ }



\newcommand{{\bull}}{{\scriptscriptstyle{\bullet}}}
\newcommand{{\qqquad}}{{\quad\quad\quad}}


\newsavebox{\foobox}



\newcommand{\mancino}{{\,\scalebox{0.7}{\rotatebox{90}{\mancone}}\,}}

\sloppy

\begin{document}

\title{Cyclic duality between BV algebras and BV modules} 

\author{Niels Kowalzig}

\begin{abstract}
%
  We show that
  if an operad is at the same time a cosimplicial object such that the respective structure maps are compatible with the operadic composition in a natural way, then one obtains a Gerstenhaber algebra structure on cohomology, and if the operad is cyclic, even that of a BV algebra. In particular,
  if a cyclic opposite module over an operad with multiplication is itself a cyclic operad that meets the cosimplicial compatibility conditions, the cohomology of its cyclic dual turns into a BV algebra. This amounts to conditions for when the cyclic dual of a BV module is endowed with a BV algebra structure, a result we exemplify by looking at classical and less classical (co)homology groups in Hopf algebra theory. 
\end{abstract}

\address{Dipartimento di Matematica, Universit\`a di Roma Tor Vergata, Via della Ricerca Scientifica 1, 00133 Roma, Italy}

\email{niels.kowalzig@uniroma2.it}

%

\keywords{
  Operads, cyclic opposite modules, cyclic duals, Gerstenhaber algebras, BV algebras, derived functors, Hopf algebras.
}

\subjclass[2020]{
18M65, 18M85, 16E40, 18G15.
}

\maketitle

\setcounter{tocdepth}{2}
\tableofcontents

\section*{Introduction}

\addtocontents{toc}{\protect\setcounter{tocdepth}{1}}

Higher structures, such as brackets and products on cohomology groups or on cochain spaces only, extensively appear in many contexts in the fields of algebra, geometry, topology and
mathematical physics. For example, {\em Batalin-Vilkoviski\u\i} algebras, as special cases of Gerstenhaber algebras equipped with a degree $+1$ differential whose bracket measures the failure
of this differential to be a (graded) derivation of the cup product, were 
originally introduced in quantum field theory to deal with path integrals in the presence of symmetries. In algebraic
topology, apart from their massive appearance in the aforementioned study of higher structures on an abstract level, 
BV algebras (or their homotopy versions) naturally emerge in applications to string topology, Poisson geometry, algebraic deformation theory, and many more.

\subsection*{Aims and objectives}

This article investigates, on a formal level, what was observed on examples in \cite{Kow:ANCCOTCDOE}: that for a pair composed by a BV (short for {\em Batalin-Vilkoviski\u\i}) algebra and a BV module (see below) over it, their respective cyclic duals turn these r\^oles around, that is to say, the BV module becomes a BV algebra and the BV algebra one started with now yields a BV module over the latter. Pairs formed by Gerstenhaber algebras (with BV algebras seen as their stronger versions) and BV modules over these are known under the term {\em noncommutative differential} or {\em Cartan calculus}, in the sense of Rinehart \cite{Rin:DFOGCA} or Nest-Ta\-mar\-kin-Tsygan \cite{NesTsy:OTCROAA, TamTsy:NCDCHBVAAFC, Tsy:CH}.

More precisely, as shown in \cite{Kow:GABVSOMOO},
the datum of a cyclic
unital opposite module $\cM$ over an operad $\cO$ with multiplication, that is, a sequence 
$\{\cM(n)\}_{n \geq 0}$ of $k$-modules equipped with a collection of maps
$$
\bullet_i\colon \cO(p) \otimes \cM(n) \to \cM(n-p+1),  \quad 0 \leq i \leq n-p+1,
$$
that are associative in a sense (see Definition \ref{molck} for full details) and compatible with a degree preserving operator $t$, the cyclic operator, is sufficient to ob\-tain on the couple $(\cO, \cM)$ the structure of a {\em homotopy} noncommutative differential calculus: as an illustration, one might think of the chain complex
computing $\Tor$ groups over the cochain complex computing $\Ext$ groups in the realm of Hopf algebras or Hopf algebroids, and hence for associative algebras in Hoch\-schild theory as well.
A (homotopy) calculus implies, in particular, the existence of certain (homotopy) structure maps such as a {\em cap product} as well as of a {\em Lie derivative} on $\cM$ along $\cO$, which, up to homotopy terms, obey relations analogous to those by Cartan known in classical differential geometry.

In such a situation, one might want to call (for reasons that are obvious from the construction) the opposite module $\cM$ a {\em homotopy BV module} over the operad $\cO$. Descending to (co)ho\-mo\-lo\-gy, the groups $H_\bull(\cM)$ are, in this spirit, called a {\em BV module} over $H^\bull(\cO)$ since this construction depends on the cyclic operator $t$ or rather on the cyclic (Connes-Rinehart-Tsygan) boundary $B$ of degree $-1$ induced by $t$, which is the dual construction to the $+1$ differential mentioned above appearing in BV algebras (sometimes, see \cite{Get:BVAATDTFT}, denoted by $\gD$ but $B$ would be better-grounded, at least  from this perspective).

Based on this construction, the pattern  observed in \cite{Kow:ANCCOTCDOE} with respect to the {\em cyclic dual}, a duality notion originating from the self-duality of the cyclic category \cite{Con:CCEFE}, by examining examples coming from Hopf algebroid theory was quite remarkable: as a homotopy BV module, the graded $k$-module $\cM$ is, in particular, a cyclic $k$-module the underlying simplicial structure of which is induced by the multiplication datum in the operad $\cO$. As such, it has a cyclic dual $\hat\cM$, which, by construction, comes with the structure of a cocyclic $k$-module, inevitably induced again by the multiplication structure of the operad $\cO$; see \S\ref{briefbrief} for a brief account on cyclic duality and Lemma \ref{Bop} for the specific situation under consideration here. In the examples under examination (think of $\Cotor$ groups as cyclic duals to $\Tor$ groups), the graded $k$-module underlying the cochain complex given by $\hat\cM$ forms itself an op\-erad, and the respective co\-ho\-mo\-lo\-gy groups $H^\bull(\hat\cM)$ turn out not only to be a Gerstenhaber but also a BV algebra; to sum up, the cyclic dual of a (homotopy) BV module yields a (homotopy) BV algebra. Formalising this to a general level is, as said, the main motivation for the article at hand.

Adding, moreover, the assumption that the operad $\cO$ is not only multiplicative but cyclic as well, and hence yields a cocyclic $k$-module, too,
one can pass to the cyclic duals {\em both} for $\cO$ and $\cM$ and, as furthermore observed in {\em op.~cit.}, these exchange their r\^oles:
whereas before $H_\bull(\cM)$ was a BV module over $H^\bull(\cO)$ and hence the pair $\big(H^\bull(\cO), H_\bull(\cM) \big)$ formed a noncommutative calculus, now, after passing to cyclic duals, $H_\bull(\hat\cO)$ defines a BV module over $H^\bull(\hat\cM)$, and hence the couple $\big(H^\bull(\hat\cM), H_\bull(\hat\cO) \big)$ yields a noncommutative differential calculus as well. As a concrete example, one should think of the pair $(\Ext, \Tor)$ being transformed into the pair $(\Cotor, \Coext)$. Formalising this observation to a general level as well in order to obtain a complete duality between BV algebras and BV modules turns out to be more intricate: this should be based, roughly speaking, on a sort of {\em coloured} (two-sided) opposite action, as a collection of maps
$$
\cO(p) \otimes \cM(n) \to
\begin{cases}
  \cM(n-p+1) & \mbox{if} \ p <n,
  \\
  \cO(p-n+1) & \mbox{if} \ n <p,  
  \end{cases}
$$
with special attention to the case $p=n$, such that not only both colours define the respective structure of a (cyclic) left resp.\ right opposite module over an operad but at the same time are also compatible with both underlying operadic compositions on $\cO$ and $\cM$ in a natural way. This approach is presently
under closer examination.


\subsection*{Main results}
It is well-known that a (nonsymmetric) operad with multiplication (in the category of $k$-modules) induces a cosimplicial structure whose cohomology groups form a Gerstenhaber algebra \cite{Ger:TCSOAAR, GerVor:HGAAMSO}; if the operad is cyclic with compatible multiplication, then these, in particular, produce a BV algebra structure \cite{Men:BVAACCOHA}. Here, the crucial observation is that these two results essentially only depend on the fact that the cofaces and codegeneracies (induced by the operad multiplication) are {\em compatible} in a natural way with the vertical operadic composition, a property which we generalise in Definition \ref{compitompi} to {\em any}
cosimplicial structure, {\em i.e.}, with respect to possibly preexistent cofaces and codegeneracies not necessarily originating from an operad multiplication. This allows us to prove (we refer to the main text for notation and definitions) in Corollaries \ref{alsomain} \& \ref{alsoalsomain}:

\begin{theorem*}
  The cohomology groups of a cosimplicial-compatible nonsymmetric operad (in $\kmod$)
  form a Gerstenhaber algebra. If the operad is cyclic as well, then this Gerstenhaber algebra is Batalin-Vilkoviski\u\i. 
  \end{theorem*}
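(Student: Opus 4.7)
The plan is to follow the classical proofs of Gerstenhaber \cite{Ger:TCSOAAR} and Gerstenhaber--Voronov \cite{GerVor:HGAAMSO} for the first assertion, and of Menichi \cite{Men:BVAACCOHA} for the second, but with every direct appeal to an operad multiplication $\mu \in \cO(2)$ replaced by an abstract invocation of the cosimplicial-compatibility conditions of Definition \ref{compitompi}. The Gerstenhaber bracket $[-,-]$ on $\cO$ is purely operadic, built from the partial compositions $\circ_i$, and is therefore available for free. A cup product $\smile$ will be defined from the cofaces $\delta_i$ in such a way that in the classical case (cosimplicial structure induced by a multiplication) it reduces to $\varphi \smile \psi = \mu \circ (\varphi, \psi)$; setting $\delta := \sum_{i=0}^{n+1} (-1)^i \delta_i$, the cosimplicial identities give $\delta^2 = 0$, so $(\cO^\bull, \delta)$ is a cochain complex.

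\textbf{Descent to cohomology.} To obtain a Gerstenhaber algebra on $H^\bull(\cO)$ one has to check, at cochain level, that $(i)$ $\delta$ is a graded derivation of $\smile$, so that $\smile$ descends; $(ii)$ $\smile$ becomes graded commutative on cohomology, via Gerstenhaber's explicit homotopy involving the brace $\varphi\{\psi\}$; and $(iii)$ $[-,-]$ is a graded derivation of $\smile$ and satisfies the graded Jacobi identity. Item $(iii)$ is purely operadic and follows from Gerstenhaber's pre-Lie computation, which does not touch the cosimplicial structure at all. Items $(i)$ and $(ii)$ are precisely the places where the cosimplicial-compatibility of Definition \ref{compitompi} enters: the conditions are tailored so that each $\delta_i$ interacts with $\smile$ and with the braces through the same Hirsch-type identities that Gerstenhaber originally derived from a multiplication. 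Passing to cohomology kills the homotopy terms and yields the Gerstenhaber algebra structure.

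\textbf{The BV upgrade and the main obstacle.} In the cyclic case, the cyclic operator $\tau$ on $\cO(n)$ together with the cosimplicial structure produces a Connes--Rinehart--Tsygan boundary $B$ of cohomological degree $-1$ on $\cO^\bull$, and following Menichi one needs to verify the BV identity
\begin{equation*}
[\varphi, \psi] \;=\; B(\varphi \smile \psi) \,-\, B(\varphi) \smile \psi \,-\, (-1)^{|\varphi|} \varphi \smile B(\psi)
\end{equation*}
on $H^\bull(\cO)$; this in turn reduces to a cochain-level identity that involves only the compatibility of $\tau$ with the cofaces/codegeneracies and with $\circ_i$, which cyclicity of the operad plus Definition \ref{compitompi} is designed to ensure. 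The main obstacle I foresee is the identity-by-identity verification that Definition \ref{compitompi} is genuinely strong enough to reproduce each of the cochain-level calculations of Gerstenhaber and Menichi---in particular the Hirsch-type derivation identity relating $\delta$, $\smile$ and the braces, and Menichi's formula expressing $[\varphi, \psi]$ as the failure of $B$ to be a graded derivation of $\smile$. This should amount to routine but lengthy bookkeeping, since the definition is set up precisely to abstract those minimal properties of $\mu$-induced cofaces on which the classical proofs rest.
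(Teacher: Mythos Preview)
Your overall plan is correct and matches the paper's approach: the paper likewise defines the cup product via the zeroth coface as $x \cup y := (\gd_0 y) \circ_1 x$, proves the dg algebra property and graded commutativity up to homotopy from the conditions of Definition~\ref{compitompi}, and establishes the BV identity by a cochain-level homotopy formula in the style of Menichi, with an explicit homotopy operator $S_x y$ built from $\tau$, the extra codegeneracy $\gs_{-1}$, and the operadic composition.

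There is, however, one genuine slip in your outline. You write that item~(iii), the Leibniz identity for the bracket against the cup product, ``is purely operadic and follows from Gerstenhaber's pre-Lie computation, which does not touch the cosimplicial structure at all.'' This is true for the graded Jacobi identity, but \emph{not} for the Leibniz rule: the bracket is operadic, but the cup product is built from the cofaces, so their interaction cannot be purely operadic. Concretely, even the on-the-nose identity $(y \cup z)\{x\} = y\{x\} \cup z + (-1)^{(p-1)q}\, y \cup z\{x\}$ requires the compatibility relations \eqref{besser1}--\eqref{besser2}, and the other direction $x\{y \cup z\}$ only holds up to an explicit homotopy $F(x,y,z)$ that the paper writes down (Lemma~\ref{leibnizuptohomotopy}); constructing $F$ and checking it works is again a computation with Definition~\ref{compitompi}. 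So the Leibniz step belongs with your items~(i) and~(ii), not with the free operadic facts, and you should budget for that bookkeeping as well.
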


In more detail, this originates from a stronger result already on the level of cochains, {\em i.e.}, from a homotopy formula: let $\cP$ be a nonsymmetric operad equipped with the structure of a cocyclic $k$-module $(\cP^\bull, \gd_\bull, \gs_\bull, \tau)$. Then defining $\gd$ as usual as the (alternating) sum over all cofaces, along with a cup product resp.\ a cyclic boundary of degree $-1$,
$$
 x \cup y := (\gd_0 y) \circ_{1} x, \qquad   B x  :=  \Sum^{p-1}_{i=0} (-1)^{\sss \sss (p-1)(i-1)} \tau^{-i-1} ( \gs_{p-1} \tau x),
$$
 along with a higher $B$-operation in the form of a degree $-2$ homotopy operator,
 $$
  S_x y :=
\Sum^{p-1}_{i=1} \Sum^i_{j=1} (-1)^{\scriptscriptstyle (q-1)i + (p+q)j + pq}  \, \tau^{-j} (\gs_{p-1} \tau x) \circ_{p-i+j-1} y
 $$
 for any $x,y$ of degree $p$ resp.\ $q$ on the normalised complex induced by $\cP$, the Gerstenhaber bracket  can be expressed as
 \begin{equation*}
\begin{split}
  \{x,y\} &\, = \,
(-1)^{\sss (q-1)p} B(y \cup x)
    - (-1)^{\sss p} Bx \cup y
        - (-1)^{\sss p(q-1)} By \cup x  
\\
& \, \, \quad
+ (-1)^{\sss pq} \delta(S_xy)  + (-1)^{\sss p} S_{\gd x} y - (-1)^{\sss pq} S_{x}\gd y
\\
  & \, \, \quad  - (-1)^{\sss q} \delta(S_y x)  + (-1)^{\sss p(q-1)} S_{\gd y} x  - (-1)^{\sss p+q} S_y \gd x,
\end{split}
\end{equation*}
 that is, the bracket is generated by $B$ and the cup product up to homotopy.

 A situation where this occurs is by considering the cyclic dual $\hat\cM$ of a cyclic opposite module $(\cM, \bullet_i, t)$ over an operad with multiplication $(\cO, \mu, e)$ as mentioned at the beginning, which becomes a cosimplicial $k$-module by means of
 $
 \gd_i x = e \bullet_i x$ and $\gs_j x = \mu \bullet_j x$,
see Lemma \ref{Bop} for details and notation. If $\hat\cM$ happens to be itself an operad that is cosimplicial and cocyclic-compatible, then we can define, analogously to the above,
$$
x \cup y := (e \bullet_0 y) \circ_{1} x, \quad  B x :=  \Sum^{p-1}_{i=0} (-1)^{\sss \sss (p-1)(i-1)} t^i ( \mu \bullet_0 tx),
$$
along with
$$
  S_x y :=
\Sum^{p-1}_{i=1} \Sum^i_{j=1} (-1)^{\scriptscriptstyle (q-1)i + (p+q)j + pq}  \, t^{j-1} (\mu \bullet_0 t x) \circ_{p-i+j-1} y
$$
to obtain the analogue of the above homotopy formula for the respective Gerstenhaber bracket, which proves the existence of a BV structure on the cohomology groups $H^\bull(\hat\cM)$ of the cyclic dual of the cyclic opposite module. We can therefore state in Proposition \ref{gerstasgerstcan} \& Theorem \ref{main}:

\begin{theorem*}
  If the cyclic dual of a cyclic opposite module over an operad with multiplication is itself a cosimplicial and cocyclic-compatible operad, then its cohomology groups carry the structure of a Batalin-Vilkoviski\u\i\ algebra.
  \end{theorem*}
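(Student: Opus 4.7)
The plan is to reduce the statement directly to the general result provided by Corollaries \ref{alsomain} and \ref{alsoalsomain}. Starting from the data $(\cO, \mu, e)$ and $(\cM, \bullet_i, t)$, Lemma \ref{Bop} equips the cyclic dual $\hat\cM$ with a cocyclic $k$-module structure whose cofaces and codegeneracies are $\gd_i = e \bullet_i -$ and $\gs_j = \mu \bullet_j -$, with cocyclic operator $\tau$ obtained from $t$ via cyclic duality. The hypothesis of the theorem is precisely that $\hat\cM$ carries in addition the structure of a nonsymmetric operad whose operadic composition $\circ_i$ is compatible, in the sense of Definition \ref{compitompi}, both with these cofaces/codegeneracies and with $\tau$. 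In other words, $\hat\cM$ satisfies the premises of the general theorem.

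Applying Corollary \ref{alsomain} to this cosimplicial-compatible operad $\hat\cM$ then produces a Gerstenhaber algebra structure on $H^\bull(\hat\cM)$, with cup product $x \cup y = (\gd_0 y) \circ_1 x = (e \bullet_0 y) \circ_1 x$, matching the formula stated in the theorem. The cocyclic-compatibility assumption allows one to invoke Corollary \ref{alsoalsomain}, which yields on the normalised complex of $\hat\cM$ the homotopy identity displayed in the introduction, expressing the Gerstenhaber bracket as a graded commutator of $B$ with the cup product up to terms of the form $\gd S_xy \pm S_{\gd x}y \pm S_x\gd y$. Descending to cohomology, $B$ becomes a square-zero derivation of the Gerstenhaber algebra whose failure to be a $\cup$-derivation realises the bracket, which is exactly the defining property of a BV structure on $H^\bull(\hat\cM)$.

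The only substantive work — and the main obstacle — is the bookkeeping involved in translating the abstract cocyclic formulas $Bx = \Sum (-1)^{\sss (p-1)(i-1)} \tau^{-i-1}(\gs_{p-1}\tau x)$ and $S_xy = \Sum\Sum (-1)^{\sss (q-1)i+(p+q)j+pq} \tau^{-j}(\gs_{p-1}\tau x) \circ_{p-i+j-1} y$ appearing in Corollary \ref{alsoalsomain} into the concrete expressions $Bx = \Sum (-1)^{\sss (p-1)(i-1)} t^i(\mu \bullet_0 tx)$ and $S_xy = \Sum\Sum (-1)^{\sss (q-1)i+(p+q)j+pq} t^{j-1}(\mu \bullet_0 tx) \circ_{p-i+j-1} y$ stated in the theorem. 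This requires careful tracking of the sign and index conventions inherent in the cyclic duality functor of Lemma \ref{Bop}: one has $\tau = t^{-1}$ up to degree shift, the top codegeneracy $\gs_{p-1}$ corresponds to $\mu \bullet_0 -$ only after reindexing by $\tau$, and exponents on $\tau$ reverse sign on passing back to powers of $t$. Once this dictionary is made explicit, the homotopy identity transports verbatim and the BV statement on $H^\bull(\hat\cM)$ follows.
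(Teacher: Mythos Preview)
Your proposal is logically circular within the paper's structure. Corollaries \ref{alsomain} and \ref{alsoalsomain} are not proven independently: the paper establishes Proposition \ref{gerstasgerstcan} and Theorem \ref{main} (which together constitute the statement you are asked to prove) by direct computation using the explicit cosimplicial operators $\gd_i = e \bullet_i -$, $\gs_j = \mu \bullet_j -$ and the relations \eqref{opere1}--\eqref{unschoen2}, and only \emph{afterwards} observes that these computations depend solely on the abstract compatibility conditions of Definition \ref{compitompi}, thereby deducing the general Corollaries. The proof of Corollary \ref{alsoalsomain} says this explicitly: ``one can rewrite the entire proof of Theorem \ref{main} in a more general setting''. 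So invoking the Corollaries to prove Theorem \ref{main} is appealing to a result whose only justification in the paper is the very theorem you are supposed to establish.

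Methodologically, your reduction is in a sense the right observation in reverse: the paper's point is precisely that the concrete $\hat\cM$ case and the abstract $\cP$ case are equivalent in difficulty, since the former is carried out entirely through the compatibility relations. But as a proof of the stated theorem you would still need to supply the actual content---the long homotopy computation verifying \eqref{nunjanaja2}---rather than deferring it to a corollary that presupposes it. The ``bookkeeping'' paragraph about translating $B$ and $S_x$ from $\tau$-form to $t$-form is fine (this is exactly Lemma \ref{Bop}), but it is peripheral: the substantive work is the term-by-term verification of the BV homotopy identity, which your proposal does not address.
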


In the final example section, see \S\ref{examples}, we show how to deploy these general constructions in order to
show
that both the cohomology groups $\Ext_H(k,k)$ as well as $\Cotor_H(k,k)$ for a Hopf algebra over a commutative ring $k$ with involutive antipode are not only Gerstenhaber algebras but, in particular, BV.

\subsection*{Notation and conventions}
\label{schonweniger}
In all what follows, $k$ denotes a commutative ring, sometimes a field, usually of characteristic zero; denote by $\kmod$ the category of $k$-modules. As customary,
unadorned tensor products (or Homs) are meant to be over $k$.
The term {\em operad} usually refers to nonsymmetric operads in $\kmod$, see below.

\subsection*{Acknowledgements}
Partially supported by the MIUR Excellence Department Project MatMod@TOV
(CUP:E83C23000330006) and by the PRIN 2017 {\it Real and Complex Manifolds: Topology, Geometry, and Holomorphic Dynamics}, Ref.~2017JZ2SW5. The author is a member of the {\em Gruppo Nazionale per le Strutture Algebriche, Geometriche e le loro
Applicazioni} (GNSAGA-INdAM).

\section{Preliminaries}
\label{prel}

\subsection{Operads, Gerstenhaber and BV algebras}
\label{pamukkale1}

The following is a brief summary of standard material needed in the sequel, and which can be found, for example, in \cite{Ger:TCSOAAR, GerVor:HGAAMSO, Kad:AIASICATRHT, Get:BVAATDTFT}, or elsewhere.
A {\em (non\-symmetric) operad}~$\cO$ (in the category 
of $k$-modules) is a sequence $\{\cO(n)\}_{n \geq 0}$ of $k$-modules 
with $k$-bilinear operations $\circ_i \colon \cO(p) \otimes \cO(q) \to \cO({p+q-1})$ for $i = 1, \ldots, p$,
subject to: 
\begin{eqnarray}
\label{danton}
\nonumber
\gvf \circ_i \psi &=& 0 \qquad \qquad \qquad \qquad \qquad \! \mbox{if} \ p < i \quad \mbox{or} \quad p = 0, \\
(\varphi \circ_i \psi) \circ_j \chi &=& 
\begin{cases}
(\varphi \circ_j \chi) \circ_{i+r-1} \psi \qquad \mbox{if} \  \, j < i, \\
\varphi \circ_i (\psi \circ_{j-i +1} \chi) \qquad \hspace*{1pt} \mbox{if} \ \, i \leq j < q + i, \\
(\varphi \circ_{j-q+1} \chi) \circ_{i} \psi \qquad \mbox{if} \ \, j \geq q + i.
\end{cases}
\end{eqnarray}
Call an operad {\em unital} if there is an element $\mathbb{1} \in \cO(1)$ such that 
$
\gvf \circ_i \mathbb{1} = \mathbb{1} \circ_1 \gvf = \gvf
$ 
for all $\gvf \in \cO(p)$ and $i \leq p$, and call it {\em with multiplication} if there is an element  $\mu \in \cO(2)$ plus an element $e \in \cO(0)$ such that $\mu \circ_1 \mu = \mu \circ_2 \mu$ and 
$\mu \circ_1 e = \mu \circ_2 e = \mathbb{1}$.

An operad with multiplication  $(\cO, \mu, e)$ turns into a cosimplicial $k$-module with cofaces given by $\gd_0 \gvf := \mu \circ_2 \gvf$, $\gd_i \gvf := \gvf \circ_{i} \mu$ for $i = 1, \ldots, p$, and $\gd_{p+1} \gvf := \mu \circ_1 \gvf$ for $\gvf \in \cO(p)$, together with the codegeneracies $\sigma_j \gvf := \gvf \circ_{j+1} e$ for $j = 0, \ldots, p-1$. This induces a cochain complex denoted again by $\cO$ or $\cO^\bull$ given by $\cO^p := \cO(p)$ in degree $p$, 
 and with differential $\gd \colon \cO(p) \to \cO({p+1})$ defined, as usual, by the alternating sum $\gd := \sum^{p+1}_{i=0} (-1)^i \gd_i$ over all cofaces, and with co\-ho\-mology
$
H^\bullet(\cO) := H(\cO^\bullet, \gd).
$ 
Moreover, 
one can form the {\em cup product}
\begin{equation}
  \label{cupye}
  \gvf \smallsmile \psi := (\mu \circ_2 \psi) \circ_1 \gvf \ \in \cO(p+q)
\end{equation}
for $\gvf \in \cO(p)$ and $\psi \in \cO(q)$,
which turns $(\cO, \smallsmile, \gd)$ into a dg algebra. Defining 
the {\em braces}
\begin{equation}
  \label{bracytracy}
\varphi\{\psi\} := \Sum^{p}_{i=1}
(-1)^{\sss (q-1)(i-1)} \varphi \circ_i \psi  \ \in \cO({p+q-1})
  \end{equation}
finally allows for the construction of the {\em (Gerstenhaber) bracket}
\begin{equation}
  \label{bracket-po-packet}
{\{} \varphi,\psi \}
:= \varphi\{\psi\} - (-1)^{\sss (p-1)(q-1)} \psi\{\varphi\}.
\end{equation}
Descending to cohomology leads to the well-known fact, see \cite{Ger:TCSOAAR}, that the triple $(H^\bullet(\cO), \smallsmile, \{\cdot, \cdot\})$ forms a {\em Gerstenhaber algebra} over $k$.
By this we mean, in general, a $k$-algebra $V$ with a graded commutative product $\cup$ of degree zero, together with a graded Lie bracket $\{\cdot, \cdot\}$ of degree $-1$ such that the {\em Leibniz identity} 
\begin{equation}
  \label{leibniz}
\{ x, y \cup z\} = \{x, y\} \cup z + (-1)^{\sss (x-1)y} y \cup \{x, z\} 
\end{equation}
holds for all $x,y,z \in V$. Finally, a {\em BV algebra}, short for {\em Batalin-Vilkoviski\u\i\ algebra}, is a Gerstenhaber algebra whose bracket is generated by a differential $B$ of degree $-1$ and the (cup) product, that is
    \begin{equation}
        \label{bvbv}
    \{x,y\}
 = (-1)^{\sss x} B(x \cup y) - (-1)^{\sss x} Bx \cup y  - x \cup B y,
    \end{equation}
    see, for example, \cite[Prop.~1.2]{Get:BVAATDTFT}.
Alternatively, a BV algebra is a Gerstenhaber algebra whose bracket measures, up to a sign, the failure of $B$ being a derivation of the cup product. A (by now) fundamental result \cite{Men:BVAACCOHA} states that if an operad $\cO$ with multiplication is {\em cyclic} in the sense of Eq.~\eqref{unschoen}, to be discussed later, and if the cyclic operator is compatible with the multiplication as specified in {\it op.~cit.}, then its cohomology $H^\bullet(\cO)$ is not only a Gerstenhaber but moreover a BV algebra.

\subsection{Opposite modules and cyclic opposite modules}
In this subsection we are going to recall {\em opposite} modules over operads as introduced in \cite{Kow:GABVSOMOO}, which differ from the customary operad modules by the fact that when acting with the operad on it, trees become {\em smaller}, that is, the number of leaves is going to be reduced, not increased. We remark here that passing to negative degrees does not produce one notion out of the other; it is rather the $k$-linear dual that mediates between the two notions.
More precisely:

\begin{definition}
  \label{molck}
  Let $\cO$ be a (unital) operad.
  \begin{enumerate}
    \compactlist{99}
    \item
An {\em opposite (unital, left) $\cO$-module} consists of
a family of $k$-modules $\cM = \{ \cM(n) \}_{n \geq 0}$ endowed with $k$-linear
operations
$$
\qquad        \bullet_i \colon 
        \cO(p) \otimes \cM(n) \to \cM({n-p+1}), \qquad 
1 \leq i \leq  n- p +1, \ \ 0 \leq p \leq n, 
$$
defined to be zero if $p > n$, such that for $\gvf \in \cO(p)$, $\psi \in \cO(q)$, and $x \in
\cM(n)$
\begin{eqnarray}
\label{SchlesischeStr}
\gvf \bullet_i (\psi \bullet_j x) \!\!\!\!&=\!\!\!\!& 
\begin{cases} 
\psi \bullet_j (\gvf \bullet_{i + q - 1}  x)  & \mbox{if} \ j < i, 
\\
(\gvf \circ_{j-i+1} \psi) \bullet_{i}  x  & \mbox{if} \ j - p < i \leq j, \\
\psi \bullet_{j-p + 1} (\gvf \bullet_{i}  x)  & \mbox{if} \ 1 \leq i \leq j - p,
\end{cases} 
\\[1mm]
\label{SchlesischeStr-1}
\mathbb{1} \bullet_i x \!\!\!\!&=\!\!\!\!&  x \hspace*{3cm} \, \mbox{for \ } i = 0, \ldots, n,
\end{eqnarray}
holds for $p, q, n \geq 0$, and where for $p=0$ the relations \eqref{SchlesischeStr} are understood without the middle line.
\item
An opposite $\cO$-module is called {\em cyclic} if 
there is an
an {\em extra} $k$-linear composition map
$$
\bullet_0 \colon \cO(p) \otimes \cM(n) \to \cM({n-p+1}), \quad 0 \leq p \leq n+1,
$$
defined to be zero if $p > n+1$, such that Eqs.~\eqref{SchlesischeStr}--\eqref{SchlesischeStr-1} hold as well for the case $i= 0$ or $j=0$, plus a 
degree-preserving morphism $t\colon \cM(n) \to \cM(n)$ for all $ n \geq 1$ such that
\begin{equation}
\label{lagrandebellezza1}
t(\gvf \bullet_{i} x) = \gvf \bullet_{i+1} t(x),  \qquad i = 0, \ldots, n-p,
\end{equation}
is true for $\gvf \in \cO(p)$ and $x \in \cM(n)$, and such that, finally,
\begin{equation}
\label{lagrandebellezza2}
t^{n+1} = \id
\end{equation}
holds on $\cM(n)$.
\end{enumerate}
  \end{definition}
%
%
\noindent The following figure illustrates the idea of an opposite $\cO$-module:
\begin{center}
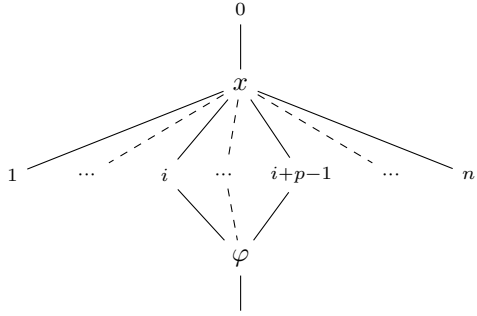

  \label{bretzel}
\begin{tikzpicture}
\node(0) at (0, 0) {$\foo 0$};
\node(x) at (0,-1) {$x$};
\node(1) at (-3,-2.2) {$\foo 1$};
\node(2) at (-2,-2.2) {$\foo \cdots$};
\node(i) at (-1,-2.2) {$\foo i$};
\node(4) at (-0.2,-2.2) {$\foo \cdots$};
\node(i+p-1) at (0.8,-2.2) {$\foo i+p-1$};
\node(6) at (2,-2.2) {$\foo \cdots$};
\node(n) at (3,-2.2) {$\foo n$};
\node(phi) at (0,-3.3) {$\varphi$};

\draw(x)--(1);
\draw[dashed](x)--(2);
\draw(x)--(i);
\draw[dashed](x)--(4);
\draw(x)--(i+p-1);
\draw[dashed](x)--(6);
\draw(x)--(n);
\draw(i)--(phi);
\draw[dashed](4)--(phi);
\draw(i+p-1)--(phi);
\draw(0)--(x);

\draw(phi)--(0,-4);
\end{tikzpicture}
\captionof{figure}{The operation $\gvf \bullet_i x$ on opposite modules.}\label{tratto}
\end{center}
The next figure gives a graphical understanding of the condition \eqref{lagrandebellezza1} involving the cyclic operator $t$ in case of a cyclic opposite $\cO$-module:

\begin{center}
\begin{tikzpicture}
\node(0) at (-6, -3) {$\foo 0$};
\node(x) at (-4,-1.4) {$x$};
\node(2) at (-5.5,-3) {$\foo \cdots$};
\node(i) at (-5,-3) {$\foo i$};
\node(4) at (-4.5,-3) {$\foo \cdots$};
\node(i+p-1) at (-3.7,-3) {$\foo i+p-1$};
\node(6) at (-2.7,-3) {$\foo \cdots$};
\node(n-1) at (-2,-3) {$\foo n-1$};
\node(n) at (-4,0) {$\foo n$};
\node(phi) at (-4.5,-4.3) {$\varphi$};

\draw[dashed](x)--(2);
\draw(x)--(i);
\draw[dashed](x)--(4);
\draw(x)--(i+p-1);
\draw[dashed](x)--(6);
\draw(x)--(n-1);
\draw(i)--(phi);
\draw[dashed](4)--(phi);
\draw(i+p-1)--(phi);

\draw(phi)--(-4.5,-5.3);

\draw  (-3.7,-1.5) to [out=330,in=270] (-1.5,-2) to [out=90,in=270] (-4,-0.2) ;
\draw (x) to [out=90, in=38] (-5.8, -2.8);

\node(0) at (1, -3) {$\foo 0$};
\node(x) at (3,-1.4) {$x$};
\node(2) at (1.5,-3) {$\foo \cdots$};
\node(i+1) at (2.2,-3) {$\foo i+1$};
\node(4) at (2.85,-3) {$\foo \cdots$};
\node(i+p) at (3.5,-3) {$\foo i+p$};
\node(6) at (4.3,-3) {$\foo \cdots$};
\node(n-1) at (5,-3) {$\foo n-1$};
\node(n) at (3,0) {$\foo n$};
\node(phi) at (2.8,-4.3) {$\varphi$};
\node(=) at (-0.3, -3) {$=$};
\node(t) at (-1, -4.5) {$t$};
\node(t) at (6, -3.4) {$t$};

\draw[dashed](x)--(2);
\draw(x)--(i+1);
\draw[dashed](x)--(4);
\draw(x)--(i+p);
\draw[dashed](x)--(6);
\draw(x)--(n-1);
\draw(i+1)--(phi);
\draw[dashed](4)--(phi);
\draw(i+p)--(phi);

 \draw(phi)--(2.8,-5.3);

\draw  (3.3,-1.5) to [out=330,in=270] (5.5,-2) to [out=90,in=270] (3,-0.2) ;
\draw (x) to [out=90, in=38] (1.2, -2.8);

\draw [thick, rounded corners] (-6.3, -0.5) rectangle (-1.2, -4.8);
\draw [thick, rounded corners] (0.7, -0.5) rectangle (5.8, -3.7);

 \end{tikzpicture}
\captionof{figure}{The relation $t(\gvf \bullet_{i} x) = \gvf \bullet_{i+1} t(x)$. 
}
\label{rel}
 \end{center}
In particular \cite[Prop.~3.5]{Kow:GABVSOMOO}, if the operad in question is an operad with multiplication
$(\cO, \mu, e)$, then a cyclic
opposite $\cO$-module 
$\cM$ turns into a cyclic $k$-module
via the cyclic operator $t$ and simplicial structure described by the faces $d_i\colon \cM(n) \to \cM({n-1})$ and degeneracies $s_j \colon \cM(n) \to \cM({n+1})$ defined by 
\begin{equation}
\label{colleoppio}
\begin{array}{rcll}
d_i(x) & = & \mu \bullet_{i} x, & i = 0, \ldots, n-1, \\
d_n(x) & = & \mu \bullet_0 t (x), & \\
s_j (x)& = & e \bullet_{j+1} x, & j = 0, \ldots, n, \\
\end{array}
\end{equation}
where $x \in \cM(n)$.
Observe that, as is the case for any cyclic $k$-module, one can produce an extra degeneracy $s_{-1} := t \, s_n$ which, in this construction, is given precisely by $e \bullet_0 -$ which has not been used so far in \eqref{colleoppio}.
Indeed, 
\begin{equation}
  \begin{split}
\label{cine40}
s_{-1}(x) = t \, s_n(x) &= t(e \bullet_{n+1} x)
\\
&=  t(e \bullet_{n+1} t^{n+1}x)
=  t^{n+2}(e \bullet_0 x) = e \bullet_0 x,
\end{split}
  \end{equation}
where we used \eqref{lagrandebellezza2} in step three and \eqref{lagrandebellezza1} in the fourth.



\subsection{Cyclic duality}
\label{briefbrief}
An interesting feature of 
Connes' cyclic category 
$\gL$ is its self-duality, that is to say, $\gL = \gL^\op$, and therefore
cyclic and cocyclic $k$-modules can be turned one into the other, {\em cf.}\ \cite{Con:CCEFE}, which is not possible merely on the simplicial level.
However, there are infinitely many
ways to do this due to the autoequivalences of the cyclic category \cite[6.1.14 \& E.6.1.5]{Lod:CH}.
We shall mainly use the
following standard convention to go from cyclic to cocyclic $k$-modules.

\begin{definition}
  \label{moulesfrites}
The {\em cyclic dual} of  
a cyclic $k$-module
$X_\bull = (X_\bull, d_\bull,
s_\bull, t_\bull)$ is
the cocyclic $k$-module 
$\hat X^\bull := (\hat X^\bull, \gd_\bull,\gs_\bull,
\tau_\bull)$, where $\hat X^n := X_n$, and
%
%
\begin{equation}
\label{moulesfrites1}
\begin{array}{rrrccll}
\gd_i \!\!\!\! & := \!\!\!\! & s_{i-1} \colon  & \hat X^n & \to & \hat X^{n+1}, & \quad 0 \leq i \leq n+1, \\
\gs_j \!\!\!\! & := \!\!\!\! & d_j \colon  & \hat X^n & \to & \hat X^{n-1}, & \quad 0 \leq j \leq n-1, \\
\tau_n \!\!\!\! & := \!\!\!\!  & t^{-1}_n  \colon  & \hat X^n & \to & \hat X^{n}, &
\end{array}
\end{equation}
in degree $n$.
\end{definition}

\begin{rem}
In case $i = 0$, this prescription means that the zeroth coface is given by the extra degeneracy $s_{-1} = t \, s_n$ as defined above.
In examples, one might encounter the situation in which the cyclic operator $t$ is not necessarily invertible. In such a situation, among other possibilities, the following convention can be used:
\begin{equation}
\label{moulesfrites2}
\begin{array}{rrrccll}
  \gd_i \!\!\!\! & := \!\!\!\! & s_{n-i} \, \colon  & \hat X^n & \to & \hat X^{n+1}, & \quad 0 \leq i \leq n+1,
  \\
\gs_j \!\!\!\! & := \!\!\!\! & d_{n-j} \, \colon  & \hat X^n & \to & \hat X^{n-1}, & \quad 0 \leq j \leq n-1, \\
\tau_n \!\!\!\! & := \!\!\!\!  & t_n  \, \colon  & \hat X^n & \to & \hat X^{n}. &
\end{array}
\end{equation}
Note that
in \eqref{moulesfrites1}
  the last face map $d_{n}$ is not
  used in the construction of the cyclic dual, whereas in \eqref{moulesfrites2} the first zeroth face map $d_0$ is not required:
there is one less codegeneracy on 
 $\gs_i \colon \hat X^{n} \rightarrow \hat X^{n-1}$ than there are faces
 $d_i \colon X^{n}
\rightarrow X^{n-1}$.
Conversely, in both cases there are
not enough degeneracies to derive all
coface maps, which is where the extra degeneracy
$
s_{-1} = t s_n \colon X_n \to X_{n+1}
$
comes into play.
\end{rem}

\begin{rem}
\label{normalised}
Most of the time, both for chain or cochain complexes, we are going to work on the
respective {\em normalised} complexes. In the chain case,  by this we mean the quotient of the original complex by the (acyclic) subcomplex spanned by the
images of the degeneracy maps of the simplicial 
$k$-module given by Eqs.~\eqref{colleoppio}, that is, given by the cokernel of the degeneracy maps $s_j$. 
Similarly, for the cochain complex defined by the cosimplicial $k$-module obtained from Eqs.~\eqref{moulesfrites1}, one considers the intersection of the kernels of the codegeneracies $\gs_j$. In order not to overload our notation we will not distinguish in notation a (co)chain complex from its normalised one.  
  \end{rem}

\begin{lemma}
  \label{Bop}
  The cyclic dual $\hat\cM$ given by \eqref{moulesfrites1} of a cyclic opposite module $(\cM, t)$ over an operad $(\cO, \mu, e)$ with multiplication becomes a cosimplicial $k$-module via the following cofaces and codegeneracies:
  \begin{equation}
\label{journuit}
\gd_i x = e \bullet_i x, \qquad \gs_j x = \mu \bullet_j x, \qquad 0 \leq i \leq n+1, \  0 \leq j \leq n-1,
  \end{equation}
where $x \in \cM(n)$.  
Then, $\cM^\bull := \cM(\bullet)$ becomes a 
  cochain complex the standard way by means of the coboundary $\gd \colon \hat\cM^n \to \hat\cM^{n+1}$, 
    \begin{equation}
    \label{stakker}
  \gd x = \Sum\limits_{i=0}^{n+1} (-1)^i \, e \bullet_i x,
    \end{equation}
    of degree $+1$,
    that is, the alternating sum over all cofaces.
Moreover, by means of
the operator $B \colon \hat\cM^n \to \hat\cM^{n-1}$ of degree $-1$ given by 
\begin{eqnarray}
  \label{humanoid1}
  B x & = & \Sum^{n-1}_{i=0} (-1)^{\sss (n-1)(i-1)} t^i ( \mu \bullet_0 tx)
\end{eqnarray}
on the normalised complex,
one obtains the structure on a mixed complex $(\hat\cM, \gd, B)$, that is, simultaneously a cochain and a chain complex such that
$$
\gd^2 = 0, \quad B^2 = 0, \quad \gd B + B \gd = 0
$$
holds.
  \end{lemma}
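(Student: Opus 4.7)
The statement splits into two parts: first, establishing that \eqref{journuit} gives a cosimplicial structure on $\hat\cM$, which I would deduce directly from cyclic duality applied to the cyclic $k$-module $\cM$; second, verifying the mixed-complex relations $\gd^2 = B^2 = \gd B + B\gd = 0$, which I would obtain by recognising $B$ as Connes' cyclic differential attached to the cocyclic $k$-module $\hat\cM$.

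\textbf{Cosimplicial structure.} By \cite[Prop.~3.5]{Kow:GABVSOMOO}, the formulas \eqref{colleoppio} make $\cM$ into a cyclic $k$-module whose extra degeneracy is explicitly computed in \eqref{cine40} to be $s_{-1} = e\bullet_0 -$. Feeding this into the cyclic duality recipe \eqref{moulesfrites1} yields a cocyclic $k$-module structure on $\hat\cM$ with cofaces $\gd_i := s_{i-1}$ and codegeneracies $\gs_j := d_j$. Unpacking these gives $\gd_i = e\bullet_i -$ for $i\geq 1$, $\gd_0 = s_{-1} = e\bullet_0 -$, and $\gs_j = \mu\bullet_j -$ for $0\leq j\leq n-1$, matching \eqref{journuit}. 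The cosimplicial identities are then automatic, and $\gd^2=0$ for $\gd$ as in \eqref{stakker} is the usual alternating-sum consequence.

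\textbf{Mixed complex.} Using $\mu \bullet_0 tx = d_n x$ from \eqref{colleoppio} and $(-1)^{(n-1)(i-1)} = (-1)^{n-1}(-1)^{(n-1)i}$, formula \eqref{humanoid1} rewrites as
$$
Bx \,=\, (-1)^{n-1} \sum_{i=0}^{n-1} (-1)^{(n-1)i}\, t^i\, d_n x,
$$
where $d_n = \mu\bullet_0 t\,-$ is precisely the ``extra'' codegeneracy of the cocyclic module $\hat\cM$ and $t$ has order $n$ on $\cM(n-1)$. Thus $B = \pm\, N\,\gs_n$ is, on the normalised complex (Remark \ref{normalised}), Connes' cyclic differential associated to $(\hat\cM,\gd_\bull,\gs_\bull,\tau = t^{-1})$. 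The identities $B^2=0$ and $\gd B + B\gd = 0$ are then the standard Connes--Loday--Tsygan mixed-complex identities, valid for any cocyclic $k$-module, which I would invoke directly from \cite{Lod:CH}.

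\textbf{Main obstacle.} The only delicate point is the sign bookkeeping: reconciling the specific sign $(-1)^{(n-1)(i-1)}$ in \eqref{humanoid1} with the conventions used in the literature for Connes' $B$. A self-contained verification would require telescoping the two sums in $\gd B + B\gd$ by invoking \eqref{lagrandebellezza1}--\eqref{lagrandebellezza2}, the opposite-module associativity \eqref{SchlesischeStr}, and the multiplicative relations $\mu\circ_1\mu=\mu\circ_2\mu$ together with $\mu\circ_i e = \mathbb{1}$. Most terms cancel pairwise, and boundary contributions collapse via $t^{n+1} = \id$; the analogous but shorter manipulation handles $B^2=0$.
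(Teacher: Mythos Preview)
Your proposal is correct and follows essentially the same route as the paper: both observe that the cosimplicial (indeed cocyclic) structure is automatic from cyclic duality applied to the cyclic $k$-module of \eqref{colleoppio}, and both identify \eqref{humanoid1} with the standard Connes boundary so that the mixed-complex identities can be invoked from \cite{Lod:CH}. The paper merely runs the identification in the opposite direction, starting from $B = \sum (-1)^{(n-1)i}\tau^i\gs_{-1}$ and deriving \eqref{humanoid1}; one small imprecision in your write-up is that $d_n$ is not literally the extra codegeneracy $\gs_{-1}$ of $\hat\cM$ but rather $t\,\gs_{-1}$, though this shift is absorbed by the cyclic norm and does not affect the argument.
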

  
\begin{proof}
  All statements are essentially automatic by the very construction of the cyclic dual together with the fact that the identities \eqref{colleoppio} define the structure of a simplicial $k$-module, and together with the operator $t$ even the structure of a cyclic $k$-module. We only have to show that the explicit expression for $B$ corresponds to the customary Connes-Rinehart-Tsygan operator for cocyclic $k$-modules.
  Indeed, with respect to the operators defined in \eqref{moulesfrites1} for the cyclic dual, one has, on the normalised complex 
  $$
  Bx =  \Sum^{n-1}_{i=0} (-1)^{\sss (n-1)i } \tau^i (\gs_{-1} x),
  $$
  following the standard way the cyclic boundary is constructed as for example detailed in \cite[2.5.10]{Lod:CH} in its homological version.
 With $\gs_{-1} := \gs_{n-1} \tau_n$, this amounts to
  \begin{eqnarray*}
    Bx &=&  \Sum^{n-1}_{i=0} (-1)^{\sss (n-1)i } \tau^i (\gs_{-1} x)
    \\
    &=&  \Sum^{n-1}_{i=0} (-1)^{\sss (n-1)i } t^{-i} \big(d_{n-1} (t^{-1} x)\big)
     \\
     &=&  \Sum^{n-1}_{i=0} (-1)^{\sss (n-1)i } t^{-i} (\mu \bullet_{n-1} t^{-1} x)
        \\
        &=&  \Sum^{n-1}_{i=0} (-1)^{\sss (n-1)i } t^{n-i}(\mu \bullet_{n-1} t^{n} x)
          \ \ = \ \ \Sum^{n-1}_{i=0} (-1)^{\sss (n-1)(i-1) } t^i (\mu \bullet_0 t x)
             \end{eqnarray*}
 by means of $t^{n+1}x = x$ in degree $n$ and by re-indexing $i \mapsto n-i-1$ in the last line; hence, Eq.~\eqref{humanoid1} as claimed.
  \end{proof}
We now have all the necessary ingredients at hand to analyse the relationship between cyclic duals and BV algebras.

\section{Cyclic duality as tree reflexion}

\subsection{Cosimplicial and cocyclic compatibility}
Let $(\cO, \mu, e)$ be an operad with multiplication in the sense specified in \S\ref{pamukkale1}. Studying the proof in \cite{Ger:TCSOAAR} why the cohomology $H(\cO)$ becomes a Gerstenhaber algebra with respect to the bracket \eqref{bracket-po-packet} and the cup product \eqref{cupye}, one observes that this essentially hinges upon the sort of associativity of the operadic vertical composition as~in Eqs.~\eqref{danton}, and, in particular, upon the respective relations for the special elements $\mu \in \cO(2)$ and $e \in \cO(0)$.

For example, from Eqs.~\eqref{danton} one deduces that for the multiplication element $\mu \in \cO(2)$ and
$\phi \in \cO(p)$, $\psi \in \cO(q)$, as well as $1 \leq j \leq p$, 
\begin{equation}
  \label{useless1}
  \hspace*{1cm}
  \begin{array}{rclll}
    \mu \circ_2 (\phi \circ_j \psi) &=&  (\mu \circ_2 \phi) \circ_{j+1} \psi, &
    \\
    (\phi \circ_j \psi) \circ_i \mu &=&  (\phi \circ_i \mu) \circ_{j+1} \psi &  \mbox{if} \ \ 1 \leq i \leq j-1,
    \\
    (\phi \circ_j \psi) \circ_i \mu &=&  \phi \circ_j (\psi \circ_{i-j+1} \mu) &  \mbox{if} \ \ j \leq i \leq q+j-1,
    \\
    (\phi \circ_j \psi) \circ_i \mu &=&  (\phi \circ_{i-q+1} \mu) \circ_{j} \psi &  \mbox{if} \ \ q+j \leq i \leq p+q-1,
    \\
\mu \circ_1     (\phi \circ_j \psi)  &=&  (\mu \circ_1 \phi) \circ_{j} \psi, &  
  \end{array}
\end{equation}
holds, along with
\begin{equation}
  \label{useless2}
\hspace*{-2.18cm}
  \begin{array}{rclll}
     (\phi \circ_{j-1}  \mu) \circ_j \psi &=&  \phi \circ_{j-1} (\mu \circ_2 \psi) &  \mbox{if} \ \ 2 \leq j \leq p,
    \\
    (\phi \circ_j \mu) \circ_j \psi &=&  \phi \circ_j (\mu \circ_1 \psi) &  \mbox{if} \ \ 1 \leq j \leq p,
    \\
    (\mu \circ_1 \phi) \circ_{p+1} \psi &=&  (\mu \circ_2 \psi) \circ_1 \phi. &  
  \end{array}
\end{equation}
Analogous relations can be written down for the unit element $e \in \cO(0)$.

These identities now do not look too exciting stated this way, but become more illuminating if recalling from \S\ref{pamukkale1} the cosimplicial structure of the sequence $\cO$ of $k$-modules, {\em i.e.},
\begin{equation}
  \label{sanf}
\gd_0 \phi = \mu \circ_2 \phi, \quad \gd_i \phi = \phi \circ_i \mu, \quad \gd_{p+1} \phi = \mu \circ_1 \phi, \quad \gs_j \phi  = \phi \circ_{j+1} e, 
  \end{equation}
where $1 \leq i \leq p$ and $0 \leq j \leq p-1$. Rewriting the relations \eqref{useless1}--\eqref{useless2} therefore in terms of the identities \eqref{sanf} reveals a compatibility between the cofaces and codegeneracies with the operadic composition, which is formalised (in the same order as the above relations) in the subsequent Definition \ref{compitompi}, and which will be the starting point for the following considerations. The upshot of these will be that {\em any} operad, not necessarily with multiplication, but nevertheless compatible in this sense with a given cosimplicial structure, induces a Gerstenhaber structure on cohomology, and in certain cases this structure is even that of a BV algebra. Hence, combining \eqref{sanf} with \eqref{useless1} \& \eqref{useless2}, let us define:

\begin{definition}
  \label{compitompi}
  Let $\cP$ be a (nonsymmetric) operad in $\kmod$, not necessarily with multiplication.
  \begin{enumerate}
\compactlist{99}
  \item
    The operad $\cP$ will be called {\em cosimplicial-compatible} if it is at the same time a cosimplicial
  object $(\cP^\bull, \gd_\bull, \gs_\bull)$ in $\kmod$ such that the
(partial, vertical) operadic composition is compatible with cofaces and codegeneracies in the following way: 
 for $x \in \cM(p)$ and $y \in \cM(q)$,
  \begin{eqnarray}
    \label{besser1}
 \gd_i (x \circ_j y) &=& 
\begin{cases}
  (\gd_i x) \circ_{j+1} y & \mbox{if} \  \,  0 \leq i \leq j-1, \\
x \circ_j (\gd_{i - j+1} y) & \mbox{if} \ \,  j  \leq i \leq j+q-1, \\
(\gd_{i-q+1} x) \circ_j y & \mbox{if} \ \, j+q \leq i \leq  p+q,
\end{cases}
  \end{eqnarray}
is supposed to hold for $1 \leq j \leq p$, along with
    \begin{eqnarray}
    \label{besser2}
 (\gd_i x) \circ_j y &=& 
 \begin{cases}
   x \circ_{j-1} (\gd_0 y) & \mbox{if} \  \,  i = j-1, \,\, 2 \leq j \leq p+1,
   \\
   x \circ_{j} (\gd_{q+1} y)   & \mbox{if} \ \,  i= j, \,\, 1 \leq j \leq p,
   \\
   (\gd_0 y) \circ_{1} x & \mbox{if} \  \,  i= j= p+1,
\end{cases}
\end{eqnarray}
  and, as far as the codegeneracies are concerned, 
    \begin{eqnarray}
    \label{besser3}
\gs_i (x \circ_j y) &=& 
\begin{cases}
  (\gs_i x) \circ_{j-1} y & \mbox{if} \  \,  0 \leq i \leq j-2,
  \\
  x \circ_j (\gs_{i - j +1} y) & \mbox{if} \ \,  j-1
  \leq i \leq j + q - 2, \\
(\gs_{i-q+1} x) \circ_{j} y & \mbox{if} \ \, j+q-1 \leq i \leq  p+q -2,
\end{cases}
    \end{eqnarray}
    is required to be true, where
$2 \leq j \leq p$ in the first line, $1 \leq j \leq p$ in the second, and     $1 \leq j \leq p-1$ in the third. 
  \item
    A cosimplicial-compatible operad $\cP$ is called {\em cocyclic-compatible} if
    it is a cocyclic object $(\cP^\bull, \gd_\bull, \gs_\bull, \tau)$ in $\kmod$ that is cosimplicial-compatible as above and, in addition,
    the partial operadic composition is compatible with the cocyclic operator, {\em i.e.},  if 
  \begin{eqnarray}
\label{unschoen}
    \tau( x \circ_j y) & = &
  \begin{cases}
    \tau y \circ_q \tau x & \mbox{if} \ j = 1, 
\\
    \tau x \circ_{j-1} y & \mbox{if} \ 2 \leq j \leq p,
\end{cases}
\end{eqnarray}
holds as well. 
\end{enumerate}
  \end{definition}

\begin{rem}
In case $y \in \cP(0)$, as before the middle relations in Eqs.~\eqref{besser1} \& \eqref{besser3} have to be ignored.
Together with $\tau^{n+1} = \id$ in degree $n$, the relations \eqref{unschoen} precisely mean that $(\cP, \tau)$ is a cyclic operad as defined (with an opposite convention) in \cite{GetKap:COACH}, but not necessarily a cyclic operad with multiplication as introduced in \cite{Men:BVAACCOHA}. Hence, a cocyclic-compatible operad $\cP$ in $\kmod$ is a cyclic operad with a cosimplicial structure $(\cP, \gd_\bull, \gs_\bull)$ that is compatible with the operadic composition in the sense of Eqs.~\eqref{besser1}--\eqref{besser3}.
\end{rem}

\subsection{Cosimplicial and cocyclic compatibility for the cyclic dual}
\label{mendelssohn}
With our main objective in mind, let us apply Definition \ref{compitompi} to the cosimplicial and cocyclic $k$-module $(\hat\cM, \gd_\bull, \gs_\bull, \tau)$ given by the cyclic dual of a cyclic opposite module $(\cM, t)$ over an operad with multiplication $(\cO, \mu, e)$ as described in Lemma \ref{Bop}. By means of Eqs.~\eqref{journuit}, the cosimplicial compatibility conditions \eqref{besser1}--\eqref{besser3}
can be rewritten in terms of the multiplication structure $(\mu, e)$ of $\cO$ acting on $(\cM, t)$.
%
 More precisely, with $x \in \hat\cM(p)$ and $y \in \hat\cM(q)$, condition \eqref{besser1} explicitly comes out as:
  \begin{eqnarray}
\label{opere1}
e \bullet_i (x \circ_j y) &=& 
\begin{cases}
  (e \bullet_i x) \circ_{j+1} y & \mbox{if} \  \,  0 \leq i \leq j-1, \\
x \circ_j (e \bullet_{i - j+1} y) & \mbox{if} \ \,  j  \leq i \leq j+q-1, \\
(e \bullet_{i-q+1} x) \circ_j y & \mbox{if} \ \, j+q \leq i \leq  p+q,
\end{cases}
 \end{eqnarray}
 for $1 \leq j \leq p$, whereas condition \eqref{besser2} reads 
\begin{eqnarray}
\label{opere2}
(e \bullet_i x) \circ_j y &=& 
\begin{cases}
  x \circ_{j-1} (e \bullet_0 y)  & \mbox{if} \ \, i = j-1, \,\, 2 \leq j \leq p+1,
  \\
  x \circ_{j} (e \bullet_{q+1} y)   & \mbox{if} \ \,  i= j, \,\, 1 \leq j \leq p,
  \\
(e \bullet_0 y) \circ_1 x & \mbox{if} \ \, i = j = p+1.
\end{cases}
\end{eqnarray}
Finally, condition 
 \eqref{besser3} can be stated by writing
    \begin{eqnarray}
\label{opere3}
\mu \bullet_i (x \circ_j y) &=& 
\begin{cases}
  (\mu \bullet_i x) \circ_{j-1} y & \mbox{if} \  \, 0 \leq i \leq j-2,
  \\
  x \circ_j (\mu \bullet_{i - j +1} y) & \mbox{if} \ \,  0 \leq j-1
  \leq i \leq j + q - 2, 
  \\
(\mu \bullet_{i-q+1} x) \circ_{j} y & \mbox{if} \ \, j+q-1 \leq i \leq  p+q -2,
\end{cases}
    \end{eqnarray}
    where
    $2 \leq j \leq p$ in line one, $1 \leq j \leq p$ in line two, and $1 \leq j \leq p-1$ in line three.
Again, the middle lines in Eqs.\ \eqref{opere1} \& \eqref{opere3} are not present in case $q = 0$, that is, for $y$ a zero cochain.
    From Eqs.~\eqref{opere1}--\eqref{opere2}, we can derive a couple of identities, which will be stated in a partially redundant manner but nevertheless quite convenient to have at hand in explicit computations: in~fact, 
\begin{eqnarray}
\label{opere4}
 (e \bullet_i x) \circ_j y &=& 
 \begin{cases}
   (e \bullet_{q+1} y) \circ_{q+1} x & \mbox{if} \  \,  i=0, \, j=1,
   \\
   e \bullet_i (x \circ_{j-1} y) & \mbox{if} \  \,  0 \leq i \leq j-2, \, 2 \leq j \leq p+1,
   \\
   x \circ_{j-1} (e \bullet_0  y) & \mbox{if} \  \,  i = j-1, \, 2 \leq j \leq p+1,
   \\
   x \circ_{j} (e \bullet_{q+1} y)   & \mbox{if} \ \,  i= j \neq p+1,
   \\
(e \bullet_0 y) \circ_{1} x & \mbox{if} \  \,  i= j= p+1,
   \\
e \bullet_{i+q-1} (x \circ_j y) & \mbox{if} \ \, j+1 \leq i \leq  p+1,
\end{cases}
\end{eqnarray}
along with
\begin{eqnarray}
\label{opere5}
x \circ_j (e \bullet_i y)  &=& 
\begin{cases}
  (e \bullet_j x) \circ_{j+1} y & \mbox{if} \  \,  i=0, 
  \\
  e \bullet_{i+j-1}  (x \circ_{j} y)  & \mbox{if} \ \,  1 \leq i \leq q,
  \\
(e \bullet_j x) \circ_j y & \mbox{if} \ \, i = q+1, 
\end{cases}
\end{eqnarray}
for $1 \leq j \leq p$ everywhere.
Moreover, Eq.~\eqref{opere3} can be restated as:
    \begin{equation}
  \begin{array}{rcll}
\label{opere6}
x \circ_j (\mu \bullet_i y) &=& \mu \bullet_{i+j-1} (x \circ_j y), \ 
\quad 1 \leq i \leq q-1, \ \  1 \leq j \leq p,
\\[2mm]
(\mu \bullet_i x) \circ_j y &=& 
\begin{cases}
  \mu \bullet_i (x  \circ_{j+1} y) & \mbox{if} \  \,  0 \leq i \leq j-1, \ \ 1 \leq j \leq p-1,
  \\
\mu \bullet_{i + q -1} ( x \circ_{j} y) & \mbox{if} \ \,  1 \leq j  \leq i \leq p -1.
\end{cases}
  \end{array}
  \end{equation}
    To conclude, the cyclic operad condition \eqref{unschoen} with respect to the cyclic operator $\tau = t^{-1}$ can be rewritten as
      \begin{eqnarray}
\label{unschoen2}
    t( x \circ_j y) & = &
  \begin{cases}
t x \circ_{j+1} y & \mbox{if} \ 1 \leq j \leq p-1,
    \\
        t y \circ_1 t x & \mbox{if} \ j = p, 
     \end{cases}
      \end{eqnarray}
      in terms of the original cyclic operator $t$ on $\cM$.

      \subsection{Graphical representation}
      In contrast to the more general relations in Definition \ref{compitompi}, the identities \eqref{opere1}--\eqref{opere6} can be pictured in quite an instructive manner, which will also explain the term {\em tree reflexion}. As seen in Figure~\ref{tratto} on page~\pageref{bretzel}, an opposite module $\cM$ over an operad $\cO$ could be depicted as upside-down trees to which (normally oriented) trees, as elements in the operad, are plugged from below, in order to display the opposite action.

      If the cyclic dual $\hat\cM$ is itself an operad, one might think of mirroring this picture along a horizontal line so that an element in the opposite module becomes a tree with, in turn, the operad acting from above by upside-down trees. This, in view of what was said in the introduction, is justified by the idea that for the {\em full picture} one should rather pass to the cyclic duals of {\em both} $\cM$ and $\cO$.

      Applying such a horizontal reflexion, we can illustrate, for example, the first line in the relations \eqref{opere3}: 

      \begin{center}
        \scalebox{0.5}{
          
          \begin{tikzpicture}
	    \node [style= ] (1) at (0.5, -.5) {\Huge $=$};
            \node [fill=white, draw=none, shape=rectangle] (2) at (-1.5, -.5) {\huge $p$};
	    \node [fill=white, draw=none, shape=rectangle] (3) at (-3.5, -.5) {\huge $j$};
	   \node [fill=white, draw=none, shape=rectangle] (4) at (-6, -.5) {\huge $i+1$};
	\node [fill=white, draw=none, shape=rectangle] (5) at (-8, -.5) {\huge $i$};
	\node [fill=white, draw=none, shape=rectangle] (6) at (-10, -.5) {\huge $1$};
	\node [fill=white, draw=none, shape=rectangle] (7) at (-9, -.5) {\Large $\cdots$};
	\node [ ] (8) at (-4.75, -.5) {\Large $\cdots$};
	\node [ ] (9) at (-2.5, -.5) {\Large $\cdots$};
	\node [fill=white, draw=none, shape=rectangle] (10) at (-6, -3) {\Huge $x$};
	\node [ ] (11) at (-6, -4) {};
	\node [fill=white, draw=none, shape=rectangle] (12) at (-3.5, 1) {\Huge $y$};
	\node [ ] (14) at (-3.5, 3) {\Large $\cdots$};
	\node [fill=white, draw=none, shape=rectangle] (17) at (-8, 3) {};
	\node [fill=white, draw=none, shape=rectangle] (18) at (-6, 3) {};
	\node [ ] (20) at (-7, 5.5) {};
	\node [fill=white, draw=none, shape=rectangle] (21) at (-7, 4.25) {\Huge $\mu$};
	\node [fill=white, draw=none, shape=rectangle] (22) at (-4.75, 3) {\huge $1$};
	\node [fill=white, draw=none, shape=rectangle] (23) at (-2.25, 3) {\huge $q$};
	\node [ ] (24) at (-10, 3) {};
	\node [ ] (25) at (-10, -4) {};
	\node [ ] (26) at (-1.25, 3) {};
	\node [ ] (27) at (-1.25, -4) {};
	\node [fill=white, draw=none, shape=rectangle] (29) at (12, -0.5) {\huge $p$};
	\node [fill=white, draw=none, shape=rectangle] (30) at (10, -0.5) {\huge $j-1$};
	\node [fill=white, draw=none, shape=rectangle] (31) at (6.5, -0.5) {\huge $i+1$};
	\node [fill=white, draw=none, shape=rectangle] (32) at (4.5, -0.5) {\huge $i$};
	\node [fill=white, draw=none, shape=rectangle] (33) at (2.5, -0.5) {\huge $1$};
	\node [ ] (34) at (3.5, -0.5) {\Large $\cdots$};
	\node [ ] (35) at (7.6, -0.5) {\Large $\cdots$};
	\node [ ] (36) at (11.25, -0.5) {\Large $\cdots$};
	\node [fill=white, draw=none, shape=rectangle] (37) at (6.5, -3) {\Huge $x$};
	\node [ ] (38) at (6.5, -4) {};
	\node [fill=white, draw=none, shape=rectangle] (39) at (10, 1) {\Huge $y$};
	\node [ ] (41) at (10, 3) {\Large $\cdots$};
	\node [ ] (44) at (5.5, 3) {};
	\node [fill=white, draw=none, shape=rectangle] (45) at (5.5, 1) {\Huge $\mu$};
	\node [fill=white, draw=none, shape=rectangle] (46) at (8.75, 3) {\huge $1$};
	\node [fill=white, draw=none, shape=rectangle] (47) at (11.25, 3) {\huge $q$};
	\node [ ] (48) at (2.5, 3) {};
	\node [ ] (49) at (2.5, -4) {};
	\node [ ] (50) at (7.5, 3) {};
	\node [ ] (51) at (7.5, -4) {};
%
	\draw [-, dashed, line width=1.2pt] (7) to (10);
	\draw [-, dashed, line width=1.2pt] (8) to (10);
	\draw [-, dashed, line width=1.2pt] (9) to (10);
	\draw [-, dashed, line width=1.2pt] (14) to (12);
	\draw [-, fill=white, draw=black, line width=1.2pt] (12) to (3);
	\draw [-, fill=white, draw=black, line width=1.2pt] (3) to (10);
	\draw [-, fill=white, draw=black, line width=1.2pt] (10) to (11);
	\draw [-, fill=white, draw=black, line width=1.2pt] (2) to (10);
	\draw [-, fill=white, draw=black, line width=1.2pt] (4) to (10);
	\draw [-, fill=white, draw=black, line width=1.2pt] (5) to (10);
	\draw [-, fill=white, draw=black, line width=1.2pt] (6) to (10);
	\draw [-, fill=white, draw=black, line width=1.2pt] (17) to (5);
	\draw [-, fill=white, draw=black, line width=1.2pt] (18) to (4);
	\draw [-, fill=white, draw=black, line width=1.2pt] (20) to (21);
	\draw [-, fill=white, draw=black, line width=1.2pt] (21) to (17);
	\draw [-, fill=white, draw=black, line width=1.2pt] (21) to (18);
	\draw [-, fill=white, draw=black, line width=1.2pt] (23) to (12);
	\draw [-, fill=white, draw=black, line width=1.2pt] (22) to (12);
	\draw [-, draw=black, line width=3pt, bend right=15] (24.center) to (25.center);
	\draw [-, draw=black, line width=3pt, bend left=15] (26.center) to (27.center);
	\draw [-, dashed, line width=1.2pt] (34) to (37);
	\draw [-, dashed, line width=1.2pt] (35) to (37);
	\draw [-, dashed, line width=1.2pt] (36) to (37);
	\draw [-, dashed, line width=1.2pt] (41) to (39);
	\draw [-, fill=white, draw=black, line width=1.2pt] (39) to (30);
	\draw [-, fill=white, draw=black, line width=1.2pt] (30) to (37);
	\draw [-, fill=white, draw=black, line width=1.2pt] (37) to (38.center);
	\draw [-, fill=white, draw=black, line width=1.2pt] (29) to (37);
	\draw [-, fill=white, draw=black, line width=1.2pt] (31) to (37);
	\draw [-, fill=white, draw=black, line width=1.2pt] (32) to (37);
	\draw [-, fill=white, draw=black, line width=1.2pt] (33) to (37);
	\draw [-, fill=white, draw=black, line width=1.2pt] (45) to (32);
	\draw [-, fill=white, draw=black, line width=1.2pt] (45) to (31);
	\draw [-, fill=white, draw=black, line width=1.2pt] (44.center) to (45);
	\draw [-, fill=white, draw=black, line width=1.2pt] (47) to (39);
	\draw [-, fill=white, draw=black, line width=1.2pt] (46) to (39);
	\draw [-, draw=black, line width=3pt, bend right=15] (48.center) to (49.center);
	\draw [-, draw=black, line width=3pt, bend left=15] (50.center) to (51.center);
\end{tikzpicture}
        }
        \medskip
        
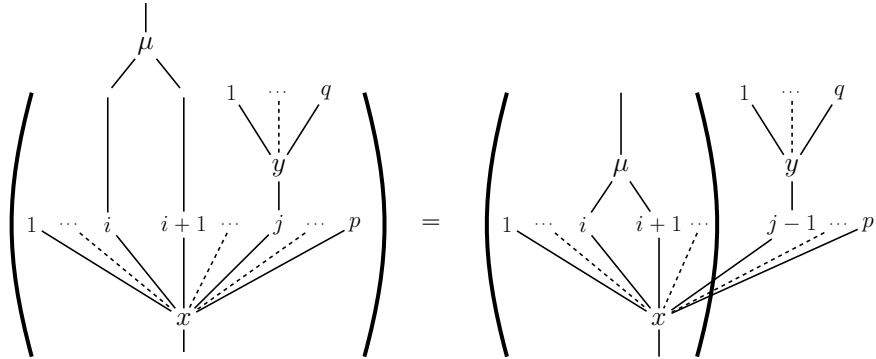
\captionof{figure}{The relation $\mu \bullet_i ( x \circ_j y) = (\mu \bullet_i x) \circ_{j-1} y$.}
        \end{center}
     This figure motivates to call the depicted relation a {\em parallel associativity}, while an analogous picture for the middle line in \eqref{opere3} would be rightly called a {\em sequential associativity}, as is common for the operadic composition. However, it is more interesting to represent the situation at the borders, {\em i.e.}, for the cases $i = j-1$ and $i = j+q-1$: the first one takes into account that the operation $\mu \bullet_0 -$ cannot be depicted by an (upside-down) binary tree anymore but rather by a (normally oriented) truncated tree (which is {\em not} the unit element $e$ in the multiplication structure of the operad). More precisely: 
     \begin{center}
        \scalebox{0.5}{
          
          \begin{tikzpicture}
	    \node [style= ] (1) at (0.5, -.5) {\Huge $=$};
            \node [fill=white, draw=none, shape=rectangle] (2) at (-1.5, -.5) {\huge $p$};
	    \node [fill=white, draw=none, shape=rectangle] (3) at (-3.5, -.5) {\huge $j$};
	   \node [fill=white, draw=none, shape=rectangle] (4) at (-6, -.5) {\huge $j-1$};
	\node [fill=white, draw=none, shape=rectangle] (5) at (-7.5, -.5) {\Large $\cdots$};
	\node [fill=white, draw=none, shape=rectangle] (6) at (-8.75, -.5) {\huge $1$};
	\node [ ] (9) at (-2.5, -.5) {\Large $\cdots$};
	\node [fill=white, draw=none, shape=rectangle] (10) at (-6, -3) {\Huge $x$};
	\node [ ] (11) at (-6, -4) {};
	\node [fill=white, draw=none, shape=rectangle] (12) at (-3.5, 1) {\Huge $y$};
	\node [ ] (14) at (-3.5, 3) {\Large $\cdots$};
	\node [fill=white, draw=none, shape=rectangle] (17) at (-8, 3) {};
	\node [fill=white, draw=none, shape=rectangle] (18) at (-6, 3) {};
	\node [ ] (20) at (-5.5, 5.5) {};
	\node [fill=white, draw=none, shape=rectangle] (21) at (-5.5, 4.25) {\Huge $\mu$};
	\node [fill=white, draw=none, shape=rectangle] (22) at (-4.75, 3) {\huge $1$};
	\node [fill=white, draw=none, shape=rectangle] (23) at (-2.25, 3) {\huge $q$};
	\node [ ] (24) at (-9, 3) {};
	\node [ ] (25) at (-9, -4) {};
	\node [ ] (26) at (-1.25, 3) {};
	\node [ ] (27) at (-1.25, -4) {};
	\node [fill=white, draw=none, shape=rectangle] (29) at (11.5, -0.5) {\huge $p$};
	\node [fill=white, draw=none, shape=rectangle] (30) at (8.5, -0.5) {\huge $j$};
	\node [fill=white, draw=none, shape=rectangle] (31) at (5, -0.5) {\huge $j-1$};
	\node [fill=white, draw=none, shape=rectangle] (32) at (3, -0.5) {\Large $\cdots$};
	\node [fill=white, draw=none, shape=rectangle] (33) at (1.5, -0.5) {\huge $1$};
	\node [ ] (36) at (9.75, -0.5) {\Large $\cdots$};
	\node [fill=white, draw=none, shape=rectangle] (37) at (5, -3) {\Huge $x$};
	\node [ ] (38) at (5, -4) {};
	\node [fill=white, draw=none, shape=rectangle] (39) at (8.5, 1) {\Huge $y$};
	\node [ ] (41) at (8.5, 3) {\Large $\cdots$};
	\node [fill=white, draw=none, shape=rectangle] (46) at (7.25, 3) {\huge $1$};
	\node [fill=white, draw=none, shape=rectangle] (47) at (9.75, 3) {\huge $q$};
	\node [ ] (48) at (6.75, 4.75) {};
	\node [ ] (49) at (6.75, -4) {};
	\node [ ] (50) at (10, 4.75) {};
	\node [ ] (51) at (10, -4) {};
\node [fill=white, draw=none, shape=rectangle] (52) at (7.25, 4.75) {\huge };
	\draw [-, dashed, line width=1.2pt] (9) to (10);
	\draw [-, dashed, line width=1.2pt] (14) to (12);
	\draw [-, fill=white, draw=black, line width=1.2pt] (12) to (3);
	\draw [-, fill=white, draw=black, line width=1.2pt] (3) to (10);
	\draw [-, fill=white, draw=black, line width=1.2pt] (10) to (11.center);
	\draw [-, fill=white, draw=black, line width=1.2pt] (2) to (10);
	\draw [-, fill=white, draw=black, line width=1.2pt] (4) to (10);
	\draw [dashed, fill=white, draw=black, line width=1.2pt] (5) to (10);
	\draw [-, fill=white, draw=black, line width=1.2pt] (6) to (10);
	\draw [-, fill=white, draw=black, line width=1.2pt] (18) to (4);
	\draw [-, fill=white, draw=black, line width=1.2pt] (20.center) to (21);
	\draw [-, fill=white, draw=black, line width=1.2pt] (21) to (22);
	\draw [-, fill=white, draw=black, line width=1.2pt] (21) to (18);
	\draw [-, fill=white, draw=black, line width=1.2pt] (23) to (12);
	\draw [-, fill=white, draw=black, line width=1.2pt] (22) to (12);
	\draw [-, draw=black, line width=3pt, bend right=15] (24.center) to (25.center);
	\draw [-, draw=black, line width=3pt, bend left=15] (26.center) to (27.center);
	\draw [-, dashed, line width=1.2pt] (36) to (37);
	\draw [-, dashed, line width=1.2pt] (41) to (39);
	\draw [-, fill=white, draw=black, line width=1.2pt] (39) to (30);
	\draw [-, fill=white, draw=black, line width=1.2pt] (30) to (37);
	\draw [-, fill=white, draw=black, line width=1.2pt] (37) to (38);
	\draw [-, fill=white, draw=black, line width=1.2pt] (29) to (37);
	\draw [-, fill=white, draw=black, line width=1.2pt] (31) to (37);
	\draw [dashed, fill=white, draw=black, line width=1.2pt] (32) to (37);
	\draw [-, fill=white, draw=black, line width=1.2pt] (33) to (37);
	\draw [-, fill=white, draw=black, line width=1.2pt] (47) to (39);
	\draw [-, fill=white, draw=black, line width=1.2pt] (46) to (39);
	\draw [-, draw=black, line width=3pt, bend right=15] (48.center) to (49.center);
	\draw [-, draw=black, line width=3pt, bend left=15] (50.center) to (51.center);
        	\draw [Bracket-, fill=white, draw=black, line width=1.2pt] (52) to (46);
\end{tikzpicture}
        }
        \medskip
        
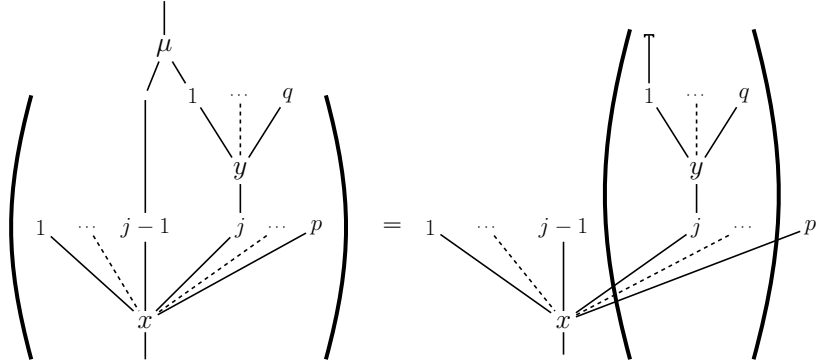
\captionof{figure}{The relation $\mu \bullet_{j-1} ( x \circ_j y) = x \circ_j (\mu \bullet_0 y)$.}
        \end{center}
%
%
    \noindent The second border case mentioned above, where $i = j+q-1$, would look like:
     \smallskip
     \begin{center}
        \scalebox{0.5}{
          \begin{tikzpicture}
	    \node [style= ] (1) at (0.2, -.5) {\Huge $=$};
            \node [fill=white, draw=none, shape=rectangle] (2) at (-1.5, -.5) {\huge $p$};
	    \node [fill=white, draw=none, shape=rectangle] (3) at (-3.5, -.5) {\huge $j+1$};
	   \node [fill=white, draw=none, shape=rectangle] (4) at (-6, -.5) {\huge $j$};
	\node [fill=white, draw=none, shape=rectangle] (5) at (-7.5, -.5) {\Large $\cdots$};
	\node [fill=white, draw=none, shape=rectangle] (6) at (-8.75, -.5) {\huge $1$};
	\node [ ] (9) at (-2.25, -.5) {\Large $\cdots$};
	\node [fill=white, draw=none, shape=rectangle] (10) at (-6, -3) {\Huge $x$};
	\node [ ] (11) at (-6, -4) {};
	\node [fill=white, draw=none, shape=rectangle] (12) at (-6, 1) {\Huge $y$};
	\node [ ] (14) at (-6, 3) {\Large $\cdots$};
	\node [fill=white, draw=none, shape=rectangle] (17) at (-8, 3) {};
	\node [fill=white, draw=none, shape=rectangle] (18) at (-3.5, 3) {};
	\node [ ] (20) at (-4.1, 5.5) {};
	\node [fill=white, draw=none, shape=rectangle] (21) at (-4.1, 4.25) {\Huge $\mu$};
	\node [fill=white, draw=none, shape=rectangle] (22) at (-7.25, 3) {\huge $1$};
	\node [fill=white, draw=none, shape=rectangle] (23) at (-4.75, 3) {\huge $q$};
	\node [ ] (24) at (-9, 3) {};
	\node [ ] (25) at (-9, -4) {};
	\node [ ] (26) at (-1.25, 3) {};
	\node [ ] (27) at (-1.25, -4) {};
	\node [fill=white, draw=none, shape=rectangle] (29) at (8, -0.5) {\huge $p$};
	\node [fill=white, draw=none, shape=rectangle] (30) at (5.5, -0.5) {\huge $j+1$};
	\node [fill=white, draw=none, shape=rectangle] (31) at (4, -0.5) {\huge $j$};
	\node [fill=white, draw=none, shape=rectangle] (32) at (2.5, -0.5) {\huge \Large $\cdots$};
	\node [fill=white, draw=none, shape=rectangle] (33) at (1.5, -0.5) {\huge $1$};
	\node [ ] (36) at (6.75, -0.5) {\Large $\cdots$};
	\node [fill=white, draw=none, shape=rectangle] (37) at (4, -3) {\Huge $x$};
	\node [ ] (38) at (4, -4) {};
	\node [fill=white, draw=none, shape=rectangle] (39) at (4.75, 3) {\Huge $y$};
	\node [ ] (41) at (4.75, 4.5) {\Large $\cdots$};
	\node [fill=white, draw=none, shape=rectangle] (45) at (4.75, 1) {\Huge $\mu$};
	\node [fill=white, draw=none, shape=rectangle] (46) at (3.75, 4.75) {\huge $1$};
	\node [fill=white, draw=none, shape=rectangle] (47) at (5.75, 4.75) {\huge $q$};
	\node [ ] (48) at (1.5, 2) {};
	\node [ ] (49) at (1.5, -4) {};
	\node [ ] (50) at (8.25, 2) {};
	\node [ ] (51) at (8.25, -4) {};
\node [fill=white, draw=none, shape=rectangle] (52) at (7.25, 4.75) {\huge };
 	\draw [-, dashed, line width=1.2pt] (9) to (10);
	\draw [-, dashed, line width=1.2pt] (14) to (12);
	\draw [-, fill=white, draw=black, line width=1.2pt] (12) to (4);
	\draw [-, fill=white, draw=black, line width=1.2pt] (3) to (10);
	\draw [-, fill=white, draw=black, line width=1.2pt] (10) to (11);
	\draw [-, fill=white, draw=black, line width=1.2pt] (2) to (10);
	\draw [-, fill=white, draw=black, line width=1.2pt] (4) to (10);
	\draw [dashed, fill=white, draw=black, line width=1.2pt] (5) to (10);
	\draw [-, fill=white, draw=black, line width=1.2pt] (6) to (10);
	\draw [-, fill=white, draw=black, line width=1.2pt] (18) to (3);
	\draw [-, fill=white, draw=black, line width=1.2pt] (20) to (21);
	\draw [-, fill=white, draw=black, line width=1.2pt] (21) to (23);
	\draw [-, fill=white, draw=black, line width=1.2pt] (21) to (18);
	\draw [-, fill=white, draw=black, line width=1.2pt] (23) to (12);
	\draw [-, fill=white, draw=black, line width=1.2pt] (22) to (12);
	\draw [-, draw=black, line width=3pt, bend right=15] (24.center) to (25.center);
	\draw [-, draw=black, line width=3pt, bend left=15] (26.center) to (27.center);
	\draw [-, dashed, line width=1.2pt] (36) to (37);
	\draw [-, dashed, line width=1.2pt] (41) to (39);
	\draw [-, fill=white, draw=black, line width=1.2pt] (39) to (45);
	\draw [-, fill=white, draw=black, line width=1.2pt] (30) to (37);
	\draw [-, fill=white, draw=black, line width=1.2pt] (37) to (38);
	\draw [-, fill=white, draw=black, line width=1.2pt] (29) to (37);
	\draw [-, fill=white, draw=black, line width=1.2pt] (31) to (37);
	\draw [dashed, fill=white, draw=black, line width=1.2pt] (32) to (37);
	\draw [-, fill=white, draw=black, line width=1.2pt] (33) to (37);
	\draw [-, fill=white, draw=black, line width=1.2pt] (47) to (39);
	\draw [-, fill=white, draw=black, line width=1.2pt] (46) to (39);
        \draw [-, fill=white, draw=black, line width=1.2pt] (45) to (31);
        \draw [-, fill=white, draw=black, line width=1.2pt] (45) to (30);
	\draw [-, draw=black, line width=3pt, bend right=15] (48.center) to (49.center);
	\draw [-, draw=black, line width=3pt, bend left=15] (50.center) to (51.center);
\end{tikzpicture}
        }
        \medskip
        
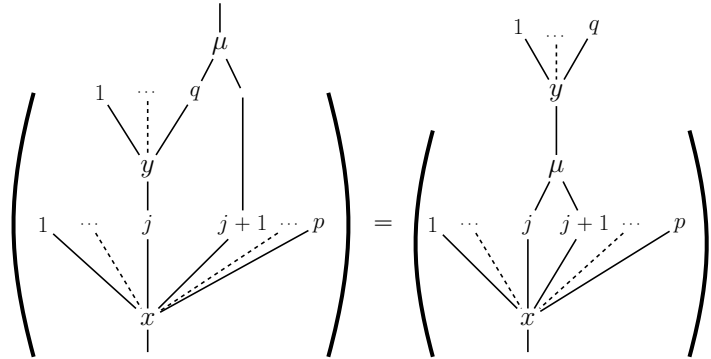
\captionof{figure}{The relation $\mu \bullet_{j+q-1} ( x \circ_j y) = (\mu \bullet_j x) \circ_j y$.}
        \end{center}
     Observe that on the right hand side there is no further associativity that allows to, {\em e.g.}, isolate the composition of the first tree from above to the upside-down one given by $\mu$, as this would rather involve the notion of PROPs.

     Analogous diagrams can be drawn with respect to the unit element $e \in \cO(0)$, which, in this spirit, acts from above as a truncated upside-down tree.
     For ex\-ample, the first line of the relations in \eqref{opere1} looks as expected:
     \begin{center}
       \scalebox{0.6}
                {
\begin{tikzpicture}
		\node [fill=white, draw=none, shape=rectangle] (0) at (-0.1, 0) {\huge $=$};
		\node [fill=white, draw=none, shape=rectangle] (1) at (-2, 0) {$\cdots$};
		\node [fill=white, draw=none, shape=rectangle] (2) at (-2.8, 0) {\LARGE $j$};
		\node [fill=white, draw=none, shape=rectangle] (3) at (-4.75, 0) {$\cdots$};
		\node [fill=white, draw=none, shape=rectangle] (4) at (-5, -2) {\huge $x$};
		\node [fill=white, draw=none, shape=rectangle] (6) at (-7, 0) {\LARGE $i-1$};
		\node [fill=white, draw=none, shape=rectangle] (7) at (-8.2, 0) {$\cdots$};
		\node [fill=white, draw=none, shape=rectangle] (8) at (-2.8, 1.3) {\huge $y$};
		\node [fill=white, draw=none, shape=rectangle] (9) at (-3.8, 2.5) {\LARGE $1$};
		\node [fill=white, draw=none, shape=rectangle] (10) at (-2.8, 2.5) {$\cdots$};
		\node [fill=white, draw=none, shape=rectangle] (11) at (-1.8, 2.5) {\LARGE $q$};
		\node [fill=white, draw=none, shape=rectangle] (12) at (-5, -3) {};
		\node [fill=white, draw=none, shape=rectangle] (13) at (-6, 3.8) {\LARGE $i$};
		\node [fill=white, draw=none, shape=rectangle] (14) at (-6, 2.7) {\huge $e$};
		\node [fill=white, draw=none, shape=rectangle] (16) at (2, 0) {$\cdots$};
		\node [fill=white, draw=none, shape=rectangle] (17) at (3, 0) {\LARGE $i-1$};
		\node [fill=white, draw=none, shape=rectangle] (18) at (3.9, 0.05) {\LARGE $i$};
		\node [fill=white, draw=none, shape=rectangle] (19) at (5, -2) {\huge $x$};
		\node [fill=white, draw=none, shape=rectangle] (20) at (5, -3) {};
		\node [fill=white, draw=none, shape=rectangle] (21) at (5.5, 0) {$\cdots$};
		\node [fill=white, draw=none, shape=rectangle] (22) at (7, 0) {\LARGE $j+1$};
		\node [fill=white, draw=none, shape=rectangle] (23) at (8.25, 0) {$\cdots$};
		\node [fill=white, draw=none, shape=rectangle] (24) at (7, 1) {\huge $y$};
		\node [fill=white, draw=none, shape=rectangle] (25) at (6, 2) {\LARGE $1$};
		\node [fill=white, draw=none, shape=rectangle] (26) at (7, 2) {$\cdots$};
		\node [fill=white, draw=none, shape=rectangle] (27) at (8, 2) {\LARGE $q$};
		\node [fill=white, draw=none, shape=rectangle] (28) at (4.65, -1.1) {\huge $e$};
		\node [fill=white, draw=none, shape=rectangle] (29) at (-1.1, 0) {\LARGE $p$};
		\node [fill=white, draw=none, shape=rectangle] (30) at (9, 0) {\LARGE $p$};
		\node [fill=white, draw=none, shape=rectangle] (31) at (1, 0) {\LARGE $1$};
		\node [fill=white, draw=none, shape=rectangle] (32) at (-9, 0) {\LARGE $1$};
             \node [] (33) at (-8.9,3) {};
                \node [] (34) at (-8.9,-3) {};
                \node [] (35) at (-1.1,3) {};
                \node [] (36) at (-1.1,-3) {};
     \node [] (37) at (.7,.5) {};
                \node [] (38) at (.7,-3) {};
                \node [] (39) at (5.9,.5) {};
                \node [] (40) at (5.9,-3) {};

		\draw [-Bracket, line width=1.2pt](13) to (14);
		\draw [dashed, line width=1.2pt](7) to (4);
		\draw [-, line width=1.2pt](4) to (12);
		\draw [-, line width=1.2pt](6) to (4);
		\draw [dashed, line width=1.2pt](3) to (4);
		\draw [-, line width=1.2pt](2) to (4);
		\draw [dashed, line width=1.2pt](1) to (4);
		\draw [-, line width=1.2pt](8) to (2);
		\draw [-, line width=1.2pt](9) to (8);
		\draw [dashed, line width=1.2pt](10) to (8);
		\draw [-, line width=1.2pt](11) to (8);
		\draw [-, line width=1.2pt](29) to (4);
		\draw [-, line width=1.2pt](19) to (20);
		\draw [-Bracket, line width=1.2pt](18) to (28);
		\draw [-, line width=1.2pt](17) to (19);
		\draw [dashed, line width=1.2pt](16) to (19);
		\draw [dashed, line width=1.2pt](21) to (19);
		\draw [-, line width=1.2pt](22) to (19);
		\draw [dashed, line width=1.2pt](23) to (19);
		\draw [-, line width=1.2pt](30) to (19);
		\draw [-, line width=1.2pt](24) to (22);
		\draw [-, line width=1.2pt][in=135, out=-45] (25) to (24);
		\draw [dashed, line width=1.2pt](26) to (24);
		\draw [-, line width=1.2pt](27) to (24);
		\draw [-, line width=1.2pt](32) to (4);
		\draw [-, line width=1.2pt](31) to (19);
              \draw [-, draw=black, line width=3pt, bend right=15] (33.center) to (34.center);
	      \draw [-, draw=black, line width=3pt, bend left=15] (35.center) to (36.center);
                            \draw [-, draw=black, line width=3pt, bend right=15] (37.center) to (38.center);
	        \draw [-, draw=black, line width=3pt, bend left=15] (39.center) to (40.center);
\end{tikzpicture}         
        }
        \medskip
        
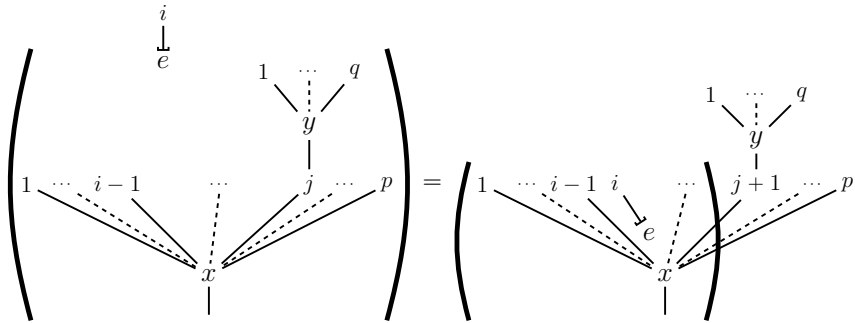
\captionof{figure}{The relation $e \bullet_{i} (x \circ_{j} y) = (e \bullet_i x) \circ_{j+1} y$.}
        \end{center}
    More curious are those cases in which the extra degeneracy $e \bullet_0 -$ appears since this involves the cyclic operator $t$, and hence a bending similar to Figure \ref{rel}. For example, the first line of \eqref{opere2} graphically comes out as:
  \begin{center}
       \scalebox{0.75}
                {
\begin{tikzpicture}
		\node [fill=white, draw=none, shape=rectangle] (0) at (0.25, -3) {\LARGE $=$};
		\node [fill=white, draw=none, shape=rectangle] (1) at (-4.5, -3.55) {\Large $j-1$};
		\node [fill=white, draw=none, shape=rectangle] (2) at (-3.5, -1.5) {\LARGE $y$};
		\node [fill=white, draw=none, shape=rectangle] (3) at (-4.5, 0) {\Large $1$};
		\node [fill=white, draw=none, shape=rectangle] (4) at (-3.5, 0) {$\cdots$};
		\node [fill=white, draw=none, shape=rectangle] (5) at (-2.5, 0) {\Large $q$};
		\node [fill=white, draw=none, shape=rectangle] (11) at (2, 0) {\Large $0$};
		\node [fill=white, draw=none, shape=rectangle] (12) at (4, 0) {$\cdots$};
		\node [fill=white, draw=none, shape=rectangle] (17) at (5, 0) {\Large $q$};
		\node [fill=white, draw=none, shape=rectangle] (18) at (4, -1.5) {\LARGE $y$};
		\node [fill=white, draw=none, shape=rectangle] (19) at (-3.75, -4.63) {\LARGE $e$};
		\node [fill=white, draw=none, shape=rectangle] (20) at (6, 0) {\Large $q+1$};
		\node [fill=white, draw=none, shape=rectangle] (21) at (4.75, -1) {\LARGE $e$};
		\node [fill=white, draw=none, shape=rectangle] (22) at (4.75, -2.25) {};
		\node [fill=white, draw=none, shape=rectangle] (23) at (-4, -3.25) {};
		\node [fill=white, draw=none, shape=rectangle] (27) at (3, 0) {\Large $1$};
		\node [fill=white, draw=none, shape=rectangle] (28) at (4, -3.25) {};
                \node [] (t) at (8.1, -2.5) {\LARGE $t$};

                \node [fill=white, draw=none, shape=rectangle] (13) at (4, -5.25) {\LARGE $x$};
                     \node [fill=white, draw=none, shape=rectangle] (13a) at (4, -6) {\LARGE };
		\node [fill=white, draw=none, shape=rectangle] (14) at (1.25, -3.5) {\Large $1$};
	        \node [fill=white, draw=none, shape=rectangle] (15) at (2, -3.5) {$\cdots$};
                	\node [fill=white, draw=none, shape=rectangle] (15a) at (4, -3.5) {$j-1$};
	\node [fill=white, draw=none, shape=rectangle] (15b) at (6, -3.5) {$\cdots$};
	\node [fill=white, draw=none, shape=rectangle] (16) at (6.75, -3.5) {\Large $p$};

                        \node [fill=white, draw=none, shape=rectangle] (24) at (-3.5, -5.25) {\LARGE $x$};
                     \node [fill=white, draw=none, shape=rectangle] (25) at (-3.5, -6) {\LARGE };
		\node [fill=white, draw=none, shape=rectangle] (26) at (-6.25, -3.5) {\Large $1$};
	        \node [fill=white, draw=none, shape=rectangle] (24a) at (-5.5, -3.5) {$\cdots$};
                	\node [fill=white, draw=none, shape=rectangle] (25a) at (-3.5, -3.5) {$j$};
	\node [fill=white, draw=none, shape=rectangle] (26a) at (-1.5, -3.5) {$\cdots$};
	\node [fill=white, draw=none, shape=rectangle] (6) at (-.75, -3.5) {\Large $p$};

                \node [] (29) at (-6.6, -3.25) {};
                \node [] (30) at (-6.6, -6) {};
                \node [] (31) at (-.4, -3.25) {};
                \node [] (32) at (-.4, -6) {};
                \node [] (33) at (1.5, .5) {};
                \node [] (34) at (1.5, -3.25) {};
                \node [] (35) at (7.75, .5) {};
                \node [] (36) at (7.75, -3.25) {};

		\draw [-, line width=1.2pt]  (3) to (2);
		\draw [dashed, line width=1.2pt]  (4) to (2);
		\draw [-, line width=1.2pt]  (5) to (2);
		\draw [-Bracket, line width=1.2pt]  (1) to (19);
		\draw [-, line width=1.2pt]  (6) to (24);
		\draw [dashed, line width=1.2pt]  (26) to (24);
		\draw [-, line width=1.2pt]  (25.center) to (24);
		\draw [-, line width=1.2pt]  (25a) to (2);
		\draw [-, line width=1.2pt]  (14) to (13);
        	\draw [dashed, line width=1.2pt]  (15) to (13);
                \draw [-, line width=1.2pt]  (15a) to (13);
                \draw [-, line width=1.2pt]  (13a.center) to (13);
                \draw [dashed, line width=1.2pt]  (15b) to (13);
		\draw [-, line width=1.2pt]  (27) to (18);
		\draw [dashed, line width=1.2pt]  (12) to (18);
		\draw [-, line width=1.2pt]  (17) to (18);
		\draw [-Bracket, line width=1.2pt]  (20) to (21);
                \draw [-, line width=1.2pt]  (24) to (26);
                \draw [dashed, line width=1.2pt]  (24) to (24a);
                \draw [dashed, line width=1.2pt]  (24) to (26a);
                \draw [-, line width=1.2pt]  (24) to (25a);
                \draw [-, line width=1.2pt, in=0, out=30, looseness=1.75] (20) to (22.center);
		\draw [-, line width=1.2pt, bend left=315, looseness=1.25] (22.center) to (28.center);
		\draw [-, line width=1.2pt, in=630, out=-90, looseness=2.00] (18) to (11);
		\draw [-, line width=1.2pt]  (16) to (13);
             \draw [-, draw=black, line width=3pt, bend right=15] (29.center) to (30.center);
	     \draw [-, draw=black, line width=3pt, bend left=15] (31.center) to (32.center);
\draw [thick, rounded corners] (1.25, 0.75) rectangle (7.8, -3);
\end{tikzpicture}
        }
        \medskip
        
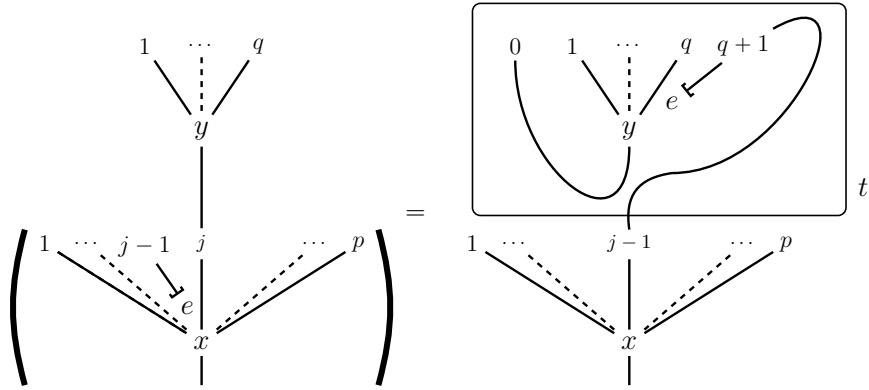
\captionof{figure}{The relation $(e \bullet_{j-1} x) \circ_j y = x \circ_{j-1} (e \bullet_0 y)$.}
        \end{center}
  To conclude, let us still depict the third line in relation \eqref{opere2}, which involves a possibly unexpected flip between $x$ and $y$:
     \begin{center}
       \scalebox{0.75}
                {
\begin{tikzpicture}
		\node [fill=white, draw=none, shape=rectangle] (0) at (0.25, 0) {\LARGE $=$};
		\node [fill=white, draw=none, shape=rectangle] (1) at (-2, 0) {\Large $p+1$};
		\node [fill=white, draw=none, shape=rectangle] (2) at (-2, 1.5) {\LARGE $y$};
		\node [fill=white, draw=none, shape=rectangle] (3) at (-3, 3) {\Large $1$};
		\node [fill=white, draw=none, shape=rectangle] (4) at (-2, 3) {$\cdots$};
		\node [fill=white, draw=none, shape=rectangle] (5) at (-1, 3) {\Large $q$};
		\node [fill=white, draw=none, shape=rectangle] (6) at (-3, 0) {\Large $p$};
		\node [fill=white, draw=none, shape=rectangle] (8) at (-5.25, 0) {};
		\node [fill=white, draw=none, shape=rectangle] (11) at (2, 0) {\Large $0$};
		\node [fill=white, draw=none, shape=rectangle] (12) at (4, 0) {$\cdots$};
		\node [fill=white, draw=none, shape=rectangle] (13) at (2, 1.5) {\LARGE $x$};
		\node [fill=white, draw=none, shape=rectangle] (14) at (1, 3) {\Large $1$};
		\node [fill=white, draw=none, shape=rectangle] (15) at (2, 3) {$\cdots$};
		\node [fill=white, draw=none, shape=rectangle] (16) at (3, 3) {\Large $p$};
		\node [fill=white, draw=none, shape=rectangle] (17) at (5, 0) {\Large $q$};
		\node [fill=white, draw=none, shape=rectangle] (18) at (4, -1.5) {\LARGE $y$};
		\node [fill=white, draw=none, shape=rectangle] (19) at (-3.25, -.9) {\LARGE $e$};
		\node [fill=white, draw=none, shape=rectangle] (20) at (6, 0) {\Large $q+1$};
		\node [fill=white, draw=none, shape=rectangle] (21) at (4.75, -1) {\LARGE $e$};
		\node [fill=white, draw=none, shape=rectangle] (22) at (4.75, -2.25) {};
		\node [fill=white, draw=none, shape=rectangle] (23) at (-4, -3.25) {};
		\node [fill=white, draw=none, shape=rectangle] (24) at (-4, -1.5) {\LARGE $x$};
		\node [fill=white, draw=none, shape=rectangle] (25) at (-5, 0) {\Large $1$};
		\node [fill=white, draw=none, shape=rectangle] (26) at (-4, 0) {$\cdots$};
		\node [fill=white, draw=none, shape=rectangle] (27) at (3, 0) {\Large $1$};
		\node [fill=white, draw=none, shape=rectangle] (28) at (4, -3.25) {};
                \node [] (t) at (8.1, -2.5) {\LARGE $t$};
                
                \node [] (29) at (-5.25, .5) {};
                \node [] (30) at (-5.25, -3.25) {};
                \node [] (31) at (-1, .5) {};
                \node [] (32) at (-1, -3.25) {};
                \node [] (33) at (1.5, .5) {};
                \node [] (34) at (1.5, -3.25) {};
                \node [] (35) at (7.75, .5) {};
                \node [] (36) at (7.75, -3.25) {};

		\draw [-, line width=1.2pt]  (3) to (2);
		\draw [dashed, line width=1.2pt]  (4) to (2);
		\draw [-, line width=1.2pt]  (5) to (2);
		\draw [-Bracket, line width=1.2pt]  (1) to (19);
		\draw [-, line width=1.2pt]  (6) to (24);
		\draw [dashed, line width=1.2pt]  (26) to (24);
		\draw [-, line width=1.2pt]  (25) to (24);
		\draw [-, line width=1.2pt]  (24) to (23.center);
		\draw [-, line width=1.2pt]  (2) to (1);
		\draw [-, line width=1.2pt]  (14) to (13);
		\draw [dashed, line width=1.2pt]  (15) to (13);
		\draw [-, line width=1.2pt]  (13) to (11);
		\draw [-, line width=1.2pt]  (27) to (18);
		\draw [dashed, line width=1.2pt]  (12) to (18);
		\draw [-, line width=1.2pt]  (17) to (18);
		\draw [-Bracket, line width=1.2pt]  (20) to (21);
		\draw [-, line width=1.2pt, in=0, out=30, looseness=1.75] (20) to (22.center);
		\draw [-, line width=1.2pt, bend left=315, looseness=1.25] (22.center) to (28.center);
		\draw [-, line width=1.2pt, in=630, out=-90, looseness=2.00] (18) to (11);
		\draw [-, line width=1.2pt]  (16) to (13);
             \draw [-, draw=black, line width=3pt, bend right=15] (29.center) to (30.center);
	     \draw [-, draw=black, line width=3pt, bend left=15] (31.center) to (32.center);
\draw [thick, rounded corners] (1.25, 0.75) rectangle (7.8, -3);
\end{tikzpicture}
        }
        \medskip
        
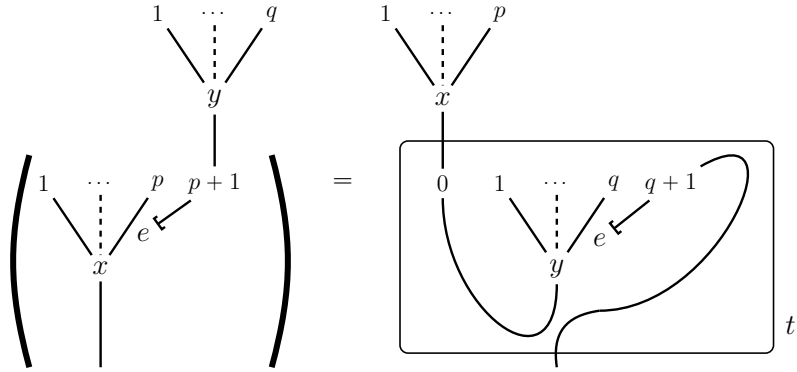
\captionof{figure}{The relation $(e \bullet_{p+1} x) \circ_{p+1} y = (e \bullet_0 y) \circ_1 x$.}
        \end{center}
At this point, the reader should not encounter any difficulties to picture all remaining instances in Eqs.~\eqref{opere1}--\eqref{opere6}. We can therefore proceed to our main objective in the subsequent section.

\section{BV algebras as cyclic duals of BV modules}

In this section, we are going to prove the core theorem, {\em i.e.}, that a cosimplicial-compatible operad $\cP$ in the sense of Definition \ref{compitompi} yields a Gerstenhaber structure on its cohomology, and if the operad is cocyclic-compatible as well, then this Gerstenhaber structure is, in particular, Batalin-Vilkoviski\u\i. We shall perform all proofs for the explicit cosimplicial operations \eqref{journuit} of the cyclic dual of a cyclic opposite $\cO$-module $\cM$, but it is clear from what was said in \S\ref{mendelssohn} that the same proofs carry over to the general case as well.

To start with, 
let us discuss the cyclic dual analogue of the graded commutativity of the cup product up to homotopy as described in \cite[Thm.~3]{Ger:TCSOAAR}.

\begin{lem}
  \label{kettensaege}
  Let $(\cM, \tau)$ be a cyclic opposite module over an operad $(\cO, \mu, e)$ with multiplication. If its cyclic dual $\hat\cM$ is at the same time a cosimplicial-com\-pat\-i\-ble operad with respect to the cosimplicial structure \eqref{journuit} induced by the opposite $\cO$-action, then for any $x \in \hat\cM(p)$ and  $y \in \hat\cM(q)$
  the cup product
  \begin{equation}
    \label{posaunehierposaunedort}
 x \cup y := (e \bullet_0 y) \circ_1 x 
        \end{equation}
  on $\hat \cM$ is an associative and graded commutative product up to homotopy, {\em i.e.},
  \begin{equation}
    \label{anachroniste}
y \cup x - (-1)^{\sss pq} x \cup y = (\gd x) \{ y \} + (-1)^{\sss q-1} x \{ \gd y \} - (-1)^{\sss q-1} \gd\big(x \{y \}\big), 
    \end{equation}
  where $\gd$ is the coboundary given in \eqref{stakker}.
  Finally, the graded Leibniz rule
  \begin{equation}
    \label{dga}
\gd(x \cup y) = \gd x \cup y + (-1)^p x \cup \gd y
    \end{equation}
holds, turning $(\hat\cM, \cup, \gd)$ into a dg algebra.
\end{lem}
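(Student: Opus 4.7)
The proof has three separate assertions: strict associativity of $\cup$, the homotopy formula \eqref{anachroniste} for graded commutativity, and the graded Leibniz rule \eqref{dga}. Each reduces to a direct expansion using the operadic associativity \eqref{danton} and the cosimplicial compatibility relations \eqref{opere1}--\eqref{opere6} derived in \S\ref{mendelssohn}; the strategy is in spirit that of Gerstenhaber \cite{Ger:TCSOAAR}, with the identities \eqref{useless1}--\eqref{useless2} replaced throughout by these more general compatibility relations.

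For \emph{strict associativity}, I would unfold
$(x \cup y) \cup z = (e \bullet_0 z) \circ_1 \big((e \bullet_0 y) \circ_1 x\big)$
and apply the middle case of \eqref{danton} (with $i = j = 1$) to bring it to $\big((e \bullet_0 z) \circ_1 (e \bullet_0 y)\big) \circ_1 x$. On the other side,
$x \cup (y \cup z) = \big(e \bullet_0 ((e \bullet_0 z) \circ_1 y)\big) \circ_1 x$, and the first line of \eqref{opere1} (with $i=0$, $j=1$) reduces the inner $e \bullet_0(-)$ to $(e \bullet_0 e \bullet_0 z) \circ_2 y$. A single application of the first case of operad associativity (reordering $\circ_2$ and $\circ_1$) brings both expressions to a common canonical form, the matching being ensured by the cosimplicial identity $\gd_0 \gd_0 = \gd_1 \gd_0$ (a direct consequence of \eqref{SchlesischeStr}).

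For the \emph{Leibniz rule}, I would expand $\gd(x \cup y) = \sum_{i=0}^{p+q+1}(-1)^i e \bullet_i\big((e \bullet_0 y) \circ_1 x\big)$ and split the sum into three ranges according to \eqref{opere1} applied with $u = e \bullet_0 y$, $v = x$, $j=1$. The middle range, $1 \leq i \leq p$, reproduces $(e \bullet_0 y) \circ_1 (e \bullet_i x)$ — which are the interior terms of $\gd x \cup y$. The missing boundary contributions at $i = 0$ and $i = p+1$ arise from the outer ranges via the first and third lines of \eqref{opere5}, together with the $\gd_0 \gd_0 = \gd_1 \gd_0$ identity; collecting signs, these reassemble into $\gd x \cup y + (-1)^p x \cup \gd y$.

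The \emph{graded commutativity} formula is the main technical content and the step I expect to be the real obstacle, since it requires a careful sign-sensitive reshuffle of roughly three double sums. I would expand each of $\gd(x\{y\})$, $(\gd x)\{y\}$, and $x\{\gd y\}$ by inserting the definitions of $\gd$ and of the braces \eqref{bracytracy}, then decompose each term $e \bullet_j(x \circ_i y)$ using \eqref{opere1} into its three ranges of~$j$. After reindexing, the generic (middle-range) terms of $\gd(x\{y\})$ cancel with the corresponding terms coming from $(\gd x)\{y\}$ and $x\{\gd y\}$ in pairs, the cancellation being forced by the alternating $(-1)^j$ and the index shift $i \mapsto i+q-1$ appearing in the outer ranges of \eqref{opere1}. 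The terms that \emph{survive} are exactly the boundary contributions $j = i - 1$ and $j = i + q$ from \eqref{opere4}, which produce $x \circ_{j-1}(e \bullet_0 y)$, $x \circ_j (e \bullet_{q+1} y)$, and the ``wrap-around'' $(e \bullet_0 y) \circ_1 x$; the middle of these is rewritten via the third line of \eqref{opere5} as $(e \bullet_j x) \circ_j y$. After a final sign count, the remainder equals $(-1)^{q-1}\big(y \cup x - (-1)^{pq}\, x \cup y\big)$, which rearranges into \eqref{anachroniste}. The delicate point throughout is that the compatibility relations \eqref{opere1}--\eqref{opere6} are precisely what is needed so that each would-be obstruction is identified with a $\cup$-product term rather than a genuinely new expression.
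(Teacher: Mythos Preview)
Your proposal is correct and follows essentially the same route as the paper's proof: associativity via \eqref{danton} and the cosimplicial identity $\gd_0\gd_0=\gd_1\gd_0$, the Leibniz rule by splitting $\gd(x\cup y)$ according to the three ranges of \eqref{opere1}, and the homotopy formula \eqref{anachroniste} by expanding $\gd(x\{y\})$ via \eqref{opere1}, recognising $x\{\gd y\}$ and $(\gd x)\{y\}$ inside, and identifying the leftover boundary contributions as the two cup products through \eqref{opere4}--\eqref{opere5}. One small slip: in the associativity step the passage from $(e\bullet_0 z)\circ_1(e\bullet_0 y)$ to $(e\bullet_0(e\bullet_0 z))\circ_2 y$ is not raw operadic associativity \eqref{danton} but the compatibility relation \eqref{opere5} (first line, with $j=1$) followed by $\gd_0\gd_0=\gd_1\gd_0$---exactly as the paper does.
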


\begin{rem}
 Observe that
by means of Eq.~\eqref{opere4}, the cup product  \eqref{posaunehierposaunedort}
can be written in two ways:
  \begin{equation}
    \label{posaunehierposauneda}
 x \cup y = (e \bullet_0 y) \circ_1 x = (e \bullet_{p+1} x) \circ_{p+1} y,
        \end{equation}
as is the case for the cup product in an operad with multiplication.
\end{rem}

\begin{proof}[Proof of Lemma \ref{kettensaege}]
The associativity of the product \eqref{posaunehierposaunedort} is easily seen by applying Eqs.~\eqref{opere1}, \eqref{opere2}, the composition axioms of an operad, as well as \eqref{SchlesischeStr}:
 \begin{eqnarray*}
(x \cup y) \cup z
& \overset{\sss \eqref{posaunehierposaunedort}}{=} &
(e \bullet_0 z) \circ_1 (x \cup y)
\ \, \overset{\sss \eqref{posaunehierposaunedort}}{=} \ \,
(e \bullet_0 z) \circ_1 \big( (e \bullet_0 y) \circ_1 x\big)
\\
   & \overset{\sss \eqref{danton}}{=} &
\big((e \bullet_0 z) \circ_1 (e \bullet_0 y) \big) \circ_1 x
\ \, \overset{\sss \eqref{opere5}}{=} \ \,
\pig(\big( e \bullet_1 (e \bullet_0 z)\big) \circ_2 y \pig) \circ_1 x
\\
   & \overset{\sss \eqref{SchlesischeStr}}{=} &
\pig(\big( e \bullet_0 (e \bullet_0 z)\big) \circ_2 y \pig) \circ_1 x
\ \ \, \overset{\sss \eqref{opere4}}{=} \ \
   \pig(e \bullet_0 \big((e \bullet_0 z) \circ_1 y\big)\pig) \circ_1 x
\\
   & \overset{\sss \eqref{posaunehierposaunedort}}{=} &
   \big(e \bullet_0 (y \cup z)\big) \circ_1 x
\ \ \, \overset{\sss \eqref{posaunehierposaunedort}}{=} \ \
x \cup ( y \cup z).
 \end{eqnarray*}
  The homotopy relation \eqref{anachroniste} is proven by using the relations \eqref{opere1}--\eqref{opere6}:
\begin{equation}
  \label{first}
  \begin{split}
    \gd\big(x \{y\}\big) &=
    \textstyle\sum\limits^{p}_{j=1} \, \sum\limits^{p+q}_{i=0} (-1)^{\sss (q-1)(j-1) + i} e \bullet_i ( x \circ_j y)
    \\
   & = 
    \textstyle\sum\limits^{p}_{j=1} \, \sum\limits^{j-1}_{i=0} (-1)^{\sss (q-1)(j-1) + i}  (e \bullet_i x) \circ_{j+1} y
    \\
    &
    \qquad
    +
\textstyle\sum\limits^{p}_{j=1} \, \sum\limits^{j+q-1}_{i=j} (-1)^{\sss (q-1)(j-1) + i} 
x \circ_j (e \bullet_{i-j+1} y)
        \\
        & \qquad 
        +
        \textstyle\sum\limits^{p}_{j=1} \, \sum\limits^{p+q}_{i=j+q} (-1)^{\sss (q-1)(j-1) + i} 
        (e \bullet_{i-q+1} x) \circ_{j} y
    \\
    & = 
    \textstyle\sum\limits^{p+1}_{j=2} \, \sum\limits^{j-2}_{i=0} (-1)^{\sss (q-1)j + i}
    (e \bullet_i x) \circ_{j} y
    +
        \textstyle\sum\limits^{p}_{j=1} \, \sum\limits^{q}_{i=1} (-1)^{\sss q(j-1) + i} x \circ_j (e \bullet_i y)
        \\
        & \qquad 
        +
            \textstyle\sum\limits^{p}_{j=1} \sum\limits^{p+1}_{i=j+1} (-1)^{\sss (q-1)j + i} (e \bullet_{i} x) \circ_{j} y,
  \end{split}
  \end{equation}
where in the third step we used re-indexing $j \mapsto j+1$ in the first, $i \mapsto i - j+1$ in the second, and $i \mapsto i -q +1$ in the third summand.
Observe that the second summand can be rewritten as
\begin{equation}
  \label{second}
  \begin{split}
&         \textstyle\sum\limits^{p}_{j=1} \, \sum\limits^{q}_{i=1} (-1)^{\sss q(j-1) + i} x \circ_j (e \bullet_i y)
    \\
    = \, &
   x \{ \gd y \} - \textstyle\sum\limits^p_{j=1} (-1)^{\sss q(j-1)} x \circ_j (e \bullet_0 y)
    - \textstyle\sum\limits^p_{j=1} (-1)^{\sss qj+1} x \circ_j (e \bullet_{q+1} y),
  \end{split}
  \end{equation}
whereas the first and the third summand in the last step of \eqref{first} are
\begin{equation}
  \label{third}
  \begin{split}
    &
   \textstyle\sum\limits^{p+1}_{j=2} \, \sum\limits^{j-2}_{i=0} (-1)^{\sss (q-1)j + i}
    (e \bullet_i x) \circ_{j} y
        +
            \textstyle\sum\limits^{p}_{j=1} \sum\limits^{p+1}_{i=j+1} (-1)^{\sss (q-1)j + i} (e \bullet_{i} x) \circ_{j} y,
\\
            = \ &
            (-1)^{\sss q-1} (\gd x) \{ y \}
            - \textstyle\sum\limits^{p+1}_{j=1} (-1)^{\sss q(j-1) + 1}
    (e \bullet_{j} x) \circ_j y
\\
& \quad
            - \textstyle\sum\limits^{p+1}_{j=1} (-1)^{\sss q(j-1)}
            (e \bullet_{j-1} x) \circ_j y
                    \\
            = \ &
            (-1)^{\sss q-1} (\gd x) \{ y \}
            - \textstyle\sum\limits^{p}_{j=1} (-1)^{\sss qj}
    x \circ_j  ( e \bullet_{q+1} y) - (-1)^{\sss q(p+1)} (e \bullet_0 y) \circ_{1} x
\\
& \quad
            - \textstyle\sum\limits^{p+1}_{j=2} (-1)^{\sss qj+1}
            x \circ_{j-1}  (e \bullet_{0} y) -  (-1)^{\sss q-1} (e \bullet_{q+1} y) \circ_{q+1} x,
              \\
            = \ &
            (-1)^{\sss q-1} (\gd x) \{ y \}
            + \textstyle\sum\limits^{p}_{j=1} (-1)^{\sss q(j-1) +1}
    x \circ_j  ( e \bullet_{q+1} y) + (-1)^{\sss px} x \cup y
\\
& \quad
            - \textstyle\sum\limits^{p}_{j=1} (-1)^{\sss q(j-1)+1}
  x \circ_{j}  (e \bullet_{0} y) - (-1)^{\sss q-1} y \cup x,
  \end{split}
  \end{equation}
where we used \eqref{opere4} in the third step and re-indexing $j \mapsto j-1$, as well as \eqref{posaunehierposaunedort} and \eqref{posaunehierposauneda} in the last. Observe that the second and the fourth summand in the last step of \eqref{third} are minus the second and third in \eqref{second}.
Hence, assembling the three equations \eqref{first}--\eqref{third}, we obtain
$$
\gd\big(x \{y \}\big) =
x \{ \gd y \} 
+ (-1)^{\sss q-1} (\gd x) \{ y \} - (-1)^{\sss q-1}
\big( y \cup x - (-1)^{\sss pq} x \cup y \big), 
$$
as desired.
Finally, to prove Eq.~\eqref{dga}, one again splits the respective sum according to the various cases in the relations \eqref{opere1} and uses, in particular, the second line in \eqref{opere2} to fill in the missing terms:
 \begin{eqnarray*}
\gd(x \cup y)
& \overset{\sss \eqref{stakker}, \eqref{posaunehierposaunedort}}{=} &
\Sum_{k=0}^{p+q+1} (-1)^{\sss k} e \bullet_k \big( (e \bullet_0 y) \circ_1 x) 
   \\
   & \overset{\sss \eqref{opere1}}{=} &
\big(e \bullet_0 (e \bullet_0 y)\big) \circ_2 x 
+
\Sum_{k=1}^{p} (-1)^{\sss k} (e \bullet_0 y) \circ_1 (e \bullet_k x)
\\
&& \qquad 
+
\Sum_{k=p+1}^{p+q+1} (-1)^{\sss k} \big(e \bullet_{k-p+1} (e \bullet_0 y)\big) \circ_1 x
   \\
   & \overset{\sss \eqref{SchlesischeStr} 
   }{=} &
\big(e \bullet_1 (e \bullet_0 y)\big) \circ_2 x 
+
\Sum_{k=1}^{p} (-1)^{\sss k} (e \bullet_0 y) \circ_1 (e \bullet_k x)
\\
&& \qquad 
+
\Sum_{k=p+1}^{p+q+1} (-1)^{\sss k} \big(e \bullet_0 (e \bullet_{k-p} y)\big) \circ_1 x
   \\
   & \overset{\sss \eqref{opere2}}{=} & 
\Sum_{k=0}^{p} (-1)^{\sss k} (e \bullet_0 y) \circ_1 (e \bullet_k x)
+
\Sum_{k=1}^{q+1} (-1)^{\sss p+k} \big(e \bullet_0 (e \bullet_{k} y)\big) \circ_1 x
   \\
   & \overset{\sss }{=} & 
\Sum_{k=0}^{p} (-1)^{\sss k} \gd_k x \cup y
+
(-1)^p \Sum_{k=1}^{q+1} (-1)^{\sss k} x \cup \gd_{k} y 
   \\
   & \overset{\sss \eqref{opere2}, \eqref{posaunehierposaunedort}}{=} & 
\Sum_{k=0}^{p} (-1)^{\sss k} \gd_k x \cup y + (-1)^{\sss p+1} (e \bullet_0 y) \circ_1 (e \bullet_{p+1} x)
\\
 && \qquad 
- (-1)^{\sss p+1} \big(e \bullet_1 (e \bullet_0 y)\big) \circ_1 x
+
(-1)^p \Sum_{k=1}^{q+1} (-1)^{\sss k} x \cup \gd_k y
   \\
   & \overset{\sss 
     \eqref{posaunehierposaunedort}}{=} &
   \gd x \cup y + (-1)^{\sss p} x \cup \gd y,
 \end{eqnarray*}
 which is the right hand side in \eqref{dga}, and
 where we used re-indexing $k \mapsto k-q$ in step four.
 \end{proof}

\begin{lemma}
  \label{leibnizuptohomotopy}
  With the same assumptions as in Lemma \ref{kettensaege}, one has
  \begin{equation}
    \label{cuphomo1}
(y \cup z)\{ x\} = y \{x\} \cup z + (-1)^{\sss (p-1)q} y \cup z\{x\}
  \end{equation}
  for any $x \in \hat\cM(p)$,  $y \in \hat\cM(q)$, and any $z \in \hat\cM$. Bracing the cup product from the left instead, one obtains
  \begin{equation}
\label{cuphomo2}
\begin{split}
x \{y \cup z\} &= 
(-1)^{\sss (p-1)r} x \{y\} \cup z + y \cup x\{z\}
\\
& \quad
+ (-1)^{\sss (q-1)r} \gd \big(F(x,y,z)\big) + (-1)^{\sss p+1} F(\gd x, y, z)
\\
& \quad
+ (-1)^{\sss p(q-1)} F(x, \gd y, z)
+ (-1)^{\sss (p+q)(r-1)} F(x, y, \gd z),
\end{split}
   \end{equation}
  where $F$ is defined by 
  $$
F(x,y,z) := \Sum^{p-1}_{i=1} \, \Sum^{p+q-1}_{j= q +i} (-1)^{\sss (q-1)(i-1)+(r-1)(j-1)} (x \circ_i y) \circ_j z 
  $$
  as an element in $\hat\cM(p+q+r-2)$.
  \end{lemma}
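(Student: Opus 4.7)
For the first identity \eqref{cuphomo1}, the strategy is a direct expansion followed by operadic associativity. Writing $y \cup z = (e \bullet_0 z) \circ_1 y$ and applying the brace definition, one gets
$$
(y\cup z)\{x\}=\Sum_{i=1}^{q+r}(-1)^{\sss (p-1)(i-1)}\bigl((e\bullet_0 z)\circ_1 y\bigr)\circ_i x.
$$
The operadic associativity \eqref{danton} then splits the sum according to whether $x$ is inserted inside $y$ (case $1\le i\le q$, middle clause) or past $y$ (case $q+1\le i\le q+r$, third clause). In the first regime, the composition becomes $(e\bullet_0 z)\circ_1(y\circ_i x)=(y\circ_i x)\cup z$, which sums to $y\{x\}\cup z$. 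In the second regime, after the substitution $i\mapsto i+q$, the composition becomes $((e\bullet_0 z)\circ_{i+1}x)\circ_1 y$; by the first line of \eqref{opere1} this equals $(e\bullet_0(z\circ_i x))\circ_1 y=y\cup(z\circ_i x)$, and collecting the signs $(-1)^{\sss (p-1)(i+q-1)}$ produces $(-1)^{\sss (p-1)q}\,y\cup z\{x\}$.

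For the homotopy identity \eqref{cuphomo2} I proceed by the same pattern but now with the brace and cup product swapped. Expanding
$$
x\{y\cup z\}=\Sum_{i=1}^{p}(-1)^{\sss (q+r-1)(i-1)}\,x\circ_i\bigl((e\bullet_0 z)\circ_1 y\bigr),
$$
the middle case of \eqref{danton} converts each term into $(x\circ_i(e\bullet_0 z))\circ_i y$; the first line of \eqref{opere5} then rewrites $x\circ_i(e\bullet_0 z)$ as $(e\bullet_i x)\circ_{i+1} z$, and one applies \eqref{danton} once more to land on $\bigl((e\bullet_i x)\circ_i y\bigr)\circ_{i+q} z$. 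Two clean cases are built into $(e\bullet_i x)\circ_i y$ via \eqref{opere2}: using the second line ($i=j$) produces $x\circ_i(e\bullet_{q+1}y)\circ_{i+q}z$, which after \eqref{opere5} repackages into the ``disjoint insertion'' terms $(x\circ_j y)\cup z$-type contributions summing to $(-1)^{\sss (p-1)r}x\{y\}\cup z$; whereas the companion case extracted from a parallel expansion of $x\circ_i(e\bullet_0 z)$ yields the terms that recombine as $y\cup x\{z\}$. The remaining cross terms are precisely those double compositions $(x\circ_i y)\circ_j z$ with $j\ge q+i$, i.e.\ insertions of $y$ and $z$ into $x$ at \emph{non-overlapping} positions; these are the summands that define $F(x,y,z)$.

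To see that the cross terms assemble into the claimed homotopy, I compute $\gd F(x,y,z)$ expanding $\gd=\sum_k(-1)^k e\bullet_k(-)$ and distributing via \eqref{opere1}--\eqref{opere5}. The analogue of the calculation carried out in Lemma \ref{kettensaege} (equations \eqref{first}--\eqref{third}) shows that the interior faces cancel pairwise against $F(\gd x,y,z)$, $F(x,\gd y,z)$, $F(x,y,\gd z)$, while the extremal faces $k=0$ and $k=p+q+r-2$ produce precisely the missing cross terms from $x\{y\cup z\}-(-1)^{(p-1)r}x\{y\}\cup z-y\cup x\{z\}$. I expect the main obstacle to be the bookkeeping of signs and the case-splitting at the boundary indices $i=0,j-1,j,q+1$ of Eqs.~\eqref{opere1}--\eqref{opere6}; the mathematical content is standard Gerstenhaber--Voronov-style brace manipulation, but the correct alignment of the Koszul signs $(-1)^{(q-1)(i-1)+(r-1)(j-1)}$ appearing in $F$ with the coboundary signs $(-1)^k$ requires careful re-indexing, exactly as in the proof of Lemma \ref{kettensaege}.
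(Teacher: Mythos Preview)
Your treatment of \eqref{cuphomo1} is correct and is exactly what the paper calls a ``simple and direct check'': the split of the brace sum at $i=q$ via the operadic associativity \eqref{danton}, followed by the first line of \eqref{opere1} to recognise $(e\bullet_0 z)\circ_{i'+1}x = e\bullet_0(z\circ_{i'}x)$, is precisely the intended route.

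For \eqref{cuphomo2} the paper gives no proof at all, describing it as a tedious double-sum computation of the same kind as Lemma~\ref{kettensaege} and Theorem~\ref{main}. Your overall strategy --- expand $\gd F(x,y,z)$ via \eqref{opere1}--\eqref{opere5}, cancel the interior faces against $F(\gd x,y,z)$, $F(x,\gd y,z)$, $F(x,y,\gd z)$, and identify the surviving boundary terms --- is the correct one and matches the paper's stated method. However, the middle paragraph of your plan is muddled. You write that ``two clean cases are built into $(e\bullet_i x)\circ_i y$ via \eqref{opere2}'' and that one of them ``repackages into $(x\circ_j y)\cup z$-type contributions summing to $(-1)^{(p-1)r}x\{y\}\cup z$''. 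This cannot work as stated: the expansion $x\circ_i(y\cup z)=((e\bullet_i x)\circ_i y)\circ_{i+q}z$ produces only terms with $y$ and $z$ inserted at \emph{adjacent} slots of $x$, whereas $x\{y\}\cup z$ has $z$ sitting entirely to the right of $x\circ_i y$ and $y\cup x\{z\}$ has $y$ entirely to the left of $x\circ_j z$. The rewriting $(e\bullet_i x)\circ_i y = x\circ_i(e\bullet_{q+1}y)$ from the second line of \eqref{opere2} is just the inverse of the third line of \eqref{opere5} and does not produce a genuine case split. These non-adjacent terms arise only from the extremal coface contributions $k=0$ and $k=p+q+r-1$ in $\gd F$, together with the boundary summands $i=1$, $j=p+q-1$ in the $F$-terms --- exactly the mechanism you describe correctly in your final paragraph. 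So drop the attempt to read off $x\{y\}\cup z$ and $y\cup x\{z\}$ before computing $\gd F$; they emerge from that computation, not before it.
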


\begin{proof}
  The first identity \eqref{cuphomo1} is a simple and direct check
  using the defining Equation \eqref{posaunehierposaunedort} along with the relations \eqref{opere1} \& \eqref{opere2}. Checking \eqref{cuphomo2} is less direct, and much more tedious, but essentially still only relying on a double sum yoga exploiting the same identities from Definition \ref{compitompi} resp.\ \S\ref{mendelssohn}. Not being substantially different from the proof technique applied in Lemma \ref{kettensaege} resp.\ the forthcoming central Theorem \ref{main}, we omit this here.
  \end{proof}

The two preceding lemmata allow us to state:

\begin{prop}
  \label{gerstasgerstcan}
The cohomology groups of the cyclic dual $\hat \cM$ as in Lemma \ref{kettensaege} with respect to the differential \eqref{stakker} form a Gerstenhaber algebra.
  \end{prop}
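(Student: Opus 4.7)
My plan is to adapt Gerstenhaber's original strategy \cite{Ger:TCSOAAR} to the cosimplicial-compatible setting of Definition \ref{compitompi}, relying heavily on Lemmata \ref{kettensaege}--\ref{leibnizuptohomotopy} which already take care of all parts of the argument involving the cup product. By Lemma \ref{kettensaege}, the triple $(\hat\cM,\cup,\gd)$ is a dg algebra, so $\cup$ descends to an associative product on $H^\bullet(\hat\cM)$; the homotopy formula \eqref{anachroniste} moreover shows that for cocycles $x,y$ the difference $y\cup x - (-1)^{pq} x\cup y$ is $\gd$-exact, yielding graded commutativity on cohomology.

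For the bracket, graded antisymmetry of $\{\cdot,\cdot\}$ is immediate from its definition \eqref{bracket-po-packet}. The first task I would then tackle is a chain-level identity of the form $\gd\{x,y\} = -\{\gd x, y\} - (-1)^{p-1}\{x,\gd y\}$ (up to the chosen sign convention), ensuring that the bracket descends to $H^\bullet(\hat\cM)$; this amounts to a direct expansion of $\gd$ applied to each brace term using the cosimplicial-compatibility relations \eqref{opere1}--\eqref{opere2} and the operadic axioms \eqref{danton}, carried out in the same telescoping style as in the proof of Lemma \ref{kettensaege}.

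The heart of the matter is the Jacobi identity, which I would derive from the pre-Lie relation
\[
(x\{y\})\{z\} - x\{y\{z\}\} = (-1)^{(q-1)(r-1)} \bigl( (x\{z\})\{y\} - x\{z\{y\}\} \bigr)
\]
on $\hat\cM$ itself. This is the classical brace calculation: expanding the braces via \eqref{bracytracy} produces a double sum over insertion positions; terms where the $z$-slot lands strictly inside the $y$-slot cancel via the middle case of the operadic associativity \eqref{danton}, while the remaining terms, in which $y$ and $z$ occupy disjoint positions in $x$, are manifestly $(y,z)$-symmetric with the stated sign. Note that this step uses only the operad axioms of $\hat\cM$ and not the cosimplicial-compatibility identities. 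Once it is in place, the standard fact that a graded pre-Lie product yields a graded Lie bracket upon antisymmetrisation \eqref{bracket-po-packet} delivers the Jacobi identity on cohomology.

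Finally, the Leibniz identity \eqref{leibniz}, namely $\{x,y\cup z\} = \{x,y\}\cup z + (-1)^{(p-1)q}\, y\cup\{x,z\}$ on $H^\bullet(\hat\cM)$, is assembled from \eqref{cuphomo1} and \eqref{cuphomo2}: substituting both into $\{x,y\cup z\} = x\{y\cup z\} - (-1)^{(p-1)(q+r-1)}(y\cup z)\{x\}$ produces precisely the desired right-hand side plus a $\gd$-exact contribution $\pm\gd F(x,y,z)$ together with terms carrying $\gd x$, $\gd y$ or $\gd z$, all of which vanish on cohomology classes. I expect the main obstacle to be not conceptual but rather the sign bookkeeping, both for the descent identity of the bracket and for the final reassembly of \eqref{cuphomo1}--\eqref{cuphomo2} into the Leibniz identity with the correct Gerstenhaber sign convention.
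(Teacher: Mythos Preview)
Your proposal is correct and follows essentially the same route as the paper. The paper's proof is terser: it invokes Lemma~\ref{kettensaege} for the cup product and its graded commutativity, refers to \cite{Ger:TCSOAAR} for antisymmetry and Jacobi (as you do via the pre-Lie relation), and then assembles \eqref{cuphomo1}--\eqref{cuphomo2} from Lemma~\ref{leibnizuptohomotopy} into the Leibniz identity; the only difference is that you are more explicit about the descent of the bracket to cohomology, which the paper leaves implicit (it in fact follows directly from \eqref{anachroniste} upon antisymmetrisation, so a separate telescoping computation is not needed).
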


\begin{proof}
  The product structure is given by the cup product \eqref{posaunehierposaunedort}, its graded commutativity on cohomology following from \eqref{anachroniste}, where all terms involving $\gd$ disappear; which, hence, results into 
$$
x \cup y = (-1)^{\sss pq} y \cup x.
$$
The (Gerstenhaber) bracket, as for any operad,
is defined by means of the graded commutator of the braces,
  $$
\{x, y \} := x\{y\} - (-1)^{\sss (p-1)(q-1)} y \{x\}
  $$
for $x \in \hat\cM(p)$, $y \in \hat\cM(q)$,  see \cite[Thm.~1]{Ger:TCSOAAR}, or Eq.~\eqref{bracket-po-packet} in \S\ref{pamukkale1}, which guarantees the (graded) antisymmetry and the (graded) Jacobi identity in a customary way, see {\em op.~cit.}\ for explicit details.
Finally, on cohomology all terms involving~$\gd$ in Eq.~\eqref{cuphomo2} as well disappear, and along with \eqref{cuphomo1} 
assemble to
\begin{equation*}
\{ x, y \cup z\} = \{x, y\} \cup z + (-1)^{\sss (p-1)q} y \cup \{x, z\}, 
  \end{equation*}
for any $z \in \hat\cM$, that is, the {Leibniz identity} \eqref{leibniz}. Hence $H^\bull(\hat \cM) := H^\bull(\hat\cM, \gd)$ turns into a Gerstenhaber algebra.
\end{proof}

Although all proofs  in this section so far have been performed, due to our principal objective, with respect to the cyclic dual $\hat\cM$  
by explicitly using the relations \eqref{opere1}--\eqref{unschoen2}, it is clear that the same proofs can be carried out, {\em mutatis mutandis}, by the analogous (and more general) relations \eqref{besser1}--\eqref{unschoen} in Definition \ref{compitompi}. Combining this observation with the fact that the associativity relations \eqref{SchlesischeStr} 
for an opposite module over an operad $(\cO, \mu, e)$ applied to the elements $\mu$ and $e$ are equivalent to the compatibility relations between cofaces and codegeneracies of the cosimplicial $k$-module given by Eqs.~\eqref{journuit}, 
we can immediately state: 

\begin{cor}
  \label{alsomain}
  Let $\cP$ be a cosimplicial-compatible nonsymmetric operad in the sense of Definition \ref{compitompi}. Then its cohomology groups $H(\cP)$, induced by the cosimplicial structure, form a Gerstenhaber algebra.
\end{cor}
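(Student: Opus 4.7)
The plan is to transport the proof of Proposition \ref{gerstasgerstcan} from the cyclic dual $\hat\cM$ to an arbitrary cosimplicial-compatible operad $\cP$, replacing throughout the specific expressions $e \bullet_i -$ and $\mu \bullet_j -$ by the abstract cofaces $\gd_i$ and codegeneracies $\gs_j$. Concretely, on $\cP$ I define the cup product $x \cup y := (\gd_0 y) \circ_1 x$, the braces $x\{y\} := \Sum_{i=1}^{p} (-1)^{\sss (q-1)(i-1)} x \circ_i y$ for $x \in \cP(p)$, $y \in \cP(q)$, the Gerstenhaber bracket $\{x,y\} := x\{y\} - (-1)^{\sss (p-1)(q-1)} y\{x\}$, and the coboundary $\gd := \Sum_{i=0}^{p+1}(-1)^i \gd_i$ arising from the given cosimplicial structure. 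That $(\cP^\bull, \gd)$ is actually a cochain complex is automatic from $(\cP^\bull, \gd_\bull, \gs_\bull)$ being cosimplicial.

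The heart of the argument is to re-derive, in this abstract setting, the analogues of Lemmas \ref{kettensaege} and \ref{leibnizuptohomotopy}. Their proofs are pure symbolic manipulations that only rely on (i) the operadic associativity identities \eqref{danton}, and (ii) the compatibility relations \eqref{opere1}--\eqref{opere6} between $\bullet_i$ and $\circ_j$. Ingredient (i) is intrinsic to any operad; ingredient (ii) was obtained by substituting $\gd_i x = e \bullet_i x$, $\gs_j x = \mu \bullet_j x$ into the general axioms \eqref{besser1}--\eqref{besser3} of Definition \ref{compitompi}. Hence every invocation of \eqref{opere1}--\eqref{opere6} in the earlier proofs can be replaced, line-by-line, by the corresponding instance of \eqref{besser1}--\eqref{besser3}. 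This produces, on the cochain level, the associativity and the graded commutativity of $\cup$ up to $\gd$-exact terms, the graded Leibniz rule $\gd(x \cup y) = \gd x \cup y + (-1)^{\sss p} x \cup \gd y$, and the brace identities that yield the Leibniz rule for $\{\cdot,\cdot\}$ with respect to $\cup$ up to $\gd$-exact terms. The antisymmetry and graded Jacobi identity of $\{\cdot,\cdot\}$ follow from its definition via the braces in the standard way of \cite{Ger:TCSOAAR}, needing only the pre-Lie-like consequences of \eqref{danton}.

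Passing to $H^\bull(\cP, \gd)$, the homotopy error terms in each of these relations, which are all of the form $\gd(\cdot)$ or involve $\gd$ applied to one of the arguments, vanish on cohomology. We conclude that $\cup$ descends to a graded commutative associative product, $\{\cdot,\cdot\}$ descends to a graded Lie bracket of degree $-1$, and the Leibniz identity \eqref{leibniz} holds on the nose, so $(H^\bull(\cP), \cup, \{\cdot,\cdot\})$ is a Gerstenhaber algebra.

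The main obstacle, such as it is, is merely bookkeeping: the index ranges at the three boundary cases in \eqref{besser1}--\eqref{besser3} are shifted in comparison with those in \eqref{opere1}--\eqref{opere6}, so each step of the computations performed in the proofs of Lemma \ref{kettensaege} and \ref{leibnizuptohomotopy} must be re-examined to check that the correct case of the abstract relation is being applied. No new identity is required, however, since the cosimplicial compatibility axioms were tailored precisely so that Gerstenhaber's original argument, together with the proofs of Lemmas \ref{kettensaege} and \ref{leibnizuptohomotopy}, transfers verbatim once the dictionary $e \bullet_i \leftrightsquigarrow \gd_i$, $\mu \bullet_j \leftrightsquigarrow \gs_j$ is applied.
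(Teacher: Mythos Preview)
Your proposal is correct and follows essentially the same approach as the paper: the paper states, just before the corollary, that the proofs of Lemmas~\ref{kettensaege} and~\ref{leibnizuptohomotopy} (and hence of Proposition~\ref{gerstasgerstcan}) were carried out using the relations \eqref{opere1}--\eqref{opere6}, which are nothing but the abstract relations \eqref{besser1}--\eqref{besser3} specialised to the cyclic dual, so the same proofs transfer {\em mutatis mutandis}. One small point you gloss over: the proofs of Lemma~\ref{kettensaege} also invoke \eqref{SchlesischeStr} (the opposite-module associativity, applied to the special elements $\mu$ and $e$), which does not fall under your items (i) or (ii); the paper explicitly notes that these instances are equivalent to the standard cosimplicial identities among the $\gd_i$ and $\gs_j$, which are available in any cosimplicial $k$-module, so your dictionary $e\bullet_i \leftrightsquigarrow \gd_i$, $\mu\bullet_j \leftrightsquigarrow \gs_j$ still covers them.
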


\vspace*{.1cm}
\begin{center}
* \quad * \quad *
\end{center}
\vspace*{.15cm}

The next level of complexity will be to add the cyclic operator in order to promote the Gerstenhaber structures of Proposition \ref{gerstasgerstcan} and Corollary \ref{alsomain}, respectively, to those of BV algebras:

\begin{theorem}
  \label{main}
  Let $(\cO, \mu, e)$ be an operad with multiplication.
If the cyclic dual $\hat \cM$ of a cyclic opposite $\cO$-module $(\cM, t)$
is at the same time a cosimplicial and cocyclic-compatible operad in the sense of Definition \ref{compitompi},
then the identity
\begin{equation}
\label{nunjanaja2}
\begin{split}
  \{x,y\} &\, = \,
x\{y\} - (-1)^{\sss (p-1)(q-1)} y \{x\}
  \\
  & \, = \, 
(-1)^{\sss (q-1)p} B(y \cup x)
    - (-1)^{\sss p} Bx \cup y
        - (-1)^{\sss p(q-1)} By \cup x  
\\
& \, \, \quad
+ (-1)^{\sss pq} \delta(S_xy)  + (-1)^{\sss p} S_{\gd x} y - (-1)^{\sss pq} S_{x}\gd y
\\
  & \, \, \quad  - (-1)^{\sss q} \delta(S_y x)  + (-1)^{\sss p(q-1)} S_{\gd y} x  - (-1)^{\sss p+q} S_y \gd x
\end{split}
\end{equation}
%
holds
for all $x \in \hat\cM(p)$ and $y \in \hat\cM(q)$
on the normalised
complex, 
where
\begin{eqnarray}
  \nonumber
  \gd x &=& \Sum_{i=0}^{p+1} (-1)^i e \bullet_i x,
  \\[.2mm]
 \nonumber
  x \cup y & =& (e \bullet_0 y) \circ_{1} x, 
\\[.2mm]
  \nonumber
  B x & = & \Sum^{p-1}_{i=0} (-1)^{\sss \sss (p-1)(i-1)} t^i ( \mu \bullet_0 tx),
    \end{eqnarray}
as introduced before, along with
  \begin{eqnarray}
    \label{humanoid2}
  S_x y & := &
\Sum^{p-1}_{i=1} \Sum^i_{j=1} (-1)^{\scriptscriptstyle (q-1)i + (p+q)j + pq}  \, t^{j-1} (\mu \bullet_0 t x) \circ_{p-i+j-1} y
\end{eqnarray}
as an element in $\hat \cM(p+q-2)$. 
\end{theorem}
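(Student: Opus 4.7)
The plan is to prove the homotopy formula \eqref{nunjanaja2} directly on cochains by expanding both sides in terms of the operadic composition $\circ_i$, the cyclic operator $t$, and the opposite action $\bullet_i$, and then matching the resulting double sums by means of the compatibility identities of \S\ref{mendelssohn} together with the cyclic relations \eqref{unschoen2}. The overall strategy follows the pattern of Menichi's theorem \cite{Men:BVAACCOHA}, but adapted to the cyclic dual: whereas in the operadic setting the cofaces are $\gd_0(-) = \mu \circ_2 -$, here the role of the zeroth coface is played by $e \bullet_0 -$, while the cyclic boundary $B$ is built from $\mu \bullet_0 t(-)$ instead of $\gs_{p-1}\tau(-)$. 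Since, as already used in the proof of Lemma \ref{kettensaege}, the associativity of the opposite action translated through \eqref{journuit} gives exactly the cosimplicial-compatibility axioms of Definition \ref{compitompi}, the same algebraic manipulations carry through with the obvious dictionary $\mu \circ_2 \leftrightarrow e \bullet_0$, $\tau \leftrightarrow t$, $\gs_{p-1}\tau \leftrightarrow \mu \bullet_0 t$.

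First, I would rewrite $B(y \cup x) = B\bigl((e\bullet_0 x)\circ_1 y\bigr)$ by applying $\mu \bullet_0 t(-)$ and then expanding $t^i$ step by step using the cyclic compatibility \eqref{unschoen2}. The case $j=1$ of \eqref{unschoen2} produces the crucial flip that exchanges the roles of $x$ and $y$, and is exactly what converts $t$ applied to $y\cup x$ into terms containing $\mu \bullet_0 t x$ and $\mu \bullet_0 t y$ composed with the other argument via $\circ_i$. Collecting these after applying $t^i$ for $i = 0, \dots, p+q-2$ gives, on the normalised complex, a sum that splits into pieces matching $(-1)^p Bx \cup y$ and $(-1)^{p(q-1)} By \cup x$ (these correspond to powers $t^i$ for which the operation $\mu \bullet_0 t$ sits ``outside'' one of the two factors, by the two presentations \eqref{posaunehierposauneda} of the cup product), plus a leftover double-sum indexed by pairs $(i,j)$ with $1 \le j \le i \le p-1$. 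I would then identify the latter, via a re-indexing $j \mapsto p-i+j-1$ and careful sign bookkeeping, with precisely the difference $\delta(S_x y) - S_{\gd x}y + (-1)^{q}S_x \gd y$ appearing in \eqref{nunjanaja2}.

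The second step is to expand $x\{y\}$ using \eqref{bracytracy}, write $\mathrm{id} = t^{-(p-1)}\circ t^{p-1}$ in each term $x \circ_i y$, and then use \eqref{unschoen2} repeatedly together with the normalisation (which annihilates all terms containing $e \bullet_j(-)$ for $1 \le j \le p$) to convert $x \circ_i y$ into an expression where $t$ acts on the outside and $\mu \bullet_0 t(-)$ appears inside. Symmetrically for $y\{x\}$; the flip case $j=p$ of \eqref{unschoen2} is responsible for the mismatch between $x\{y\}$ and $y\{x\}$, which is exactly what produces the term $B(y \cup x)$ with its characteristic sign $(-1)^{(q-1)p}$. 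Matching the resulting sums with the expansion obtained in the first step yields \eqref{nunjanaja2} up to boundary terms, which are absorbed into $\delta S$, $S\gd$ terms.

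The main obstacle is purely combinatorial and notational: the double sums for $\delta(S_x y)$, $S_{\gd x} y$, $S_x \gd y$ each split into several pieces according to the three cases of \eqref{opere1}--\eqref{opere2} (and into further sub-cases at the boundaries of the index ranges where the middle line of \eqref{opere3} interacts with \eqref{opere4}), and one must verify that all pieces cancel pairwise except for those reassembling into $B(y\cup x)$, $Bx \cup y$, and $By \cup x$. Tracking the signs $(-1)^{(q-1)i+(p+q)j+pq}$ through re-indexings like $i \mapsto i\pm 1$, $j \mapsto p-i+j-1$ and through the flip substitution forced by the $j=1$, $j=p$ cases of \eqref{unschoen2} is the delicate point; once the bookkeeping is set up in analogy with \cite[Thm.~1.5]{Men:BVAACCOHA} and the computation is done on the normalised complex (so that all $e \bullet_j(-)$ with $1\le j \le p$ disappear), everything falls into place. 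Descending to cohomology, the $\gd$-terms vanish and \eqref{nunjanaja2} reduces to the BV identity \eqref{bvbv}, completing the bridge from Proposition \ref{gerstasgerstcan} to the claimed BV structure.
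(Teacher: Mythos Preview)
Your overall strategy---expand everything and match terms in the style of Menichi---is the same as the paper's, and you correctly identify \eqref{unschoen2} as the source of the $x\leftrightarrow y$ flip and the three-case split of \eqref{opere1}--\eqref{opere2} as the main bookkeeping device. However, the specific plan you lay out contains several concrete errors that would derail the computation.

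First, your claim that normalisation ``annihilates all terms containing $e \bullet_j(-)$ for $1\le j \le p$'' is backwards: in the cocyclic structure \eqref{journuit} the $e\bullet_i$ are the \emph{cofaces} and the $\mu\bullet_j$ are the \emph{codegeneracies}, so normalisation kills the latter, not the former. Second, your Step~1 asserts that $B(y\cup x)$ decomposes as $Bx\cup y + By\cup x$ plus a leftover matching the $S$-terms; the paper's computation shows this is not what happens. One obtains $(-1)^{p(q-1)}B(y\cup x)=(1)+(2)$ with $(1)=\sum_{i=1}^p(-1)^{pq+(p-1)i+p}\,t^ix\circ_i y$ and $(2)$ the symmetric sum, and neither piece is $Bx\cup y$. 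The term $-(-1)^p Bx\cup y$ instead cancels against a piece of $(-1)^p S_{\gd x}y$ (the summand the paper calls $(8^{\mathrm{iii}})$), while $(1)$ cancels against another piece $(9^{\mathrm i})$ of the same expansion. Third, and most importantly, your Step~2 proposal to insert $\mathrm{id}=t^{-(p-1)}\!\circ t^{p-1}$ into $x\circ_i y$ to manufacture $\mu\bullet_0 t(-)$ terms is not how the brace arises. In the paper, $x\{y\}$ emerges directly as the residual term $(10^{\mathrm i})=\sum_{i=1}^p(-1)^{(q-1)(i-1)}x\circ_i y$ from the boundary summands $k=0$ and $k=p+1$ in the expansion of $(-1)^pS_{\gd x}y$, after the identities $t(e\bullet_{p+1}x)=e\bullet_0 x$ and $\mu\bullet_0(e\bullet_0 x)=x$ collapse those summands. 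The remaining work is then a cascade of re-indexings showing that the many sub-sums $(4^{\mathrm i}),\ldots,(8^{\mathrm{vi}})$ cancel pairwise. So the right organising principle is not ``$B(y\cup x)$ versus the rest'' but rather ``expand the right-hand side completely, observe the symmetry in $x$ and $y$, and extract the brace from $S_{\gd x}y$.''
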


\begin{proof}
 The homotopy formula \eqref{nunjanaja2} is proven by a meticulous analysis of all appearing terms, which are subsequently shown to equal all out by means of an enhanced double sum yoga plus numerous summation re-indexings.
  We are going to prove this only for the case $p, q \geq 1$, the case of two zero cochains being trivial, whereas the case for either $p=0$ or $q=0$ is similar to what follows, but (much) simpler.
      
  Let us therefore develop all terms in Eq.~\eqref{nunjanaja2} by using the relations \eqref{opere1}--\eqref{unschoen2}.
  To start with, 
 \begin{eqnarray*}
   &&  (-1)^{\sss p(q-1)} B(y \cup x)
   \\
& \overset{\sss \eqref{posaunehierposaunedort}}{=} &
(-1)^{\sss p(q-1)}   B\big((e \bullet_0 x) \circ_1 y \big)
   \\
   & \overset{\sss \eqref{humanoid1}}{=} &
 \Sum^{p+q-1}_{i=0} (-1)^{\sss pq + (p-1)i +1} \, t^i \pig(\mu \bullet_0 t\big( (e \bullet_0 x) \circ_1 y)\big) \pig) 
  \\
   & \overset{\sss \eqref{unschoen2}, \eqref{lagrandebellezza1}}{=} &
\Sum^{p+q-1}_{i=0}  (-1)^{\sss pq + (p-1)i +1} \, t^i \pig(\mu \bullet_0 \big( (e \bullet_1 tx) \circ_2 y)\big) \pig) 
\\
   & \overset{\sss \eqref{SchlesischeStr}, \eqref{opere3}}{=} &
\Sum^{p+q-1}_{i=0}  (-1)^{\sss pq + (p-1)i +1} \, t^i (tx \circ_1 y) 
 \end{eqnarray*}
 \begin{eqnarray*}
& \overset{\sss \eqref{unschoen2} 
    }{=} &
    \Sum^{p-1}_{i=0}  (-1)^{\sss pq + (p-1)i +1} \, t^{i+1} x \circ_{i+1} y
    +
    \Sum^{p+q-1}_{i=p}  (-1)^{\sss pq + (p-1)i +1} \, t^{i} (tx \circ_{1} y) 
    \\
    & \overset{\sss \eqref{unschoen2} 
    }{=} &
    \Sum^{p}_{i=1}  (-1)^{\sss pq  + (p-1)(i-1)+1} \,  t^{i} x \circ_{i} y
    +
    \Sum^{q-1}_{i=0}  (-1)^{\sss pq  + (p-1)i+1} \,  t^{p+i} (tx \circ_{1} y) 
    \\
    & \overset{\sss \eqref{unschoen2}
    }{=} &
    \Sum^{p}_{i=1}  (-1)^{\sss  pq  + (p-1)i+p} \,  t^{i} x \circ_{i} y
    +
    \Sum^{q-1}_{i=0}  (-1)^{\sss  pq  + (p-1)i+1} \,  t^{i} (ty \circ_{1} t^{p+1} x) 
 \\
    & \overset{\sss \eqref{unschoen2}, \eqref{lagrandebellezza2}
    }{=} &
    \Sum^{p}_{i=1}  (-1)^{\sss  pq  + (p-1)i+p} \,  t^{i} x \circ_{i} y
    +
    \Sum^{q}_{i=1}  (-1)^{\sss  pq  + (p-1)i+p} \,  t^{i}y \circ_i  x. 
    \\
  &  =: & (1) + (2).
 \end{eqnarray*}
 Hence, up to signs, the relation \eqref{nunjanaja2} is symmetric in $x$ and $y$ and it shall turn  out that (with the respective signs) the brace $x \{y\}$ is generated by the terms (1) along with $Bx \cup y$, $\gd(S_xy)$, $S_{\gd x} y$, as well as $S_{x} \gd y$; whereas $y \{x\}$ is given by the terms with $x$ and $y$ exchanged. This implies that we only have to prove {\em half} of the homotopy formula, the other part being completely analogous.
 In fact,
 \begin{eqnarray*}
  - (-1)^{\sss p} Bx \cup y
& \overset{\sss \eqref{posaunehierposaunedort}, \eqref{humanoid1}}{=} &
\Sum^{p-1}_{i=0} (-1)^{\sss (p-1)i} \,  (e \bullet_0 y) \circ_1 t^i (\mu \bullet_0 tx) 
\\
& \overset{\sss \eqref{opere2} }{=} &
\Sum^{p-1}_{i=0} (-1)^{\sss  (p-1)i} \,  \big( e \bullet_p t^i (\mu \bullet_0 tx)\big) \circ_p y
 \end{eqnarray*}
 \begin{eqnarray*}
& \overset{\sss \eqref{lagrandebellezza1} }{=} &
\Sum^{p}_{i=1} (-1)^{\sss  (p-1)(i-1)} \,  t^{i-1} \big( e \bullet_{p-i+1} (\mu \bullet_0 tx)\big) \circ_p y
\\
& =: & (3),
\end{eqnarray*}
while, using a somewhat unconventional but nevertheless self-explanatory way of writing sums, one decomposes:
 \begin{eqnarray*}
   (-1)^{\sss pq} \gd(S_xy)
& \overset{\sss \eqref{humanoid2}, \eqref{stakker}}{=} &
   \Sum^{p-1}_{i=1} \Sum^{i}_{j=1} \Sum^{p+q-1}_{k=0}
   (-1)^{\sss q(i-j) + i + pj +k} \,  e \bullet_k \pig( t^{j-1} (\mu \bullet_0 tx) \circ_{p-i+j-1} y\pig) 
\\
& \overset{\sss  }{=} &
\Big(
\Sum^{p-1}_{i=1} \Sum^{i}_{j=1} \Sum^{p-i+j-2}_{k=0}
+ \Sum^{p-1}_{i=1} \Sum^{i}_{j=1} \, \Sum^{p+q-i+j-2}_{k=p-i+j-1}
+ \Sum^{p-1}_{i=1} \Sum^{i}_{j=1} \, \Sum^{p+q-1}_{k=p+q-i+j-1}
\Big)
\\
&& \hspace*{2.15cm}
(-1)^{\sss  q(i-j) + i + pj +k} \,  e \bullet_k \pig( t^{j-1} (\mu \bullet_0 tx) \circ_{p-i+j-1} y\pig) 
\\
& =: & (4) + (5) + (6).
 \end{eqnarray*}
 On these three terms let us apply \eqref{opere1} and subsequent re-indexing if need be, the outcome of which being:
\begin{eqnarray*}
   (4)
  & \overset{\sss \eqref{opere1}
   }{=} &
   \Sum^{p-1}_{i=1} \Sum^{i}_{j=1} \Sum^{p-i+j-2}_{k=0}
   (-1)^{\sss  q(i-j) + i + pj +k} \,  \big(e \bullet_k t^{j-1} (\mu \bullet_0 tx) \big) \circ_{p-i+j} y
\\
   & \overset{\sss 
   }{=:} & (4^\textrm{i}),
\end{eqnarray*} 
 as well as
 \begin{eqnarray*}
   (6)
   & \overset{\sss \eqref{opere1}
   }{=} &
   \Sum^{p-1}_{i=1} \Sum^{i}_{j=1} \, \Sum^{p+q-1}_{k=p+q-i+j-1}
   (-1)^{\sss  q(i-j) + i + pj +k} \,  \big( e \bullet_{k-q+1} t^{j-1} (\mu \bullet_0 tx) \big) \circ_{p-i+j-1} y
   \\
& \overset{\sss 
   }{=} &
   \Sum^{p-1}_{i=1} \Sum^{i}_{j=1} \, \Sum^{p}_{k=p-i+j}
   (-1)^{\sss  q(i-j) + i + pj +k+1} \,  \big( e \bullet_{k} t^{j-1} (\mu \bullet_0 tx) \big) \circ_{p-i+j-1} y
\\
   & \overset{\sss 
   }{=:} & (6^\textrm{i}),
   \end{eqnarray*} 
  along with
 \begin{eqnarray*}
   (5)
   & \overset{\sss \eqref{opere1}
   }{=} &
   \Sum^{p-1}_{i=1} \Sum^{i}_{j=1} \Sum^{q}_{k=1}
   (-1)^{\sss  q(i-j) + (p-1)j + p + k} \,  t^{j-1} (\mu \bullet_0 tx) \circ_{p-i+j-1} (e \bullet_k y)
   \\
    & \overset{\sss 
   }{=:} & (5^\textrm{i}).
\end{eqnarray*} 
 Next, note that $(5^\textrm{i})$ appears with opposite sign as a summand in the next term in the homotopy formula \eqref{nunjanaja2}:
 \begin{eqnarray*}
&&   - (-1)^{\sss pq} S_{x} \gd y
\\
   & \overset{\sss 
   \eqref{humanoid2}, \eqref{stakker}}{=} &
   \Sum^{p-1}_{i=1} \Sum^{i}_{j=1} \Sum^{q+1}_{k=0}
   (-1)^{\sss  q(i-j) + (p-1) j + p + k} \,  t^{j-1} (\mu \bullet_0 tx) \circ_{p-i+j-1} (e \bullet_k y)
   \\
    & \overset{\sss \eqref{opere2}}
   {=} &
     -(5^\textrm{i})
     +  \Sum^{p-1}_{i=1} \Sum^{i}_{j=1} 
     (-1)^{\sss q(i-j) + (p-1) j + p} \,  t^{j-1} (\mu \bullet_0 tx) \circ_{p-i+j-1} (e \bullet_0 y)
     \\
     &&
\qquad\ \
     +  \Sum^{p-1}_{i=1} \Sum^{i}_{j=1} 
   (-1)^{\sss q(i-j+1) + (p-1)(j-1)} \,  t^{j-1} (\mu \bullet_0 tx) \circ_{p-i+j-1} (e \bullet_{q+1} y).
  \end{eqnarray*} 
 Thus, the triple sum $(5^\textrm{i})$ disappears, whereas on the last two summands above apply \eqref{opere2} to obtain:
%
%
 \begin{eqnarray*}
&&
   \Sum^{p-1}_{i=1} \Sum^{i}_{j=1} 
   (-1)^{\sss q(i-j) + (p-1) j + p} \,  t^{j-1} \big(e \bullet_{p-i+j-1} t^{j-1} (\mu \bullet_0 tx)\big) \circ_{p-i+j} y   
     \\
     &&
\quad
+   \Sum^{p-1}_{i=1} \Sum^{i}_{j=1} 
     (-1)^{\sss q(i-j+1) + (p-1)(j-1)} \,  t^{j-1}  \big(e \bullet_{p-i+j-1} t^{j-1} (\mu \bullet_0 tx) \big) \circ_{p-i+j-1} y
\\
& \overset{\sss }
   {=}: &
(4^\textrm{ii}) + (7),
 \end{eqnarray*} 
 where one observes that one can reassemble $(4^\textrm{i})$ and $(4^\textrm{ii})$
 and afterwards decompose differently as
 \begin{eqnarray*}
   && (4^\textrm{i}) + (4^\textrm{ii})
\\
   & = &
   \Sum^{p-1}_{i=1} \Sum^{i}_{j=1} \Sum^{p-i+j-1}_{k=0}
   (-1)^{\sss q(i-j) + i+ pj + k} \,  \big(e \bullet_k t^{j-1} (\mu \bullet_0 tx) \big) \circ_{p-i+j} y
\\
&
\overset{\sss 
}{=}
&
\Big(
\Sum^{p-1}_{i=1} \Sum^{i}_{j=1} \Sum^{j-1}_{k=0}
+
\Sum^{p-1}_{i=1} \Sum^{i}_{j=1} \Sum^{p-i+j-1}_{k=j}
\Big)
   (-1)^{\sss q(i-j) + i+ pj + k} \,  \big(e \bullet_k t^{j-1} (\mu \bullet_0 tx) \big) \circ_{p-i+j} y
\\
&
\overset{\sss 
}{=:}
&
(4^\textrm{iii}) + (4^\textrm{iv}).
 \end{eqnarray*}
As a next step, let us isolate the summands $j=i$ and $j= i-1$ in $(4^\textrm{iii})$:
 \begin{eqnarray*}
(4^\textrm{iii})
& = &
   \Sum^{p-1}_{i=3} \Sum^{i-2}_{j=1} \Sum^{j-1}_{k=0}
   (-1)^{\sss q(i-j) + i+ pj + k} \,  \big(e \bullet_k t^{j-1} (\mu \bullet_0 tx) \big) \circ_{p-i+j} y
   \\
   &&
   \quad
   +
    \Sum^{p-1}_{i=1} \Sum^{i-1}_{k=0}
   (-1)^{\sss (p-1)i + k} \,  \big(e \bullet_k t^{i-1} (\mu \bullet_0 tx) \big) \circ_{p} y
    \\
    && \qquad
    +
     \Sum^{p-1}_{i=2} \Sum^{i-2}_{k=0}
   (-1)^{\sss q + p(i-1) + i + k} \,  \big(e \bullet_k t^{i-2} (\mu \bullet_0 tx) \big) \circ_{p-1} y
   \\
   &&
\quad 
   \\
&
\overset{\sss 
}{=:}
&
(4^\textrm{v}) + (4^\textrm{vi}) + (4^\textrm{vii}),
 \end{eqnarray*}
  and continue by isolating the summand $j=i$ in the triple sum $(4^\textrm{iv})$ as well:
 \begin{eqnarray*}
(4^\textrm{iv})
& = &
   \Sum^{p-1}_{i=2} \Sum^{i-1}_{j=1} \Sum^{p-i+j-1}_{k=j}
   (-1)^{\sss q(i-j) + i+ pj + k} \,  \big(e \bullet_k t^{j-1} (\mu \bullet_0 tx) \big) \circ_{p-i+j} y
   \\
   &&
   \quad
   +
    \Sum^{p-1}_{i=1} \Sum^{p-1}_{k=i}
   (-1)^{\sss (p-1)i +k} \,  \big(e \bullet_k t^{i-1} (\mu \bullet_0 tx) \big) \circ_{p} y
    \\
&
\overset{\sss 
}{=:}
&
(4^\textrm{viii}) + (4^\textrm{ix}).
 \end{eqnarray*}
 The last summand that appears in the homotopy formula \eqref{nunjanaja2} that has to be considered is $(-1)^{\sss p} S_{\gd x} y$, which, using $t ( e \bullet_{p+1} x) = e \bullet_0 x$ for $x \in \cM(p)$, that is, Eq.~\eqref{cine40}, can be expressed by means of Eqs.~\eqref{lagrandebellezza1} and \eqref{SchlesischeStr} as:
 \begin{eqnarray*}
&&   (-1)^{\sss p} S_{\gd x} y
\\
   & \overset{\sss 
   \eqref{humanoid2}, \eqref{stakker}}{=} &
   \Sum^{p}_{i=1} \Sum^{i}_{j=1} \Sum^{p+1}_{k=0}
   (-1)^{\sss q(i-j+1) + i+ (p-1)j + k + p(q-1)} \,  t^{j-1} \big(\mu \bullet_0 t (e \bullet_k x) \big) \circ_{p-i+j} y
   \\
   & \overset{\sss 
   }
   {=} &
   \Sum^{p}_{i=1} \Sum^{i}_{j=1} \Sum^{p}_{k=1}
   (-1)^{\sss q(i-j+1) + i+ (p-1)j + k + p(q-1)} \,  t^{j-1} \big(\mu \bullet_0 t (e \bullet_k x) \big) \circ_{p-i+j} y
   \\
   && \quad
   +
      \Sum^{p}_{i=1} \Sum^{i}_{j=1} 
   (-1)^{\sss q(i-j+1) + i+ (p-1)j + p(q-1)} \,  t^{j} x \circ_{p-i+j} y
      \\
      &&
      \quad\quad
      +
      \Sum^{p}_{i=1} \Sum^{i}_{j=1} 
   (-1)^{\sss q(i-j+1) + i+ (p-1)j + pq-1} \,  t^{j-1} x \circ_{p-i+j} y
   \\
   & \overset{\sss 
   }
   {=:} &
(8) + (9) + (10).
  \end{eqnarray*} 
 Closely checking, the last two summands partially cancel by re-indexing the sums first by $i \mapsto i -1$, $j \to j-1$, and then by $i \mapsto p-i+1$:
 \begin{eqnarray*}
   &&  (9) + (10)
   \\
   & \overset{\sss 
   }{=} &
       \Sum^{p-1}_{i=1} \Sum^{i}_{j=1} 
   (-1)^{\sss q(i-j+1) + i+ (p-1)j + p(q-1)} \,  t^{j} x \circ_{p-i+j} y
            +
         \Sum^{p}_{j=1} 
   (-1)^{\sss (p-1)(j-1) + pq} \,  t^{j} x \circ_{j} y
         \\
         &&  
         +
         \Sum^{p}_{i=2} \Sum^{i}_{j=2} 
   (-1)^{\sss q(i-j+1) + i+ (p-1)j + pq-1} \,  t^{j-1} x \circ_{p-i+j} y
              +
         \Sum^{p}_{i=1} 
   (-1)^{\sss (q-1)(p-i)} \,  x \circ_{p-i+1} y
          \\
   & \overset{\sss 
   }{=} &
         \Sum^{p}_{j=1} 
   (-1)^{\sss (p-1)(j-1) + pq} \,  t^{j} x \circ_{j} y
                +
         \Sum^{p}_{i=1} 
   (-1)^{\sss (q-1)(i-1)} x \circ_{i} y
          \\
          & \overset{\sss 
   }
            {=:} &
            (9^\textrm{i}) + (10^\textrm{i}),
  \end{eqnarray*} 
 and one observes that not only $(1) = - (9^\textrm{i})$, but in particular
 \begin{equation}
   \label{hierdoch}
(10^\textrm{i}) = x \{ y \}
 \end{equation}
 is the brace that constitutes `the first half' of the Gerstenhaber bracket $\{x,y\}$ in Eq.~\eqref{nunjanaja2}. We continue by using \eqref{SchlesischeStr} and \eqref{lagrandebellezza1} on the various summands of $(8)$ and subsequently isolating the summand $i=j$, as well as afterwards inside the latter isolating the summand for which $k=p-i+1$:
\begin{eqnarray*}
  (8) 
    & \overset{\sss 
      \eqref{SchlesischeStr}, \eqref{lagrandebellezza1}
    }{=} &
        \Sum^{p}_{i=1} \Sum^{i}_{j=1} \Sum^p_{k=1} 
       (-1)^{\sss q(i-j) + (p-1)j + i + k +1} \,  t^{j-1} \big( e \bullet_k (\mu \bullet_0 tx) \big)  \circ_{p-i+j} y
         \end{eqnarray*}
 \begin{eqnarray*}
        & \overset{\sss 
    }{=} &
        \Sum^{p}_{i=2} \Sum^{i-1}_{j=1} \Sum^p_{k=1} 
       (-1)^{\sss q(i-j) + (p-1)j + i + k +1} \,  t^{j-1} \big( e \bullet_k (\mu \bullet_0 tx) \big)  \circ_{p-i+j} y
        \\
        &&
        \quad
+        \Sum^{p-1}_{i=1} \Sum^{p-i}_{k=1} 
       (-1)^{\sss pi+k+1} \,  t^{i-1} \big( e \bullet_k (\mu \bullet_0 tx) \big)  \circ_{p} y
\\
&&
\quad\quad 
+        \Sum^{p}_{i=1}  
       (-1)^{\sss (p-1)(i-1) +1} \,  t^{i-1} \big( e \bullet_{p-i+1} (\mu \bullet_0 tx) \big)  \circ_{p} y
\\
&&
\quad\quad\quad
+        \Sum^{p}_{i=2} \, \Sum^{p}_{k=p-i+2} 
       (-1)^{\sss pi+k+1} \,  t^{i-1} \big( e \bullet_k (\mu \bullet_0 tx) \big)  \circ_{p} y
           \\
           & \overset{\sss 
    }
             {=:} &
             (8^\textrm{i}) + (8^\textrm{ii}) + (8^\textrm{iii}) + (8^\textrm{iv}).
   \end{eqnarray*} 
One notices that $(8^\textrm{iii}) = - (3)$, so this term cancels out with $-(-1)^{\sss p} \,  Bx \cup y$. On the other hand, using \eqref{lagrandebellezza1} again and re-indexing $k \mapsto k +i-1$, one computes
\begin{eqnarray*}
   (8^\textrm{ii})
    & \overset{\sss 
     \eqref{lagrandebellezza1}
    }{=} &
         \Sum^{p-1}_{i=1} \Sum^{p-i}_{k=1} 
         (-1)^{\sss pi+k+1} \,  \big( e \bullet_{k+i-1} t^{i-1} (\mu \bullet_0 tx) \big)  \circ_{p} y
         \\
          & \overset{\sss 
    }{=} &
         \Sum^{p-1}_{i=1} \Sum^{p-1}_{k=i} 
         (-1)^{\sss (p-1)i+k+1} \,  \big( e \bullet_{k} t^{i-1} (\mu \bullet_0 tx) \big)  \circ_{p} y
 \\
          & \overset{\sss 
    }{=} &
         -(4^\textrm{ix}),
     \end{eqnarray*} 
which therefore disappear.
In a similar way, but taking \eqref{cine40} into account, and re-indexing twice,  
\begin{eqnarray*}
   (8^\textrm{iv})
    & \overset{\sss 
     \eqref{lagrandebellezza1}, \eqref{cine40}
    }{=} &
         \Sum^{p}_{i=2} \Sum^{i-2}_{k=0} 
         (-1)^{\sss (p-1)i+k+1} \,  \big( e \bullet_{k} t^{i-2} (\mu \bullet_0 tx) \big)  \circ_{p} y
\\
          & \overset{\sss 
    }{=} &
         -(4^\textrm{vi}),
\end{eqnarray*} 
so these terms cancel out as well.
To elaborate on $(8^\textrm{i})$, we (unfortunately) have to split the sum again so as to (as we tacitly did for $(8^\textrm{iv})$ as well) take the {\em flip over} into account, {\em i.e.}, the fact that, for an element $z \in \cM(p-1)$, one has $t (e \bullet_k z) = e \bullet_{k+1} tz$ up to $k = p-1$ but $t (e \bullet_p z) = e \bullet_0 z$. Correspondingly, when applying the higher powers $t^{j-1}$ to $e \bullet_k z$, at the point $k > p - (j-1)$ one needs to restart from the extra composition map $e \bullet_0 z$ again. Taking this into consideration, and after multiple re-indexing $i \mapsto i -1$, $j \mapsto j-1$, and $k \mapsto k+j-1$, one computes
\begin{eqnarray*}
  (8^\textrm{i}) 
    & \overset{\sss 
      \eqref{lagrandebellezza1}
  }{=} &
  \Big(
  \Sum^{p}_{i=2} \Sum^{i-1}_{j=1} \Sum^{p-j+1}_{k=1} 
  +
  \Sum^{p}_{i=2} \Sum^{i-1}_{j=1} \, \Sum^{p}_{k=p-j+2} 
  \Big)
  \\
  && \qquad\qquad\qquad
(-1)^{\sss q(i-j) + (p-1)j + i + k +1} \,  t^{j-1} \big( e \bullet_k (\mu \bullet_0 tx) \big)
\circ_{p-i+j} y
 \\
 &
 \overset{\sss 
 }
        {=} &
 \Sum^{p}_{i=2} \Sum^{i-1}_{j=1} \, \Sum^{p-j+1}_{k=1} 
     (-1)^{\sss q(i-j) + (p-1)j + i + k +1} \,  \big( e \bullet_{k+j-1} t^{j-1} (\mu \bullet_0 tx) \big)  \circ_{p-i+j} y
\\
  &&
  \quad 
  +
  \Sum^{p}_{i=3} \Sum^{i-1}_{j=2} \, \Sum^{j-2}_{k=0} 
    (-1)^{\sss q(i-j) + p(j-1) + i + k} \,  \big( e \bullet_{k} t^{j-2} (\mu \bullet_0 tx) \big)  \circ_{p-i+j} y
\\
&
\overset{\sss 
    }
  {=} &
 \Sum^{p-1}_{i=1} \Sum^{i}_{j=1}  \, \Sum^{p}_{k=j} 
     (-1)^{\sss q(i-j) + pj + i + k +1} \,  \big( e \bullet_{k} t^{j-1} (\mu \bullet_0 tx) \big)  \circ_{p-i+j-1} y
\\
 &&
  \quad 
  +
  \Sum^{p}_{i=3} \Sum^{i-1}_{j=2} \, \Sum^{j-2}_{k=0} 
    (-1)^{\sss q(i-j) + p(j-1) + i + k} \,  \big( e \bullet_{k} t^{j-2} (\mu \bullet_0 tx) \big)  \circ_{p-i+j} y
\\
           & \overset{\sss 
    }
             {=:} &
             (8^\textrm{v}) + (8^\textrm{vi}).
   \end{eqnarray*} 
One then has, by splitting up the summand for which $j = i-1$ from the second term above,
\begin{eqnarray*}
  (8^\textrm{vi}) 
    & \overset{\sss 
  }{=} &
  \Sum^{p}_{i=4} \Sum^{i-2}_{j=2} \, \Sum^{j-2}_{k=0} 
    (-1)^{\sss q(i-j) + pj + i + k} \,  \big( e \bullet_{k} t^{j-2} (\mu \bullet_0 tx) \big)  \circ_{p-i+j} y
\\
&&
\quad
+  \Sum^{p}_{i=3} \, \Sum^{i-3}_{k=0} 
    (-1)^{\sss q + (p-1)i + k} \,  \big( e \bullet_{k} t^{i-3} (\mu \bullet_0 tx) \big)  \circ_{p-1} y,
\end{eqnarray*}
which, by re-indexing $i \mapsto i-1$, $j \mapsto j-1$ in the first summand resp.\ $i \mapsto i-1$ in the second, is seen to cancel out with
$(4^\textrm{v})$ resp.\ $(4^\textrm{vii})$.

At this point, we are left to deal with the terms $(2), (4^\textrm{viii}), (6^\textrm{i}), (7)$, and $(8^\textrm{v})$. For the last four one has:
\begin{eqnarray*}
 (8^\textrm{v}) + (6^\textrm{i}) 
     \!\!\!& \overset{\sss 
  }{=} \!\!\!&
  \Sum^{p-1}_{i=1} \Sum^{i}_{j=1} \, \Sum^{p-i+j-1}_{k=j} 
    (-1)^{\sss q(i-j) + pj + i + k +1} \,  \big( e \bullet_{k} t^{j-1} (\mu \bullet_0 tx) \big)  \circ_{p-i+j-1} y
\\
 \!\!\! & \overset{\sss 
  }{=} \!\!\! &
  \Sum^{p-2}_{i=1} \Sum^{i}_{j=1} \, \Sum^{p-i+j-2}_{k=j} 
    (-1)^{\sss q(i-j) + pj + i + k +1} \,  \big( e \bullet_{k} t^{j-1} (\mu \bullet_0 tx) \big)  \circ_{p-i+j-1} y
\\
  &&
\!\!  +
  \Sum^{p-1}_{i=1} \Sum^{i}_{j=1}  
  (-1)^{\sss q(i-j+1) + (p-1)(j-1) +1} \,  \big( e \bullet_{p-i+j-1} t^{j-1} (\mu \bullet_0 tx) \big)  \circ_{p-i+j-1} y
  \\
\!\!\!    & \overset{\sss 
  }{=} \!\!\! &
  -(4^\textrm{viii}) - (7), 
\end{eqnarray*}
where the equality with respect to the first summand is obtained by re-indexing $i \mapsto i-1$ again. The only remaining term is now $(2)$, which, in a (up to signs) totally symmetric way  cancels by 
an analogous and equally long computation regarding the terms where $y$ precedes $x$ in the homotopy formula \eqref{nunjanaja2}, which we, understandably, omit; but which, exactly as in \eqref{hierdoch}, produces the `second half' of the Gerstenhaber bracket, {\em i.e.}, the brace $- (-1)^{\sss (p-1)(q-1)} y \{x\}$. This proves the relation \eqref{nunjanaja2} and therefore concludes the proof.
\end{proof}

As an immediate consequence one has:

\begin{cor}
  \label{barbarano}
Under the assumptions of Theorem \ref{main}, the cyclic dual of a cyclic opposite module
  $(\cM, \tau)$ induces a cochain complex $(\hat\cM^\bull, \gd)$ the cohomology of which is a BV algebra.
\end{cor}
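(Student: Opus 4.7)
The plan is to deduce the BV identity \eqref{bvbv} directly on cohomology by passing to cohomology in the homotopy formula \eqref{nunjanaja2} of Theorem \ref{main}, and then rearranging by means of the graded commutativity of the cup product on $H^\bull(\hat\cM)$.

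First I note that everything needed on the algebraic side is already in place: by Lemma \ref{Bop}, $\gd^2 = 0$, $B^2 = 0$, and $\gd B + B\gd = 0$, so $B$ is a well-defined operator of degree $-1$ on $H^\bull(\hat\cM, \gd)$; by Proposition \ref{gerstasgerstcan}, the triple $\big(H^\bull(\hat\cM), \cup, \{\,\cdot\,,\,\cdot\,\}\big)$ is a Gerstenhaber algebra. Thus the only remaining task is to verify Equation \eqref{bvbv}.

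For this, let $x,y \in \hat\cM$ be $\gd$-cocycles of degree $p$ and $q$, respectively, and read off the formula \eqref{nunjanaja2}. All terms of the form $(-1)^{\sss pq}\gd(S_xy)$ and $(-1)^{\sss q}\gd(S_yx)$ are coboundaries and hence vanish on cohomology; likewise, the four summands $S_{\gd x}y$, $S_x \gd y$, $S_{\gd y}x$, $S_y \gd x$ all vanish since $x$ and $y$ are cocycles. Therefore, on $H^\bull(\hat\cM)$,
\begin{equation*}
\{x,y\} \, = \, (-1)^{\sss (q-1)p} B(y \cup x) - (-1)^{\sss p} Bx \cup y - (-1)^{\sss p(q-1)} By \cup x.
\end{equation*}
Now I apply the graded commutativity of the cup product from \eqref{anachroniste}, which on cohomology reduces to $y \cup x = (-1)^{\sss pq} x \cup y$ and likewise, for $By$ of degree $q-1$, to $By \cup x = (-1)^{\sss (q-1)p} x \cup By$. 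Substituting these two identities and observing that $(-1)^{\sss (q-1)p + pq} = (-1)^{\sss p}$ and $(-1)^{\sss p(q-1) + (q-1)p} = 1$, the three surviving terms combine exactly into
\begin{equation*}
\{x,y\} \, = \, (-1)^{\sss p} B(x \cup y) - (-1)^{\sss p} Bx \cup y - x \cup By,
\end{equation*}
which is the defining identity \eqref{bvbv} of a BV algebra.

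The main obstacle has already been overcome in Theorem \ref{main}: establishing the explicit cochain-level homotopy expressing the Gerstenhaber bracket in terms of $B$, $\cup$, and the higher $S$-operator. Once this formula is in hand, the passage to cohomology is a mechanical sign-bookkeeping exercise, and the only subtlety worth flagging is that one is, of course, working on the normalised complex as specified in Remark \ref{normalised}, where $B$ and the formula \eqref{nunjanaja2} take the form used above.
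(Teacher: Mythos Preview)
Your proof is correct and follows essentially the same approach as the paper: pass to cohomology in the homotopy formula \eqref{nunjanaja2} so that all terms involving $\gd$ vanish, then use the graded commutativity of $\cup$ (from \eqref{anachroniste}) to rewrite the three surviving terms as the BV relation \eqref{bvbv}. Your additional remarks about $B$ descending via Lemma \ref{Bop} and the Gerstenhaber structure from Proposition \ref{gerstasgerstcan} make the argument slightly more explicit but do not depart from the paper's reasoning.
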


\begin{proof}
  This is, in a standard way, a direct consequence of the homotopy formula \eqref{nunjanaja2} just proven combined with the homotopy relation \eqref{anachroniste}: on cohomology, all terms that involve the differential $\gd$ disappear, and Eq.~\eqref{nunjanaja2} reduces to
  \begin{equation}
    \label{wirklichsimpel1}
    \{x,y\}
    =
(-1)^{\sss (q-1)p} B(y \cup x)
    - (-1)^{\sss p} Bx \cup y
        - (-1)^{\sss p(q-1)} By \cup x,  
  \end{equation}
  whereas from \eqref{anachroniste} one derives $x \cup y = (-1)^{\sss pq} y \cup x$ and $B y \cup x =(-1)^{\sss p(q-1)} x \cup B y$
  on cohomology.
    With this, Eq.~\eqref{wirklichsimpel1} turns into
    \begin{equation*}
  \{x,y\}
    =
(-1)^{\sss p} B(x \cup y)
    - (-1)^{\sss p} Bx \cup y
        - x \cup By,  
      \end{equation*}
    which is the customary BV relation as quoted in Eq.~\eqref{bvbv}.
\end{proof}

Generalising this result in the spirit of Corollary \ref{alsomain}, we conclude:

\begin{cor}
  \label{alsoalsomain}
  Let $\cP$ be a cosimplicial and cocyclic-compatible nonsymmetric operad in the sense of Definition \ref{compitompi}. Then the cohomology groups $H(\cP)$, obtained from the cosimplicial structure, form a BV algebra.
  \end{cor}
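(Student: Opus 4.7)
The plan is to mimic, {\em verbatim}, the chain of results Lemma \ref{kettensaege} -- Lemma \ref{leibnizuptohomotopy} -- Proposition \ref{gerstasgerstcan} -- Theorem \ref{main} -- Corollary \ref{barbarano}, but with the roles of \eqref{opere1}--\eqref{unschoen2} played by the general cosimplicial and cocyclic compatibility relations \eqref{besser1}--\eqref{unschoen} in Definition \ref{compitompi}. By Corollary \ref{alsomain} the cohomology $H(\cP)$ is already a Gerstenhaber algebra with respect to the cup product
$$
x \cup y := (\gd_0 y) \circ_1 x
$$
and the bracket induced by the braces \eqref{bracytracy}; therefore the only issue is to show that, on cohomology, this bracket coincides with the BV bracket generated by the cup product and by a differential $B$ of degree $-1$.

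First, I would define, in analogy to \eqref{humanoid1} and \eqref{humanoid2},
$$
Bx := \Sum_{i=0}^{p-1}(-1)^{\sss (p-1)(i-1)}\tau^{-i-1}(\gs_{p-1}\tau x),
$$
$$
S_x y := \Sum_{i=1}^{p-1}\Sum_{j=1}^{i}(-1)^{\sss (q-1)i+(p+q)j+pq}\, \tau^{-j}(\gs_{p-1}\tau x)\circ_{p-i+j-1} y,
$$
for $x \in \cP(p)$, $y \in \cP(q)$ on the normalised complex. That $B$ is a well-defined differential of degree $-1$ on the normalised cochain complex, anticommuting with $\gd$, is the standard fact that a cocyclic $k$-module carries a Connes-Rinehart-Tsygan boundary: it does not require any operadic input, cf.\ the computation following Lemma \ref{Bop}.

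Next, I would transcribe the statement and proof of Theorem \ref{main} in the cocyclic language, that is, prove the homotopy formula
\begin{equation*}
\begin{split}
\{x,y\} &\,=\, (-1)^{\sss (q-1)p}B(y\cup x) - (-1)^{\sss p}Bx\cup y - (-1)^{\sss p(q-1)}By\cup x\\
&\quad + (-1)^{\sss pq}\gd(S_xy) + (-1)^{\sss p}S_{\gd x}y - (-1)^{\sss pq}S_x\gd y\\
&\quad - (-1)^{\sss q}\gd(S_yx) + (-1)^{\sss p(q-1)}S_{\gd y}x - (-1)^{\sss p+q}S_y\gd x.
\end{split}
\end{equation*}
The proof follows Theorem \ref{main} line by line: every invocation of an identity from \eqref{opere1}--\eqref{opere6} is replaced by the corresponding identity from \eqref{besser1}--\eqref{besser3}, every invocation of \eqref{lagrandebellezza1}--\eqref{lagrandebellezza2} or \eqref{unschoen2} is replaced by \eqref{unschoen} together with $\tau^{n+1}=\id$, and every occurrence of $e\bullet_i(-)$ or $\mu\bullet_j(-)$ is replaced by $\gd_i(-)$ or $\gs_{j-1}(-)$, respectively. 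The key combinatorial identities (the associativity-like decompositions of $\gd_i(x\circ_j y)$ and $(\gd_i x)\circ_j y$, the behaviour of $\gs_i$ on compositions, and the cyclic intertwining rule) are precisely what was used in the case of $\hat\cM$; hence the same tedious but mechanical double-sum bookkeeping closes. Graded commutativity of $\cup$ up to $\gd$-exact terms is obtained in the same way as in Lemma \ref{kettensaege}, relying only on \eqref{besser1}--\eqref{besser3}.

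Finally, passing to $H(\cP)$, all terms in the homotopy formula involving $\gd$ vanish, leaving
$$
\{x,y\} = (-1)^{\sss (q-1)p}B(y\cup x) - (-1)^{\sss p}Bx\cup y - (-1)^{\sss p(q-1)}By\cup x,
$$
which, combined with the graded commutativity $x\cup y=(-1)^{\sss pq}y\cup x$ valid on cohomology, reduces to the standard BV identity \eqref{bvbv}, exactly as in the proof of Corollary \ref{barbarano}. The main obstacle is not conceptual but notational: one must patiently verify that each of the roughly two dozen re-indexed partial sums appearing in the proof of Theorem \ref{main} remains well-defined and matched under the translation, in particular that the borderline cases $i=0$, $i=p+1$ (where \eqref{besser2} and $\tau^{p+1}=\id$ come into play) are handled correctly. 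Since these are governed precisely by the axioms in Definition \ref{compitompi}, no new identities are needed, and the argument goes through.
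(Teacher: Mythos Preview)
Your proposal is correct and follows essentially the same approach as the paper: transcribe the proof of Theorem \ref{main} by replacing the explicit operations $e\bullet_i(-)$, $\mu\bullet_j(-)$, $t$ by the abstract cosimplicial and cocyclic structure maps $\gd_i$, $\gs_j$, $\tau$, and observe that every identity invoked there has a counterpart available under Definition \ref{compitompi}. Two small points of precision: first, the translation is $\mu\bullet_j \leftrightarrow \gs_j$ (not $\gs_{j-1}$), cf.\ \eqref{journuit}; second, the uses of \eqref{lagrandebellezza1} for $\gvf=e$ or $\gvf=\mu$ (and of \eqref{SchlesischeStr} for these elements) in the proof of Theorem \ref{main} do not translate into the cyclic operad axiom \eqref{unschoen} but into the standard cocyclic $k$-module identities $\tau\gd_i=\gd_{i-1}\tau$, $\tau\gs_j=\gs_{j-1}\tau$, $\tau\gd_0=\gd_{n+1}$, $\tau\gs_0=\gs_{n-1}\tau^2$ and the cosimplicial relations among $\gd_\bull$, $\gs_\bull$, which are part of the hypothesis that $(\cP^\bull,\gd_\bull,\gs_\bull,\tau)$ is a cocyclic object. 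The paper's proof makes this dictionary explicit; otherwise your argument and the paper's coincide.
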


\begin{proof}
  Analogously to what was said right above Corollary \ref{alsomain}, the proof of Theorem \ref{main} does not depend on the precise form of the cofaces and codegeneracies, but on three things only:
  first, again on the fact that the associativity relations \eqref{SchlesischeStr} 
  are equivalent to the compatibility relations between cofaces and codegeneracies in the cosimplicial $k$-module given by Eqs.~\eqref{journuit}; second,  on the relations \eqref{opere1}--\eqref{opere6}, which are simply the relations \eqref{besser1}--\eqref{besser3} for this specific situation; plus, third, the cyclic compatibility \eqref{lagrandebellezza1} for a cyclic opposite module over an operad with multiplication $(\cO, \mu, e)$ with respect to the two special elements $\mu$ and $e$ only. Let us show that these amount to the identities required for a cocyclic module to hold (see, for example, \cite[\S6]{Lod:CH}) when considering the cofaces and codegeneracies as in \eqref{journuit}, along with the cocyclic operator $\tau = t^{-1}$. Rewriting, by means of $t^{n+1} = \id$ in degree $n$, the relation \eqref{lagrandebellezza1} for $\tau$ and $\phi = \mu$ or $\phi = e$, one obtains
 \begin{equation*}
  \begin{array}{rcll}
\tau(\mu \bullet_i x) &=& \mu \bullet_{i-1} \tau x & \mbox{for} \ 1 \leq i \leq n-1, 
\\
\tau(e \bullet_i x) &=& e \bullet_{i-1} \tau x & \mbox{for} \ 1 \leq i \leq n+1, 
\end{array}
 \end{equation*}
 and with \eqref{journuit} it follows that
 $\tau\gs_i = \gs_{i-1} \tau$ for  $1 \leq i \leq n-1$ 
 as well as
 $\tau\gd_i = \gd_{i-1} \tau$ for  $1 \leq i \leq n+1$, which are the compatibility conditions in a cocyclic module for $i \neq 0$. We are left with checking
 $$
 \tau \gs_0 (x) = t^{-1} (\mu \bullet_0 x) = t^{n-1} (\mu \bullet_0 x)
 = \mu \bullet_{n-1} t^{n-1} x = \mu \bullet_{n-1} t^{-2} x
 = \gs_{n-1} \tau^2 (x) 
 $$
for $x \in \cM(n)$,  and likewise one computes
 $$
\tau \gd_0(x) = t^{-1}(e \bullet_0 x) = t^{n+1}(e \bullet_0 x) = e \bullet_{n+1} x
= \gd_{n+1} (x).
$$
Hence, what we in addition used in the proof of Theorem \ref{main} were the respective compatibility relations for a cocyclic $k$-module in disguise, so to speak.
  This makes it clear that one can rewrite the entire proof of Theorem \ref{main} in a more general setting for {\em any}
  operad $\cP$ in $\kmod$ that is also a cocyclic $k$-module respecting the cosimplicial and cocyclic compatibilities of Definition \ref{compitompi}.
\end{proof}
  
\section{Examples}
\label{examples}
Concrete examples where one can observe the statements proven above that motivated to develop this formal approach were given in \cite{Kow:ANCCOTCDOE} in the context of Hopf algebroids. We shall give a short summary in the more restricted context of Hopf algebras since properly introducing the many technical details involved in Hopf algebroid theory would go beyond the scope of the general level of this note. However, observe that only the latter allow to include the case of Hochschild cohomology for an associative algebra $A$ when looking for Gerstenhaber or BV algebra structures since the respective cohomology groups $\Ext_\Ae(A,A)$ can only be seen as a cohomology theory over a certain Hopf algebroid, but not over a Hopf algebra.

Standing assumption in this section, let $(H, k, \mu, \eta, \gD, \gve, S)$ be a Hopf algebra over $k$ (with characteristic zero if need be), where
$\mu$ and $\gD$ denote its product resp.\ coproduct,
$\eta$ and $\gve$ its unit resp.\ counit, and finally $S$ an involutive antipode, that is, $S^2 = \id$. The customary Sweedler notation for the coproduct will be used throughout and
elements in tensor powers will be (most of the time) written as $n$-tuples separated by commata instead of denoting them by notationally clumsier tensor chains. Ultimately, recall that the category $\hmod$ of left $H$-modules is monoidal a fact that is  reflected by the diagonal left $H$-action, denoted
\begin{equation}
  \label{mancino}
\begin{array}{rcl}
  h \mancino (m \otimes m') &:=&
  h_{(1)} m \otimes h_{(2)} m', \qquad m \in M, \, m' \in M', \, h \in H,
\end{array}
\end{equation}
for two objects $M, M' \in \hmod$ with action written by juxtaposition.

\subsection{$\bfCotor$ acting on $\bfCoext$ with $\bfExt$ as a resulting BV algebra}
Let us briefly apply the results in \cite[\S6]{Kow:ANCCOTCDOE} to see how the chain complex computing $\Coext$-groups can be seen as a cyclic opposite module over the cochain complex computing $\Cotor$-groups considered as an operad with multiplication, and then understand how the cyclic dual turns into a BV algebra in the spirit of the preceding sections. For details on the mentioned cohomology groups as derived functors of the {\em cohomomorphism functor} with respect to {\em contramodules} resp.\ the {\em cotensor functor} with respect to comodules, see {\em op.~cit.}, \S3.3 resp.\ \S3.2, in the context needed here, or \cite{Pos:HAOSAS} for the general theory.
Set
\begin{equation}
  \label{jupp}
\cO(p) := H^{\otimes p}, \qquad \cM(n) := \Hom(H^{\otimes n}, k),
\end{equation}
where in degree zero we put $\cO(0) := \cM(0) := k$.
The operadic structure on $\cO$ has been described, possibly first, in \cite[p.~65]{GerSch:ABQGAAD}; we do not need the explicit (partial) compositions here but only the fact that it is an operad with multiplication with the three distinguished elements $(\mu, \mathbb{1}, e) = \big( 1 \otimes 1, 1_H, 1_k)$. Specialising Lemma 6.1 \& Theorem 6.2 in \cite{Kow:ANCCOTCDOE} to the Hopf algebra case, we extract from there:
\begin{prop}
For all $1 \leq i \leq n-p+1$ and $0 \leq p \leq n$,
the operations
\begin{equation*}
    \bullet_i \colon \cO(p) \otimes \cM(n) \to \cM(n-p+1), 
\end{equation*}
given by, for any $(h^1, \ldots, h^p) \in \cO(p)$ and $\phi \in \cM(n)$,
\begin{equation*}
  \begin{split}
  & \big((h^1, \ldots, h^p) \bullet_i \phi\big)(g^1, \ldots, g^{n-p+1})
    := \phi\big(g^1, \ldots, g^i \mancino (h^1, \ldots, h^p), \ldots, g^{n-p+1}\big),
  \end{split}
\end{equation*}
where $g^j \in H$,
and declared to be zero if $p > n$, along with
\begin{equation*}
\big(1_k \bullet_i \phi\big)(g^1, \ldots, g^{n+1}) := \phi\big(g^1 , \ldots,  \gve(g^i), \ldots , g^{n+1}\big)
\end{equation*}
for elements in $\cO(0) \!=\! k$ and $1 \! \leq \! i \! \leq \! n+1$, induce on $\cM$ the structure of a uni\-tal opposite $\cO$-module.  
Moreover, defining an extra operation
\begin{equation*}
  \begin{split}
    \big((h^1, \ldots, h^p) \bullet_0 \phi\big)(g^1, \ldots, g^{n-p+1}) 
    &=
\phi\big( Sh^1 \mancino (h^2, \ldots, h^p, g^1,  \ldots, g^{n-p+1}) \big),
  \end{split}
  \end{equation*}
declared to be zero if $p > n + 1$, along with the cyclic operator 
\begin{equation}
  \label{cyconcoext}
  \begin{split}
    (t \phi)(g^1, \ldots, g^n) &=
\phi\big(Sg^1 \mancino (g^2, \ldots, g^n, 1)\big), 
  \end{split}
  \end{equation}
turns the opposite $\cO$-module $\cM$ into a cyclic one in the sense of Definition \ref{molck}.
\end{prop}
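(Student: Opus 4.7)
The strategy is to verify each axiom in Definition~\ref{molck} for the prescribed operations by reducing it to the basic Hopf algebra axioms: associativity of $\mu$, coassociativity and bialgebra compatibility of $\gD$, the unit/counit relations, the antipode equation $S(h_{(1)})h_{(2)} = \eta\gve(h) = h_{(1)}S(h_{(2)})$ together with its anti-(co)multiplicativity, and, for the cyclic condition, the involutive hypothesis $S^2 = \id$. Throughout, the diagonal action~\eqref{mancino} is the bridge between the operadic composition on $\cO = H^{\otimes\bull}$ and the slotwise insertion into $\phi \in \cM(n)$.

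First I would treat the three branches of the associativity relations~\eqref{SchlesischeStr} with $i, j \geq 1$. The case $j < i$ simply records that two disjoint diagonal insertions into distinct arguments of $\phi$ commute after the appropriate index shift. The middle case $j - p < i \leq j$, namely $\gvf \bullet_i (\psi \bullet_j \phi) = (\gvf \circ_{j-i+1} \psi) \bullet_i \phi$, is where $\gvf$ hits an argument that has itself already been exploded by $\psi$; rewriting both sides as a single diagonal action on $\phi$, equality follows from the multiplicativity of $\gD$ (so that $(g_{(k)})_{(1)} \otimes \cdots \otimes (g_{(k)})_{(q)}$ and $g_{(k,1)} \otimes \cdots \otimes g_{(k,q)}$ agree) together with the associativity of left multiplication on $H$. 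The case $i \leq j - p$ is the symmetric counterpart of the first. Unitality~\eqref{SchlesischeStr-1} and the analogous statements with the special element $1_k \in \cO(0)$ (which suppresses an argument via $\gve$) follow from $\gD(1) = 1 \otimes 1$ and $(\id \otimes \gve)\gD = \id = (\gve \otimes \id)\gD$.

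Next I would handle the extra composition $\bullet_0$, that is, the instances of~\eqref{SchlesischeStr}--\eqref{SchlesischeStr-1} with $i = 0$ or $j = 0$, where the antipode enters essentially. The defining formula puts $Sh^1$ in front of the remaining arguments and acts diagonally, so each branch of~\eqref{SchlesischeStr} translates into an identity proved by combining anti-comultiplicativity $\gD(Sh) = (S \otimes S)\gD^\op(h)$, anti-multiplicativity $S(gh) = S(h)S(g)$, coassociativity, and the antipode axiom; the verification is done branch by branch, writing both sides as explicit diagonal actions on $\phi$ and matching Sweedler components. The shift relation~\eqref{lagrandebellezza1}, $t(\gvf \bullet_i \phi) = \gvf \bullet_{i+1} t(\phi)$ for $0 \leq i \leq n-p$, is then an unraveling: $t$ cycles the arguments and, to compensate, displaces the $\gvf$-slot from $i$ to $i+1$, while the extra $Sg^1$-factor emerging from $t$ passes through the freshly inserted diagonal action by coassociativity; the border case $i = 0$ additionally invokes $S^2 = \id$ and antipode anti-multiplicativity.

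The main obstacle, and the final step, is the cyclic identity~\eqref{lagrandebellezza2}, namely $t^{n+1} = \id$ on $\cM(n)$, which is where the involutivity $S^2 = \id$ becomes indispensable. My plan is to prove by induction on $k$ a closed form $(t^k\phi)(g^1, \ldots, g^n) = \phi\bigl( \Omega_k(g^1, \ldots, g^n) \bigr)$ in which $\Omega_k$ is an explicit diagonal word built from Sweedler components of $Sg^1, \ldots, Sg^k$ acting on the cyclically shifted string $(g^{k+1}, \ldots, g^n, 1, \ldots, 1)$. The inductive step uses anti-comultiplicativity of $S$ and $S^2 = \id$ to absorb the newly prepended $Sg^{k+1}$-factor into the existing string. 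At $k = n+1$, successive applications of the antipode axiom $S(h_{(1)})h_{(2)} = \eta\gve(h)$ collapse the iterated $S$-strings, and the two outermost antipodes pair off as $S^2 = \id$, yielding $(t^{n+1}\phi)(g^1, \ldots, g^n) = \phi(g^1, \ldots, g^n)$ as required. Once this identity is established, the cyclic opposite module structure on $(\cM, t)$ is complete.
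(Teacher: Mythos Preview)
The paper does not actually prove this proposition in-line: it is stated as a direct specialisation of Lemma~6.1 and Theorem~6.2 in \cite{Kow:ANCCOTCDOE} to the Hopf algebra case with trivial coefficients, so there is no detailed argument in the present paper to compare against. Your proposal to verify Definition~\ref{molck} axiom by axiom using the standard Hopf identities is exactly what a self-contained proof has to do, and your identification of which identities drive which branch (coassociativity and bialgebra compatibility for the ``inner'' case of~\eqref{SchlesischeStr}, anti-(co)multiplicativity of $S$ for the $\bullet_0$ instances, and $S^2 = \id$ for~\eqref{lagrandebellezza2}) is correct.

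Two places deserve tightening. First, the step for $t^{n+1} = \id$ is only schematic: you should record the closed form explicitly, for instance
\[
(t^k\phi)(g^1,\ldots,g^n) \;=\; \phi\Big(\big(S(g^1_{(k)}\cdots g^{k}_{(1)})\big) \mancino \big(g^{k+1},\ldots,g^n,1,\ldots,1\big)\Big)
\]
(with appropriate Sweedler bookkeeping), and then show concretely how at $k = n+1$ the chain of factors $S(g^1_{(\cdot)}\cdots g^{n+1}_{(\cdot)})$ acting on the all-$1$ string collapses via the antipode axiom and $S^2 = \id$ back to $(g^1,\ldots,g^n)$. Without writing this out, the ``collapse'' remains an assertion. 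Second, the shift relation~\eqref{lagrandebellezza1} at $i=0$ is the most delicate single identity (two antipodes interacting through the diagonal action), and your sketch ``passes through by coassociativity'' underplays the actual Sweedler manipulation; it would strengthen the write-up to exhibit this one case in full.
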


In particular, $\cM$ becomes a $k$-simplicial module, and a short computation reveals the faces and degeneracies. Putting as a shorthand $x := (g^1, \ldots, g^{n-1})$ and $y := (g^1, \ldots, g^{n+1})$ for $g^j \in H$, one obtains: 
\begin{equation}
  \label{simponcoext}
\begin{array}{rcccll}
(d_0\phi)(x) &=&      (\mu \bullet_0 \phi)(x)
  &=& \phi(1, g^1 , \ldots , g^{n-1}), &
  \\[1mm]
(d_i\phi)(x) &=&
(\mu \bullet_i \phi)(x)
&=& \phi(g^1 , \ldots , \gD g^i , \ldots , g^{n-1}), &
    \\[1mm]
(d_{n}\phi)(x) &=&    (\mu \bullet_0 t \phi)(x)
    &=& \phi(g^1 , \ldots , g^{n-1} , 1), &
\\[1mm]
(s_j\phi)(y) &=&
(e \bullet_{j+1} \phi)(y)
&=&
\phi\big(g^1 ,  \ldots ,  g^j \gve(g^{j+1}) ,  \ldots ,  g^{n+1}\big), &
\end{array}
\end{equation}
for $1 \leq i \leq n-1$ and $0 \leq j \leq n$, where $\phi \in \cM(n)$. This allows to compute the extra degeneracy by means of
\begin{equation}
  \label{kaltkaltkalt}
(s_{-1}\phi)(y) = (e \bullet_{0} \phi)(y)
  =
  t \big(e \bullet_{n+1} \phi\big)(y)
  =
\phi\big(Sg^1 \mancino (g^2, \ldots, g^{n+1})\big).
  \end{equation}
According to Definition \ref{moulesfrites}, one can then form the cyclic dual $\hat\cM$ as a cochain complex induced by the following cofaces and codegeneracies: 
\begin{equation}
  \label{cosimponcoext}
\begin{array}{rcll}
  (\gd_0\phi)(y) &=& \phi\big(Sg^1 \mancino (g^2, \ldots, g^{n+1})\big), &
  \\[1mm]
  (\gd_i\phi)(y) &=&
  \phi\big(g^1 ,  \ldots ,  g^{i-1} \gve(g^{i}) ,  \ldots ,  g^{n+1}\big),
  & \qquad 1 \leq i \leq n +1,
   \\[1mm]
(\gs_0\phi)(x) &=& \phi(1, g^1 , \ldots , g^{n-1}), &
\\[1mm]
(\gs_j\phi)(x) &=& \phi(g^1 , \ldots , \gD g^j , \ldots , g^{n-1}),
& \qquad 1 \leq j \leq n-1, 
\end{array}
\end{equation}
which, in addition, yields a cocyclic $k$-module when adding the inverse of \eqref{cyconcoext}, 
\begin{equation}
\label{inverset}
(t^{-1} \phi)(g^1, \ldots, g^n) = \phi \big(Sg^n \mancino (1, g^1, \ldots, g^{n-1}) \big) 
\end{equation}
in degree $n$. As a consequence, one can prove:

\begin{lem}
\label{ilfautdesjours}
  The cyclic dual $\hat\cM$ 
  of the sequence $\cM = \{ \cM(n)\}_{n \geq 0}$ of $k$-modules, where $\cM(n) = \Hom(H^{\otimes n}, k)$, with respect to the cyclic $k$-module defined by the relations \eqref{cyconcoext} and \eqref{simponcoext}, yields a cochain complex computing $\Ext^\bull_H(k,k)$. Defining on $\hat\cM$ the partial operadic composition
  \begin{small}
  \begin{equation}
\begin{split}
    \label{verticalcoext}
    & (\phi \circ_j \psi)(z)
    \\
    &
    :=
    \begin{cases}
      \phi\pig(\psi(g^1, \ldots, g^{q-1}, g^q_{(1)}) g^q_{(2)}, g^{q+1}, \ldots, g^{p+q-1}\pig)
      & \mbox{if} \ j = 1,
\\
\begin{array}{r}
  \hspace*{-.2cm}
  \phi\pig(g^{1}, \ldots, g^{j-2},
  g^{j-1}_{(1)}
  \psi\big(Sg^{j-1}_{(2)} \mancino (g^{j}, \ldots, g^{j+q-2}, g^{j+q-1}_{(1)})\big), 
\\
  g^{j+q-1}_{(2)},
  g^{j+q}, \ldots, g^{p+q-1}\pig) 
\end{array}
& \mbox{if} \  2 \leq j \leq p,
    \end{cases}
    \end{split}
  \end{equation}
  \end{small}for $\phi \in \hat\cM(p), \psi \in \hat\cM(q)$, and
  $z := (g^1, \ldots, g^{p+q-1})$,
  one obtains on $\hat\cM$ the structure of a cosimplicial and cocyclic-compatible operad in $\kmod$.
  \end{lem}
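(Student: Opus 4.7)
The plan is to separate the lemma into its two distinct claims: the identification of the cohomology with $\Ext^\bullet_H(k,k)$, and the verification that $(\hat\cM, \circ_j)$ is a cosimplicial and cocyclic-compatible operad. Since the underlying cosimplicial/cocyclic $k$-module on $\hat\cM$ has already been written down in \eqref{cosimponcoext} and \eqref{inverset}, what remains is to interpret the cohomology and to check the interaction of $\circ_j$ with these structure maps.

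For the cohomological identification, I would use the standard Hom-tensor adjunction
$$
\hat\cM^n = \Hom(H^{\otimes n}, k) \cong \Hom_H(H \otimes H^{\otimes n}, k),
$$
where on the right $H \otimes H^{\otimes n}$ is endowed with the left regular action on the outer $H$-factor; the isomorphism sends $\phi$ to the $H$-linear map $h \otimes (g^1, \ldots, g^n) \mapsto \gve(h)\phi(g^1, \ldots, g^n)$, with inverse obtained by evaluating at $1 \otimes -$. Comparing term by term with \eqref{cosimponcoext}, one sees that $\gd_i$ for $1 \leq i \leq n+1$ comes from the usual comultiplication/counit insertions on the inner arguments, while $\gd_0$, which involves $S$, is precisely the image under the adjunction of the bar differential $h \otimes (g^1, \ldots) \mapsto hg^1 \otimes (g^2, \ldots)$, after using $S^2=\id$ and the defining identity of the diagonal action \eqref{mancino}. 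Since $H \otimes H^{\otimes \bullet} \to k$ is the (normalised) bar resolution of the trivial left $H$-module $k$, and hence $H$-projective, its image under $\Hom_H(-, k)$ computes $\Ext^\bullet_H(k,k)$.

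For the structural claim, the verification splits in three stages. \emph{Stage one}, the operad axioms \eqref{danton}: one substitutes \eqref{verticalcoext} into each of the three cases $j<i$, $i \leq j < q+i$, $j \geq q+i$, and reduces to Sweedler-index bookkeeping using coassociativity, counitality, the module axiom for $\mancino$, and the antipode identity $S(h_{(1)})h_{(2)} = \gve(h)1$. The asymmetric treatment of $j=1$ in \eqref{verticalcoext} is precisely what is required for the axioms to close. \emph{Stage two}, the cosimplicial compatibilities \eqref{besser1}--\eqref{besser3}: for inner slots $1 \leq i \leq n$ the cofaces $\gd_i$ and codegeneracies $\gs_j$ just insert $\gve$ or $\gD$ at a single argument, and these commute with $\circ_j$ whenever they do not touch the bracketed position, reducing to coassociativity. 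The boundary situations, namely $i=0$ or $i = p+q$ in \eqref{besser1}--\eqref{besser2} and the codegeneracy $\gs_0$ in \eqref{besser3}, require the antipode-comultiplication compatibility $\gD \circ S = (S \otimes S) \circ \gD^{\op}$ together with $S^2 = \id$ to reshuffle the $S$-factor that originates from the $j=1$ case of \eqref{verticalcoext} or from the formula for $\gd_0$.

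\emph{Stage three}, the cocyclic compatibility \eqref{unschoen}, is where I expect the main obstacle to lie. One must verify
$$
\tau(\phi \circ_j \psi) \; = \;
\begin{cases} \tau\psi \circ_q \tau\phi, & j=1, \\ \tau\phi \circ_{j-1} \psi, & 2 \leq j \leq p, \end{cases}
$$
with $\tau = t^{-1}$ given by \eqref{inverset}. The range $2 \leq j \leq p$ is direct: the cyclic rotation by $\tau$ moves the argument on which the antipode acts by one position to the left, which exactly matches the shift $j \mapsto j-1$; the Sweedler computation closes after a single use of $\gD \circ S = (S \otimes S)\circ \gD^{\op}$. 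The case $j=1$ is genuinely delicate, since here the two inputs $\phi$ and $\psi$ must swap roles: rewriting $\tau(\phi \circ_1 \psi)$ via \eqref{inverset} produces a term in which $S$ is applied to the last of the combined arguments, and one must push this antipode through the internal insertion of $\psi$ and rearrange via the involutivity $S^2 = \id$ and the antipode axiom $S(h_{(1)})h_{(2)} = \gve(h)1$ to recognise the right-hand side $\tau\psi \circ_q \tau\phi$. Once this identity is established, $\hat\cM$ qualifies as a cosimplicial and cocyclic-compatible operad in the sense of Definition \ref{compitompi}, so that Corollary \ref{alsoalsomain} applies and delivers, as a consequence, the BV structure on $\Ext^\bullet_H(k,k)$.
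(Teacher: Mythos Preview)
Your proposal is correct and follows essentially the same three-stage decomposition as the paper: operad axioms, cosimplicial compatibility, cocyclic compatibility, all reduced to Sweedler-index bookkeeping with the antipode identities and $S^2=\id$. The one genuine difference is the cohomological identification: the paper cites a higher-order Hopf--Galois (translation) map from \cite{Kow:ANCCOTCDOE}, whereas you give the direct bar-resolution argument via the adjunction $\Hom(H^{\otimes n},k)\cong\Hom_H(H\otimes H^{\otimes n},k)$. Your route is more elementary and self-contained for the Hopf algebra case with trivial coefficients; the paper's route is what generalises to Hopf algebroids and nontrivial coefficients. A minor point of emphasis: you flag the $j=1$ case of \eqref{unschoen} as the delicate one, while the paper writes out $2\le j\le p$ explicitly and dismisses $j=1$ as ``likewise''; your assessment is arguably the more honest one, since the swap $\tau(\phi\circ_1\psi)=\tau\psi\circ_q\tau\phi$ does require an extra antipode manipulation.
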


\begin{rem}
If $\psi$ is a $0$-cochain, with $\Hom(H^{\otimes \hskip 1pt 0}, k) \simeq k$ one puts $\psi = 1_k$, and Eq.~\eqref{verticalcoext} has to be read as
$
  (\phi \circ_1 1_k)(g^1, \ldots, g^{p-1}) :=
  \phi(1, g^1, \ldots, g^{p-1})
  $
  along with
  $
  (\phi \circ_j 1_k)(g^1, \ldots, g^{p-1}) :=
  \phi(g^1, \ldots, \gD g^{j-1}, \ldots, g^{p-1})
  $
  for all $2 \leq j \leq p$.
  \end{rem}

\begin{proof}[Proof of Lemma \ref{ilfautdesjours}]
That the cochain complex induced by Eqs.~\eqref{cosimponcoext} computes $\Ext^\bull_H(k,k)$ directly follows by specialising \cite[\S5]{Kow:ANCCOTCDOE} to Hopf algebras and trivial coefficients combining them with a cochain isomorphism obtained from a higher order Hopf-Galois (resp.\ translation) map, the details of which being omitted here. 

That the partial compositions \eqref{verticalcoext} yield an operadic structure in the sense of \S\ref{pamukkale1}, or, more concretely, fulfil the relations \eqref{danton} is a somewhat longish but standard check using the properties of an antipode along with $S = S^{-1}$ in this case, similar to the computations that follow below. 

Next, we prove that the vertical composition \eqref{verticalcoext} is cosimplicial-compatible with respect to the cosimplicial operators in Eqs.~\eqref{cosimponcoext} in the sense of Definition \ref{compitompi}. This is equally straightforward; we will therefore only check a few of them in order to illustrate the computations involved. For example, for $i = 0$ and $2 \leq j \leq p$, one has
\begin{equation*}
  \begin{split}
& \big(\gd_0 (\phi \circ_j \psi)\big)(g^1, \ldots, g^{p+q}) 
    \\
    = \  &
    (\phi \circ_j \psi)\big(Sg^1 \mancino (g^2, \ldots, g^{p+q})\big)
    \\
    = \  &
    (\phi \circ_j \psi)\pig(Sg^1_{(3)} \mancino (g^2, \ldots, g^{j}), 
    Sg^1_{(2)} \mancino (g^{j+1}, \ldots, g^{j+q}),
Sg^1_{(1)} \mancino (g^{j+q+1}, \ldots, g^{p+q})
\pig)
    \\
    = \  &
    \phi\pig(Sg^1_{(4)} \mancino (g^2, \ldots, g^{j-1}, g^{j}_{(1)})
    \psi\big(
    S( Sg^1_{(3)}Sg^{j}_{(2)}) \mancino Sg^1_{(2)} \mancino (g^{j+1}, \ldots, g^{j+q-1},  g^{j+q}_{(1)})\big),
    \\
    &
    \qquad \qquad \qquad \qquad \qquad \qquad  \qquad \qquad \qquad \qquad
Sg^1_{(1)} \mancino (g^{j+q}_{(2)}, g^{j+q+1}, \ldots, g^{p+q})\pig)
\end{split}
 \end{equation*}
 \begin{equation*}
\begin{split}
    = \  &
    \phi\pig(Sg^1_{(2)} \mancino (g^2, \ldots, g^{j-1}, g^{j}_{(1)})
    \psi\big(
    Sg^{j}_{(2)} \mancino (g^{j+1}, \ldots, g^{j+q-1},  g^{j+q}_{(1)})\big),
    \\
    &
    \qquad \qquad \qquad \qquad \qquad \qquad  \qquad \qquad \qquad \qquad
Sg^1_{(1)} \mancino (g^{j+q}_{(2)}, g^{j+q+1}, \ldots, g^{p+q})\pig)
   \\
    = \  &
    \gd_0 \phi\pig(g^1, \ldots, g^{j-1}, g^{j}_{(1)}
    \psi\big(
    Sg^{j}_{(2)} \mancino (g^{j+1}, \ldots, g^{j+q-1},  g^{j+q}_{(1)})\big),
g^{j+q}_{(2)}, g^{j+q+1}, \ldots, g^{p+q}\pig)
  \\
    = \  &
    \big((\gd_0 \phi) \circ_{j+1} \psi\big)\big(g^1, \ldots, g^{p+q}\big),
       \end{split}
\end{equation*}
and similarly (but easier) for $j =1$, as well as for the cases $1 \leq i \leq j-1$. This proves the first line in \eqref{besser1}, the other two being left to the reader, along with all relations in \eqref{besser2}. Let us, however, still check  the middle line in \eqref{besser3}: to begin with, let $j - 1 < i < j + q -2$ and $2 \leq j \leq p$. We have:
\begin{small}
\begin{equation*}
  \begin{split}
& \big(\gs_i (\phi \circ_j \psi)\big)(g^1, \ldots, g^{p+q-2}) 
    \\
    = \  &
    (\phi \circ_j \psi)\big(g^1, \ldots, \gD g^i, \ldots, g^{p+q})\big)
    \\
    = \  &
  \phi\pig(g^{1}, \ldots, g^{j-2},
  g^{j-1}_{(1)}
  \psi\big(Sg^{j-1}_{(2)} \mancino (g^{j}, \ldots, \gD g^i, \ldots, g^{j+q-2}, g^{j+q-1}_{(1)})\big),
g^{j+q-1}_{(2)},
  g^{j+q}, \ldots, g^{p+q-2}\pig) 
   \\
    = \  &
  \phi\pig(g^{1}, \ldots, g^{j-2},
  g^{j-1}_{(1)}
  \big(\gs_{i-j+1} \psi\big)\big(Sg^{j-1}_{(2)} \mancino (g^{j}, \ldots, g^{j+q-2}, g^{j+q-1}_{(1)})\big),
g^{j+q-1}_{(2)},
  g^{j+q}, \ldots, g^{p+q-2}\pig) 
   \\
    = \  &
  \big(\phi \circ_j (\gs_{i-j+1} \psi)\big)\big(g^1, \ldots, g^{p+q-2}\big).
  \end{split}
\end{equation*}
\end{small}The case of $i = j+q-1$ follows easily. As for the case $i = j-1$,
\begin{small}
\begin{equation*}
  \begin{split}
& \big(\gs_{j-1} (\phi \circ_j \psi)\big)(g^1, \ldots, g^{p+q-2}) 
    \\
    = \  &
    (\phi \circ_j \psi)\big(g^1, \ldots, \gD g^{j-1}, \ldots, g^{p+q})\big)
    \\
    = \  &
  \phi\pig(g^{1}, \ldots, g^{j-2},
  g^{j-1}_{(1)}
  \psi\big(Sg^{j-1}_{(2)} \mancino (g^{j-1}_{(3)}, g^j, \ldots, g^{j+q-2}, g^{j+q-1}_{(1)})\big),
g^{j+q-1}_{(2)},
  g^{j+q}, \ldots, g^{p+q-2}\pig) 
\\
    = \  &
  \phi\pig(g^{1}, \ldots, g^{j-2},
  g^{j-1}_{(1)}
  \psi\big(1, Sg^{j-1}_{(2)} \mancino (g^j, \ldots, g^{j+q-2}, g^{j+q-1}_{(1)})\big),
g^{j+q-1}_{(2)},
  g^{j+q}, \ldots, g^{p+q-2}\pig) 
   \\
   = \  &
  \phi\pig(g^{1}, \ldots, g^{j-2},
  g^{j-1}_{(1)}
  \big(\gs_0 \psi\big)\big(Sg^{j-1}_{(2)} \mancino (g^{j}, \ldots, g^{j+q-2}, g^{j+q-1}_{(1)})\big),
g^{j+q-1}_{(2)},
  g^{j+q}, \ldots, g^{p+q-2}\pig) 
   \\
    = \  &
  \big(\phi \circ_j (\gs_0 \psi)\big)\big(g^1, \ldots, g^{p+q-2}\big).
    \end{split}
\end{equation*}
\end{small}Similar computations can be made for $j =1$, which proves the middle line in \eqref{besser3}, the explicit check of the other two again being omitted.
Let us conclude by verifying \eqref{unschoen}, that is, that $\hat\cM$ is a cyclic operad with respect to $\tau = t^{-1}$ as in \eqref{inverset}.
One directly sees 
for $2 \leq j \leq p$ and $\phi \in \hat\cM(p)$, $\psi \in \hat\cM(q)$ that
\begin{small}
\begin{equation*}
  \begin{split}
& \big(t^{-1} (\phi \circ_j \psi)\big)(g^1, \ldots, g^{p+q-1})
    \\
    =
\ \ &
    (\phi \circ_j \psi)\big(Sg^{p+q-1} \mancino (1, g^1, \ldots, g^{p+q-2})\big)
  \\
  = \ \  &
  \phi\pig(Sg^{p+q-1}_{(4)} \mancino (1, g^1, \ldots, g^{j-3}, g^{j-2}_{(1)}) \,
\psi\big(S(Sg^{p+q-1}_{(3)} g^{j-2}_{(2)}) \mancino Sg^{p+q-1}_{(2)} \mancino (g^{j-1}, \ldots, g^{j+q-3}, g^{j+q-2}_{(1)})\big),
\\
& \qquad
    Sg^{p+q-1}_{(1)} \mancino (g^{j+q-2}_{(2)}, g^{j+q-1}, \ldots, g^{p+q-2})\pig)
\\
    = \ \  &
  \phi\pig(Sg^{p+q-1}_{(2)} \mancino (1, g^1, \ldots, g^{j-3}, g^{j-2}_{(1)}) \,
\psi\big(Sg^{j-2}_{(2)} \mancino (g^{j-1}, \ldots, g^{j+q-3}, g^{j+q-2}_{(1)})\big),
\\
& \qquad
    Sg^{p+q-1}_{(1)} \mancino (g^{j+q-2}_{(2)}, g^{j+q-1}, \ldots, g^{p+q-2})\pig)
    \\
  = \ \  &
  \phi\pig(Sg^{p+q-1}_{(2)} \mancino \pig(1, g^1, \ldots, g^{j-3}, g^{j-2}_{(1)} \,
\psi\big(Sg^{j-2}_{(2)} \mancino (g^{j-1}, \ldots, g^{j+q-3}, g^{j+q-2}_{(1)})\big),
g^{j+q-2}_{(2)}, g^{j+q-1}, \ldots, g^{p+q-2}\pig)\pig)
 \end{split}
  \end{equation*}
\begin{equation*}
  \begin{split}
     =
\ \ &
  (t^{-1} \phi)\pig(g^{1}, \ldots, g^{j-3}, g^{j-2}_{(1)}
  \psi\big(Sg^{j-2}_{(2)} \mancino (g^{j-1}, \ldots, g^{j+q-3}, g^{j+q-2}_{(1)})\big),
  g^{j+q-2}_{(2)},
  g^{j+q-1}, \ldots, g^{p+q-1}\pig) 
\\
=
\ \
& (t^{-1} \phi \circ_{j-1} \psi)(g^1, \ldots, g^{p+q-1}),
   \end{split}
\end{equation*}
\end{small}and likewise for the case $j=1$, which the first line in Eqs.~\eqref{unschoen}. This concludes the proof.  
  \end{proof}

Wrapping up, 
all conditions in Theorem \ref{main} are met for the pair $(\cO, \cM)$ defined in \eqref{jupp}, which therefore results in the following conclusion:

\begin{cor}
  \label{this}
  The cohomology groups $\Ext^\bull_H(k,k)$ for a Hopf algebra $H$ with involutive antipode constitute a BV algebra.
\end{cor}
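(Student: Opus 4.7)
The plan is to recognise Corollary \ref{this} as an immediate specialisation of the general machinery of Theorem \ref{main} (equivalently, Corollary \ref{alsoalsomain}) applied to the explicit pair $(\cO, \cM)$ introduced in \eqref{jupp}, and then to read off the assertion from the identification of the resulting cohomology with $\Ext^\bull_H(k,k)$. Since all the structural work has been packaged into Lemma \ref{ilfautdesjours} and Theorem \ref{main}, the proof itself should reduce to little more than an invocation; what remains is to make the dictionary between the two precise.

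First, I would observe that Lemma \ref{ilfautdesjours} already verifies every hypothesis required: the sequence $\cM(n) = \Hom(H^{\otimes n}, k)$ is a cyclic opposite module over the operad with multiplication $(\cO, \mu, e)$ with $\cO(p) = H^{\otimes p}$; its cyclic dual $\hat\cM$, endowed with the partial composition \eqref{verticalcoext}, is a cosimplicial and cocyclic-compatible nonsymmetric operad in $\kmod$ in the sense of Definition \ref{compitompi}; and the induced cochain complex $(\hat\cM^\bull, \gd)$ with coboundary \eqref{stakker} computes $\Ext^\bull_H(k,k)$.

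Next, I would apply Theorem \ref{main} to $\hat\cM$: the cup product \eqref{posaunehierposaunedort}, the Gerstenhaber bracket built as the graded commutator of the braces assembled from \eqref{verticalcoext}, and the cyclic boundary $B$ from \eqref{humanoid1} associated with the cyclic operator $t$ of \eqref{cyconcoext}, satisfy the homotopy identity \eqref{nunjanaja2} together with the graded commutativity up to homotopy \eqref{anachroniste}. Passing to cohomology, as in the proof of Corollary \ref{barbarano}, all summands involving $\gd$ vanish and the remaining relation collapses to the standard BV identity \eqref{bvbv}. Combining this with the identification $H^\bull(\hat\cM) \cong \Ext^\bull_H(k,k)$ from Lemma \ref{ilfautdesjours} concludes the argument.

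The main obstacle in this chain of reasoning is not at the level of Corollary \ref{this}, where the deduction from the preceding results is entirely formal, but rather inside Lemma \ref{ilfautdesjours}: there one has to verify the compatibility relations \eqref{besser1}--\eqref{unschoen} for the explicit composition \eqref{verticalcoext}, a verification which relies in an essential way on the involutivity $S^2 = \id$ of the antipode---exactly the hypothesis explaining why a general Hopf algebra need not produce a BV structure on $\Ext^\bull_H(k,k)$ by this route.
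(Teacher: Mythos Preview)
Your proposal is correct and follows precisely the paper's own approach: the corollary is stated without a separate proof, with the sentence immediately preceding it already noting that all hypotheses of Theorem~\ref{main} are met for the pair $(\cO,\cM)$ of \eqref{jupp} thanks to Lemma~\ref{ilfautdesjours}. Your write-up simply makes this deduction explicit (and correctly locates the role of the involutivity hypothesis inside Lemma~\ref{ilfautdesjours}).
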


See, however, the comments made in Remark \ref{final}.

\subsection{$\bfExt$ acting on $\bfTor$ with $\bfCotor$ as a resulting BV algebra}
The preceding subsection can be (linearly) dualised to the case in which the chain complex computing $\Tor$-groups can be seen as a cyclic opposite module over the cochain complex computing $\Ext$-groups, considered as an operad with multiplication. Then, in an essentially analogous way, one can understand how the cyclic dual, being $\Cotor$ in this case as the derived functor of the cotensor product, turns into a BV algebra. This time, 
set
\begin{equation}
  \label{jupp2}
\cO(p) := \Hom(H^{\otimes p}, k),  \qquad \cM(n) := H^{\otimes n},
\end{equation}
where in degree zero $\cO(0) := \cM(0) := k$ as before.
Again, we do not need the explicit composition maps on $\cO$ but only the fact that it is an operad with multiplication, with in this case $(\mu, \mathbb{1}, e) = \big(\gve \circ \mu, \gve, \id_k)$. Specialising Theorem 6.2 in \cite{Kow:GABVSOMOO} to the Hopf algebra case and trivial coefficients, we obtain:
\begin{prop}
For all $1 \leq i \leq n-p+1$ and $0 \leq p \leq n$,
the operations
\begin{equation*}
    \bullet_i \colon \cO(p) \otimes \cM(n) \to \cM(n-p+1), 
\end{equation*}
given by,
for any $\phi \in \cO(p)$ 
and $(h^1, \ldots, h^n) \in \cM(n)$,
\begin{equation*}
  \begin{split}
  & \phi \bullet_i (h^1, \ldots, h^{n})
    := \big(h^1, \ldots, h^{i-1}, \phi(h^i_{(1)}, \ldots, h^{i+p-1}_{(1)})h^i_{(2)} \cdots h^{i+p-1}_{(2)}, h^{i+p}, \ldots, h^{n}\big),
  \end{split}
  \end{equation*}
declared to be zero if $p > n$, along with
\begin{equation*}
1_k \bullet_i (h^1, \ldots, h^{n}) := (h^1 , \ldots,  h^{i-1}, 1_H, h^{i}, \ldots, h^{n})
\end{equation*}
for elements in $\cO(0) = k$ and $1 \leq i \leq n+1$, induce on $\cM$ the structure of a unital opposite $\cO$-module.  
Moreover, defining the extra operation
\begin{equation*}
  \begin{split}
\phi \bullet_0 (h^1, \ldots, h^n) 
    &=
\pig(\phi\big(h^1_{(1)}, \ldots, h^{p-1}_{(1)}, S(h^1_{(2)} \cdots h^{p-1}_{(1)}) \big) h^p, \ldots, h^n\pig),
  \end{split}
  \end{equation*}
declared to be zero if $p > n + 1$, along with the cyclic operator 
\begin{equation}
  \label{cycontor}
  \begin{split}
    t(h^1, \ldots, h^n) &=
\big(S(h^1_{(2)} \cdots h^{n-1}_{(2)}h^n), h^1_{(1)}, \ldots, h^{n-1}_{(1)} \big) 
  \end{split}
  \end{equation}
turns the opposite $\cO$-module $\cM$ into a cyclic one in the sense of Definition \ref{molck}.
\end{prop}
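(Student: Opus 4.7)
The plan is to verify each of the axioms in Definition \ref{molck} by unravelling the formulas and using the Hopf algebra axioms; the involutivity of $S$ will be crucial only for the cyclicity condition. Throughout, I write $h^{[i,j]} := h^i \cdots h^j$ as a convenient shorthand for products.

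First I would check the plain associativity \eqref{SchlesischeStr} and unitality \eqref{SchlesischeStr-1} for the family $\{\bullet_i\}_{i \geq 1}$. The structural idea is that $\phi \bullet_i x$ evaluates $\phi$ on the first Sweedler components of $p$ consecutive slots and leaves the product of second components in place; when two such actions are applied, one splits these second components further via coassociativity. In the three ranges of $(i,j)$ listed in \eqref{SchlesischeStr} the second action either acts on a slot entirely to the left, entirely to the right, or inside the single merged slot created by the first action; in the middle case one recognises the operadic partial composition $\phi \circ_{j-i+1} \psi$ of $\cO$ as described in \cite{GerSch:ABQGAAD}. The unit case $\mathbb{1} = \gve$ follows directly from $\gve(h_{(1)}) h_{(2)} = h$. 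This part is routine once the bookkeeping of Sweedler indices is set up.

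Next I would address the extra map $\bullet_0$ and verify that \eqref{SchlesischeStr} extends to $i = 0$ or $j = 0$. The key identity here is the telescoping
\[
S(h^1_{(2)} \cdots h^{p-1}_{(2)}) \cdot h^1_{(1)} \cdots h^{p-1}_{(1)} \cdot (-) = \gve(h^1)\cdots \gve(h^{p-1}) \cdot (-)
\]
(applied in appropriate slots), which follows from the antipode axiom applied iteratively together with coassociativity; this is what rewrites nested antipode factors produced by iterating $\bullet_0$ back into a single application of $\phi \circ_{j-i+1} \psi$ from $\cO$. Again, the computations are straightforward but require careful tracking of how the antipodes pile up when two $\bullet_0$-type actions overlap.

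The last and genuinely delicate point is the cyclic compatibility. The relation $t(\phi \bullet_i x) = \phi \bullet_{i+1} t(x)$ for $0 \leq i \leq n-p$ is proved by a direct Sweedler computation, where on the left the argument of $S$ is $(\phi\text{-slot})_{(2)} h^{[i+p,n-1]}_{(2)} h^n$; by the standard identity $S(ab)= S(b)S(a)$ and the fact that $\phi(\ldots)$ is a scalar commuting past $S$, one rearranges this into exactly what $\bullet_{i+1}$ applied to $t(x)$ produces. The most delicate verification is $t^{n+1} = \id$ in degree $n$; here one must track the recursive shape of $t^k$, observing that after $k$ iterations a total product of all the trailing second Sweedler factors accumulates inside $S$, which, after $n+1$ iterations, collapses to $S(h^1_{(2)} \cdots h^n_{(2)}) \cdot h^1_{(3)} \cdots h^n_{(3)}$ times the remaining first components; invoking the antipode axiom $n$ times together with $S^2 = \id$ then collapses this back to $(h^1,\ldots,h^n)$. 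This telescoping antipode computation is the main technical obstacle, and it is precisely where the involutivity of $S$ is needed to match the sign/direction of the antipode occurring in the formula \eqref{cycontor}. Once this is in hand, the axioms of a cyclic opposite module are all verified.
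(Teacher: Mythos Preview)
Your sketch is correct in outline, and the Sweedler-calculus verifications you describe are exactly what one has to do if proving this from scratch; in particular you correctly locate the only nontrivial point, namely $t^{n+1}=\id$, and correctly attribute it to the telescoping antipode identity together with $S^2=\id$. One small remark: your citation of \cite{GerSch:ABQGAAD} for the operadic composition on $\cO$ is the reference the paper uses for $\cO(p)=H^{\otimes p}$ in the previous example, not for $\cO(p)=\Hom(H^{\otimes p},k)$ here; the relevant operad structure in this case is the one underlying the complex for $\Ext_H(k,k)$, so you should be explicit about which partial compositions you mean when invoking the middle line of \eqref{SchlesischeStr}.

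The paper takes a different route: it does not verify the axioms at all but simply invokes \cite[Thm.~6.2]{Kow:GABVSOMOO}, which establishes the cyclic opposite module structure in the greater generality of Hopf algebroids with arbitrary coefficients, and then specialises to a Hopf algebra with trivial coefficients $k$. Your approach has the advantage of being self-contained and of making transparent exactly where $S^2=\id$ enters, at the cost of the tedious index-tracking you allude to; the paper's approach is cleaner but defers all the work to the cited reference. Both are valid, and yours is arguably more informative for a reader who has not seen the Hopf algebroid machinery.
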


In particular, $\cM$ becomes a $k$-simplicial module, and again a short computation reveals the faces and degeneracies: 
\begin{equation}
  \label{simpontor}
\begin{array}{rcccll}
d_0(h^1, \ldots, h^n) \!\!\!&=\!\!\!\!&  \mu \bullet_0(h^1, \ldots, h^n)
  \!\!\!\!&=\!\!\!& \big(\gve(h^1) h^2, \ldots, h^n\big), &
  \\[1mm]
d_i(h^1, \ldots, h^n) \!\!\!&=\!\!\!\!&
\mu \bullet_i (h^1, \ldots, h^n)
\!\!\!\!&=\!\!\!& (h^1, \ldots, h^ih^{i+1}, \ldots, h^n), &
    \\[1mm]
d_{n}(h^1, \ldots, h^n) \!\!\!&=\!\!\!\!&  \mu \bullet_0 t (h^1, \ldots, h^n)
    \!\!\!\!&=\!\!\!& \big(h^1, \ldots, h^{n-1}\gve(h^n)\big), &
\\[1mm]
s_j (h^1, \ldots, h^n) \!\!\!&=\!\!\!\!&
e \bullet_{j+1} (h^1, \ldots, h^n)
\!\!\!\!&=\!\!\!&
(h^1, \ldots, h^j, 1_H, h^{j+1}, \ldots, h^n), &
\end{array}
\end{equation}
for $1 \leq i \leq n-1$ and $0 \leq j \leq n$, which permits to compute the extra degeneracy:
\begin{equation}
  \label{warmwarmwarm}
  \begin{split}
s_{-1}(h^1, \ldots, h^n) = e \bullet_{0} (h^1, \ldots, h^n)
  &=
  t \big(e \bullet_{n+1} (h^1, \ldots, h^n)\big)
\\
  &=
\big(S(h^1_{(2)} \cdots h^{n}_{(2)}), h^1_{(1)}, \ldots, h^{n}_{(1)} \big). 
\end{split}
  \end{equation}
The cyclic dual $\hat\cM$, considered as a cochain complex by using the prescriptions of Eqs.~\eqref{moulesfrites1}, is hence induced by the following cofaces and codegeneracies:
\begin{equation}
  \label{cosimpontor}
\begin{array}{rcll}
  \gd_0 (h^1, \ldots, h^n)   &=&
  \big(S(h^1_{(2)} \cdots h^{n}_{(2)}), h^1_{(1)}, \ldots, h^{n}_{(1)} \big), 
  \\[1mm]
 \gd_i(h^1, \ldots, h^n) &=&
(h^1, \ldots, h^{i-1}, 1_H, h^{i}, \ldots, h^n), 
  & \quad 1 \leq i \leq n +1,
   \\[1mm]
\gs_0 (h^1, \ldots, h^n) &=&  \big(\gve(h^1)h^2, \ldots, h^n\big), &
\\[1mm]
\gs_j (h^1, \ldots, h^n) &=&  (h^1, \ldots, h^j h^{j+1}, \ldots, h^n),
& \quad 1 \leq j \leq n-1, 
\end{array}
\end{equation}
which describes a cocyclic $k$-module when adding the inverse of \eqref{cycontor} to it, 
\begin{equation}
\label{inversettor}
t^{-1}(h^1, \ldots, h^n) =
\big(h^2_{(1)}, \ldots, h^{n}_{(1)},
S(h^1 h^2_{(2)} \cdots h^{n}_{(2)})\big) 
\end{equation}
in degree $n$.
All that is missing is now the cosimplicial and cocyclic-compatible operadic composition on $\hat\cM$:

\begin{lem}
  The cyclic dual $\hat\cM$
of $\cM = \{ \cM(n)\}_{n \geq 0}$, where $\cM(n) = H^{\otimes n}$, with respect to the cyclic $k$-module structure defined by the Eqs.~\eqref{cycontor} and \eqref{simpontor}, yields a cochain complex that computes $\Cotor^\bull_H(k,k)$. Defining on $\hat\cM$ the partial operadic composition 
  \begin{small}
  \begin{equation}
    \label{verticaltor}
    x \circ_j y =
    \begin{cases}
(g^1, \ldots, g^{q-1}, g^qh^1, h^2, \ldots, h^p)       
      & \mbox{if} \ j = 1,
\\
\begin{array}{r}
  \hspace*{-.2cm}
  \big(h^1, \ldots, h^{j-1} S(g^1_{(2)} \cdots g^q_{(2)}), 
g^1_{(1)}, \ldots, g^q_{(1)}h^j,
 \ldots, h^p \big)
\end{array}
& \mbox{if} \  2 \leq j \leq p,
\end{cases}
  \end{equation}
  \end{small}for $x := (h^1, \ldots, h^p) \in \hat\cM(p)$ and $y := (g^1, \ldots, g^q) \in \hat\cM(q)$,
  one obtains on $\hat\cM$ the structure of a cosimplicial and cocyclic-compatible operad in $\kmod$.
  \end{lem}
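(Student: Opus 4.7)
The plan is to follow the strategy used in the proof of Lemma \ref{ilfautdesjours}, essentially dualising all the computations from the $(\Cotor, \Coext)$ setting to the present $(\Ext, \Tor)$ one. For the first claim, that the cochain complex induced by \eqref{cosimpontor} computes $\Cotor^\bull_H(k,k)$, one specialises the derived cotensor framework of \cite[\S3.2]{Kow:ANCCOTCDOE} to a Hopf algebra with trivial coefficients and identifies the result with the cocyclic module from \eqref{cosimpontor} via a higher-order Hopf-Galois (translation) map isomorphism, exactly mirroring what was sketched there for the $\Ext$ side. The verification of the operad axioms \eqref{danton} for \eqref{verticaltor} is then a direct three-case check using only coassociativity of $\gD$, the anti-coalgebra morphism property of $S$, and the antipode identity $\mu(S \otimes \id)\gD = \eta\gve$; the bookkeeping is tedious but routine, particularly when the two $\circ$'s each insert their own $S$-prefactor built from Sweedler legs, and one needs to match these up via $S^2 = \id$.

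For the cosimplicial compatibility \eqref{besser1}--\eqref{besser3}, it suffices to verify a representative sample of cases, the remaining ones being entirely analogous. For instance, to check the middle line of \eqref{besser3}, the multiplication $h^k h^{k+1}$ introduced by $\gs_i$ on the left falls precisely inside the $g^a_{(1)}$-segment produced by $\circ_j$, giving the identity at once. The border cases $i = j-1$ and $i = j+q-1$ each consume one factor of the $S(g^1_{(2)} \cdots g^q_{(2)})$ prefactor in \eqref{verticaltor} via a single application of the antipode axiom. The verifications of \eqref{besser1} and \eqref{besser2} follow the same pattern, with careful tracking of how the inserted $1_H$ (for \eqref{besser1}) or the comultiplication on an existing entry (for \eqref{besser2}) interact with that $S$-prefactor; the cases where the inserted element meets the boundary of the $y$-segment are again resolved by one use of $\mu(S \otimes \id)\gD = \eta\gve$.

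The main obstacle lies in the cocyclic compatibility \eqref{unschoen} with $\tau = t^{-1}$ of \eqref{inversettor}, and specifically in the $j=p$ case of its rewriting \eqref{unschoen2}, namely $t(x \circ_p y) = ty \circ_1 tx$. Here one must swap the roles of $x$ and $y$ up to an intricate antipode factor: expanding the left-hand side produces the cyclic shift of a string beginning with $S$ of the full product $g^1_{(2)} \cdots g^q_{(2)} h^p_{(2)}$, while the right-hand side assembles two separately-shifted strings glued at position $1$. Matching them requires repeated coassociativity combined with $\gD S = (S \otimes S)\gD^{\op}$ and $S^2 = \id$ to unify the two $S$-values into a single expression. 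The companion case $2 \leq j \leq p$, by contrast, is only a straightforward reindexing on Sweedler legs with no nontrivial antipode manipulation. Once all these compatibilities are established, every hypothesis of Definition \ref{compitompi} is met, so both assertions of the lemma follow, and in particular Theorem \ref{main} applies to yield a BV algebra structure on $\Cotor^\bull_H(k,k)$.
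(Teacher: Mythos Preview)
Your approach is essentially the same as the paper's: both argue that the $\Cotor$ identification follows from \cite[\S3.2]{Kow:ANCCOTCDOE} via a Hopf--Galois isomorphism, both declare the operad axioms routine, and both verify a representative sample of the compatibility conditions \eqref{besser1}--\eqref{besser3} and \eqref{unschoen} by direct Sweedler computation using the antipode identities and $S^2=\id$. The only cosmetic differences are that the paper checks the flip case of \eqref{unschoen} directly for $\tau=t^{-1}$ at $j=1$ rather than for $t$ at $j=p$ via \eqref{unschoen2} as you do (these are equivalent), and that your description of \eqref{besser2} as involving ``comultiplication on an existing entry'' is a slip---both \eqref{besser1} and \eqref{besser2} concern the cofaces $\gd_i$ of \eqref{cosimpontor}, which insert $1_H$ (or apply the $S$-construction for $i=0$), not a coproduct; this does not affect your argument.
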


\begin{proof}
Quite analogous to the proof of Lemma \ref{ilfautdesjours}, 
restricting the content of \cite[\S3.2]{Kow:ANCCOTCDOE} to Hopf algebras and trivial coefficients, again using a cochain isomorphism obtained from a higher order Hopf-Galois map, results in the fact that the cochain complex obtained from Eqs.~\eqref{cosimpontor} computes $\Cotor^\bull_H(k,k)$.
It also follows by routine computations that the partial compositions \eqref{verticaltor} define an operadic structure
  on $\hat\cM$. Again, that the so-defined operad is 
  cosimplicial-compatible with respect to the cosimplicial operators in Eqs.~\eqref{cosimpontor},
will be carried out in a few cases only. 
As an example, let us check the third line in \eqref{besser2}: to this end, let
$i = j = p+1$. One has
\begin{equation*}
  \begin{split}
& (\gd_{p+1} x) \circ_{p+1} y
    \\
    = \  &
(h^1, \ldots, h^p, 1) \circ_{p+1} (g^1, \ldots, g^q)
    \\
    = \  &
 \big(h^1, \ldots, h^{p-1}, h^{p} S(g^1_{(2)} \cdots g^q_{(2)}), 
 g^1_{(1)}, \ldots, g^q_{(1)}\big)
    \\
    = \  &
 \big(S(g^1_{(2)} \cdots g^q_{(2)}), 
 g^1_{(1)}, \ldots, g^q_{(1)}\big) \circ_1 (h^1, \ldots, h^{p})
   \\
    = \  &
(\gd_0 y) \circ_1 x,
       \end{split}
\end{equation*}
and similarly for all other relations concerning the cofaces $\gd_i$. 
Let us, nevertheless, still explicitly prove one of the relations in 
\eqref{besser3} to exemplify the behaviour of the codegeneracies. In case $1 \leq j \leq p$ and $j-1 < i \leq j+q - 2$, one computes
\begin{equation*}
  \begin{split}
& \gs_i (x \circ_j y)
    \\
    = \  &
\gs_i\big(h^1, \ldots, h^{j-1} S(g^1_{(2)} \cdots g^q_{(2)}), 
g^1_{(1)}, \ldots, g^q_{(1)}h^j,
 \ldots, h^p \big)
    \\
    = \  &
\big(h^1, \ldots, h^{j-1} S(g^1_{(2)} \cdots g^q_{(2)}), 
g^1_{(1)}, \ldots, g^{i-j+1}_{(1)}g^{i-j+2}_{(1)}, \ldots,  g^q_{(1)}h^j,
 \ldots, h^p \big)
    \\
    = \  &
\big(h^1, \ldots, h^{j-1} S(g^1_{(2)} \cdots g^q_{(2)}), 
g^1_{(1)}, \ldots, g^{i-j+1}_{(1)}g^{i-j+2}_{(1)}, \ldots,  g^q_{(1)}h^j,
 \ldots, h^p \big)
    \\
    = \  &
(h^1, \ldots, h^p) \circ_j  (g^1, \ldots, g^{i-j+1} g^{i-j+2}, \ldots,  g^q)
    \\
    = \  &
x \circ_j  (\gs_{i-j+1} y), 
       \end{split}
\end{equation*}
and similarly for the case $i = j+q-2$, 
the only interesting instance being of what happens at the border $i=j-1$. Using $S^2 = \id$, one has:
\begin{equation*}
  \begin{split}
& \gs_{j-1} (x \circ_j y)
    \\
    = \  &
\gs_{j-1}\big(h^1, \ldots, h^{j-1} S(g^1_{(2)} \cdots g^q_{(2)}), 
g^1_{(1)}, \ldots, g^q_{(1)}h^j,
 \ldots, h^p \big)
    \\
    = \  &
\big(h^1, \ldots, h^{j-1} S(g^1_{(2)} \cdots g^q_{(2)})g^1_{(1)}, g^2_{(1)},\ldots,  g^q_{(1)}h^j,
 \ldots, h^p \big)
    \\
    = \  &
\big(h^1, \ldots, h^{j-1}  S(g^2_{(2)} \cdots g^q_{(2)}), \gve(g^1) g^2_{(1)}, \ldots,  g^q_{(1)}h^j,
 \ldots, h^p \big)
    \\
    = \  &
(h^1, \ldots, h^p) \circ_j  \big(\gve(g^1) g^2, \ldots,  g^q\big)
    \\
    = \  &
x \circ_j  (\gs_0 y), 
       \end{split}
\end{equation*}
which finishes the proof of the middle line in relations \eqref{besser3}.
Let us conclude by checking the cyclic operad condition \eqref{unschoen} with respect to $\tau = t^{-1}$ given in \eqref{inversettor}.
For $x, y \in \hat\cM$ as above, one computes
\begin{equation*}
  \begin{split}
& t^{-1} (x \circ_1 y)
    \\
    =
\ \ &
t^{-1}\big((h^1, \ldots, h^p) \circ_1  (g^1, \ldots,  g^q)\big)
  \\
  = \ \  &
t^{-1}(g^1, \ldots,  g^q h^1, \ldots, h^p)
\\
  = \ \  &
\big(g^2_{(1)}, \ldots, g^{q-1}_{(1)}, g^{q}_{(1)}h^1_{(1)}, h^2_{(1)}, \ldots, h^p_{(1)}, 
S(g^1 g^2_{(2)} \cdots g^{q}_{(2)}h^1_{(2)} \cdots  h^p_{(2)})\big) 
    \\
  = \ \  &
\big(g^2_{(1)}, \ldots, g^{q-1}_{(1)}, g^{q}_{(1)}h^1_{(1)}, h^2_{(1)}, \ldots, h^p_{(1)}, 
S(h^1_{(2)} \cdots  h^p_{(2)}) S(g^1 g^2_{(2)} \cdots g^{q}_{(2)})\big) 
    \\
      = \ \  &
    \pig(g^2_{(1)}, \ldots, g^{q-1}_{(1)}, g^{q}_{(1)}S\big(h^2_{(2)} \cdots h^p_{(2)}
    S(h^1 h^2_{(3)} \cdots h^p_{(3)} )_{(2)}\big), h^2_{(1)}, \ldots, h^p_{(1)}, 
\\
    &
  \qquad \qquad \qquad \qquad     \qquad \qquad \qquad \qquad \qquad   
    S(h^1 h^2_{(3)} \cdots h^p_{(3)} )_{(1)} S(g^1 g^2_{(2)} \cdots g^{q}_{(2)})\pig) 
    \\
      = \ \  &
      \big(g^2_{(1)}, \ldots, g^{q}_{(1)}, S(g^1 g^2_{(2)} \cdots g^{q}_{(2)})\big)
      \circ_q \big(h^2_{(1)}, \ldots, h^p_{(1)}, S(h^1 h^2_{(2)} \cdots h^p_{(2)} ) \big) 
    \\
=
\ \
& t^{-1} y \circ_q t^{-1} x,
   \end{split}
\end{equation*}
and likewise for the cases $2 \leq j \leq q$. Hence, Eqs.~\eqref{unschoen} are true, which concludes the proof.  
\end{proof}

As in the preceding example, 
again all conditions in Theorem \ref{main} are met for the pair $(\cO, \cM)$ defined in \eqref{jupp2}, and therefore we can state:

\begin{cor}
  \label{that}
  The cohomology groups $\Cotor^\bull_H(k,k)$ for a Hopf algebra $H$ with involutive antipode constitute a BV algebra.
\end{cor}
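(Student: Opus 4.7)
The plan is to simply invoke the general machinery developed in Theorem \ref{main} and its immediate Corollary \ref{barbarano}, applied to the specific pair $(\cO,\cM)$ in \eqref{jupp2}. All the required verifications have in fact already been performed in the preceding lemma: first, $(\cO,\mu,e)$ with $(\mu,\mathbb{1},e)=(\gve\circ\mu,\gve,\id_k)$ is an operad with multiplication; second, $(\cM,t)$ with the cyclic operator \eqref{cycontor} is a cyclic opposite $\cO$-module; third, the preceding lemma identifies the cochain complex underlying $\hat\cM$, built from the cofaces and codegeneracies \eqref{cosimpontor}, with the standard complex computing $\Cotor^\bull_H(k,k)$; and fourth, the same lemma equips $\hat\cM$ with the partial operadic composition \eqref{verticaltor} and shows that it is cosimplicial and cocyclic-compatible with respect to $\tau=t^{-1}$ of \eqref{inversettor} in the sense of Definition \ref{compitompi}.

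Consequently, Theorem \ref{main} applies verbatim: on the normalised complex of $\hat\cM$ one obtains the homotopy formula \eqref{nunjanaja2} expressing the Gerstenhaber bracket as a commutator of $B$ with the cup product $x\cup y=(e\bullet_0 y)\circ_1 x$ up to the homotopy terms involving $\gd$, $S_xy$, and the graded cup commutator \eqref{anachroniste}. Passing to cohomology exactly as in the proof of Corollary \ref{barbarano}, all $\gd$-exact contributions vanish and \eqref{nunjanaja2} collapses to \eqref{wirklichsimpel1}; using the graded commutativity of $\cup$ inherited from \eqref{anachroniste}, the latter reduces further to the standard BV identity \eqref{bvbv}. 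This is precisely the required BV structure on $H^\bull(\hat\cM)\simeq \Cotor^\bull_H(k,k)$.

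There is no genuine obstacle remaining: the statement is a corollary of Theorem \ref{main} once the preceding lemma is in place, completely parallel to how Corollary \ref{this} was derived in the previous subsection. For the reader's convenience I would, however, spell out the explicit form of the generating data on Hopf cochains, namely the cup product
\[
(h^1,\ldots,h^p)\cup(g^1,\ldots,g^q)=\bigl(S(g^1_{(2)}\cdots g^q_{(2)}),\,g^1_{(1)},\ldots,g^q_{(1)}\bigr)\circ_1(h^1,\ldots,h^p),
\]
obtained by unwinding \eqref{posaunehierposaunedort} with the operation $e\bullet_0-$ specialised to \eqref{warmwarmwarm}, and the Connes-Rinehart-Tsygan operator $B$ obtained from \eqref{humanoid1} with $\mu\bullet_0-$ given by $d_0$ of \eqref{simpontor} and the cyclic operator $t$ of \eqref{cycontor}. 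These translations are purely notational and require no new argument, so the proof reduces to the single line: apply Theorem \ref{main} and Corollary \ref{barbarano} to the pair \eqref{jupp2}.
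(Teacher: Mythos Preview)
Your proposal is correct and follows exactly the paper's approach: the corollary is stated immediately after the sentence ``again all conditions in Theorem \ref{main} are met for the pair $(\cO, \cM)$ defined in \eqref{jupp2}'', with no further proof given, so the argument is precisely to invoke Theorem \ref{main} (and hence Corollary \ref{barbarano}) once the preceding lemma has verified the cosimplicial and cocyclic compatibility of the operad structure \eqref{verticaltor} on $\hat\cM$. Your additional unwinding of the explicit cup product and $B$-operator is a helpful elaboration but, as you say, purely notational and not required for the proof.
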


\begin{rem}
\label{final}
  The existence of BV structures as stated in Corollaries \ref{this} \& \ref{that} has been observed before:
alternatively, the BV structure on $\Cotor$-groups can be obtained by defining the structure of a cyclic operad {\em with multiplication} on the relevant cochain complex. This has been dealt with first in \cite{Men:BVAACCOHA}, and can be constructed in an essentially analogous way for $\Ext$-groups, see \cite{Kow:WEIABVA}. 

As hinted at at the beginning of this example section, the two preceding subsections
can be generalised not only to Hopf algebroids but also more general coefficients than the ground ring $k$ can be considered, see \cite{Kow:ANCCOTCDOE} for a detailed account.
The Gerstenhaber algebra structure, {\em e.g.}, on $\Ext$-groups essentially asks for (braided commutative resp.\ cocommutative) monoids resp.\ comonoids in the monoidal centre of left $H$-modules as coefficients, see \cite{FioKow:BAPFCIEC} for quite a general approach in extension categories in the spirit of Schwede \cite{Schw:AESIOTLBIHC}, whereas for $\Cotor$-groups one deals, in a similar way, with the monoidal category of left $H$-comodules.
A cyclic operator $\tau$ on the complexes computing $\Ext$ (resp.~$\Cotor$) with values in more general coefficients, however, requires these to be objects not in a monoidal centre but rather in a {\em bimodule category centre}. This is to say, a centre construction when considering the categories of left $H$-modules resp.\ $H$-comodules as bimodule categories over its respective opposite by means of a left and right (sort of) adjoint action induced by the respective internal left and right homs: see \cite{Kow:CTACC} for an approach in the framework of Hopf algebroids. How these two properties fit together to produce BV structures on cohomology with general coefficients, {\em i.e.}, objects being simultaneously {\em monoidally central} and {\em bimodule central} is an open question currently under examination. 
  \end{rem}

\providecommand{\bysame}{\leavevmode\hbox to3em{\hrulefill}\thinspace}
\providecommand{\MR}{\relax\ifhmode\unskip\space\fi M`R }
\providecommand{\MRhref}[2]{%
  \href{http://www.ams.org/mathscinet-getitem?mr=#1}{#2}}
\providecommand{\href}[2]{#2}

\end{document}